\newtheorem{thm}{Theorem}[section]
\newtheorem{prop}[thm]{Proposition}
\newtheorem{lem}[thm]{Lemma}
\newtheorem{cor}[thm]{Corollary}
\theoremstyle{definition}
\newtheorem{definition}[thm]{Definition}
\newtheorem{example}[thm]{Example}
\theoremstyle{remark}
\newtheorem{remark}[thm]{Remark}
\numberwithin{equation}{section}
\newcommand{\bQ}{\mathbb{Q}}
\newcommand{\bP}{\mathbb{P}}
\newcommand\OO{{\mathcal{O}}}
\newcommand{\bF}{\mathbb{F}}
\newcommand{\bC}{{\mathbb C}}
\newcommand\Alb{\text{\rm Alb}}
\newcommand\Pic{\text{\rm Pic}}
\newcommand\FF{{\mathcal{F}}}
\newcommand\QQ{{\mathcal{Q}}}
\newcommand\CC{{\mathbb{C}}}
\newcommand\ZZ{{\mathbb{Z}}}
\newcommand\III{{\mathcal{I}}}
\newcommand\TT{{\mathcal{T}}}
\newcommand\Num{{\rm{Num}}}
\newcommand\Bs{{\rm{Bs}}}
\newcommand\Image{{\rm{Image}}}
\begin{document}

\title{On irregular threefolds and fourfolds with numerically trivial canonical bundle}
\date{\today}
\author{Chen Jiang}
\address{Graduate School of Mathematical Sciences, the University of Tokyo,
3-8-1 Komaba, Meguro-ku, Tokyo 153-8914, Japan.}
\email{cjiang@ms.u-tokyo.ac.jp}

\thanks{The author was supported by Grant-in-Aid for JSPS Fellows (KAKENHI No. 25-6549) and Program for Leading Graduate  Schools, MEXT, Japan}

\begin{abstract}
We prove that for a smooth projective irregular $3$-fold $X$ with $K_X\equiv 0$ and a nef and big divisor $L$ on $X$,
$|mL+P|$ gives a birational map for all $m\geq 3$ and  all $P\in \Pic^0(X)$. We also use the same method to deal with $4$-folds, and prove that for a smooth projective irregular $4$-fold $X$ with $K_X\equiv 0$ and an ample divisor $L$ on $X$,
$|mL+P|$ gives a birational map for all $m\geq 5$ and all $P\in \Pic^0(X)$. These results are optimal.
\end{abstract} 

\keywords{irregular varieties, birationality, boundedness}
\subjclass[2000]{14C20, 14E30, 14J30, 14J35}
\maketitle
\pagestyle{myheadings} \markboth{\hfill  C. Jiang
\hfill}{\hfill On irregular $3$-folds and $4$-folds with $K\equiv 0$\hfill}

\section{Introduction}

Given an $n$-dimensional normal projective variety $X$ with $K_X\equiv 0$ and a  big and nef Weil divisor $L$ on $X$, we are interested in the geometry of the rational map $\phi_{|mL|}$ defined by the linear system $|mL|$. By definition, $\phi_{|mL|}$ is  birational onto its image when $m$ is sufficiently large.  Therefore it is interesting to find such a practical number $m(n)$, depending only on $\dim X$, which stably guarantees the birationality of $\phi_{|mL|}$. For $\dim X\leq 4$, we have the following known results:

\begin{thm}[{cf. \cite{Reider}, \cite{F}, \cite{O}, \cite{Jiang}}]\label{23}
Let $X$ be a smooth projective variety with $K_X\equiv 0$, $L$  a nef and big divisor, and $T$ a  divisor such that  $T\equiv 0$.  Then 
\begin{enumerate}
\item  If $\dim X\leq 2$, $|mL+T|$ gives a birational map for all $m\geq 3$;
\item  If $\dim X= 3$, $|mL+T|$ gives a birational map for all $m\geq 5$.
\item  If $\dim X= 4$, $|mL+T|$ gives a birational map for all $m\geq 10$.
\end{enumerate}
\end{thm}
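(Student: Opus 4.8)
The plan is to prove the three parts separately, in each case by the general principle of lifting global sections past a suitably singular auxiliary divisor, the size of the bound on $m$ reflecting how many multiples of $L$ are needed to build and then cut down a log canonical centre. Since $L$ is big it suffices in every case to show that $|mL+T|$ separates two general points $x_1,x_2$ of $X$, because a rational map that is injective on a dense open subset of a smooth variety in characteristic zero is birational onto its image. The reduction common to all cases is that $K_X\equiv0$ and $T\equiv0$ force $mL+T\equiv K_X+mL$, so every numerical vanishing statement for $K_X+(\text{nef and big})$ transfers verbatim to $mL+T$; in particular, for any effective $\bQ$-divisor $D$ with $D\sim_\bQ cL+(\text{numerically trivial})$ and $c<m$, Nadel vanishing gives $H^i\bigl(X,\OO_X(mL+T)\ot\mathcal J(D)\bigr)=0$ for $i>0$, since $mL+T-K_X-D\equiv(m-c)L$ is nef and big.

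For (1) with $\dim X=1$, $X$ is an elliptic curve and $\deg L\ge1$, so $\deg(mL+T)\ge m\ge3$ and $|mL+T|$ is very ample. For (1) with $\dim X=2$ one applies Reider's theorem \cite{Reider} to the nef divisor $mL+T-K_X$, which is numerically equivalent to $mL$: for $m\ge3$ one has $(mL)^2=m^2L^2\ge9$ and $(mL)\cdot C=m(L\cdot C)\ge3$ for every curve $C$ not contracted by $\phi_{|mL|}$, which gives base-point freeness and generic injectivity of $|mL+T|$; the only delicate case is $L^2=1$, $m=3$, settled by the classification of smooth surfaces with $K_X\equiv0$ (K3, abelian, bielliptic, Enriques) and a direct inspection of $|3L+T|$.

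For (2) and (3) I would run the Angehrn--Siu/Kawamata argument of cutting down log canonical centres. Given general $x_1,x_2$, a multiplicity count yields an effective $D_0\sim_\bQ cL$ with $c$ just above $\dim X$ for which $(X,D_0)$ fails to be klt at $x_1$ (using $L^{\dim X}\ge1$ and the bigness of $L$); tie-breaking produces a minimal log canonical centre $W\ni x_1$, and adding a further controlled multiple of $L$ either drops $\dim W$ by one or isolates $W=\{x_1\}$. Iterating this, carrying out the same construction at $x_2$, and then applying the vanishing above to restrict sections to the resulting isolated non-klt points, separates $x_1$ from $x_2$; summing the coefficients of $L$ consumed gives $m\ge5$ for threefolds, as in \cite{F}, and $m\ge10$ for fourfolds, as in \cite{O,Jiang}. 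The main obstacle is exactly this bookkeeping in dimension four: a positive-dimensional minimal log canonical centre is not in general cut down by a single unit of $L$, and pinning down the worst case — through subadjunction on $W$ and the volume estimates of \cite{Jiang} — is what forces the jump from the naive bound $5$ to the optimal $10$.
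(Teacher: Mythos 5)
The first thing to say is that the paper does not prove Theorem \ref{23} at all: it is quoted from the literature (Reider for $\dim X\leq 2$, Fukuda/Oguiso/Jiang for dimensions $3$ and $4$), so your attempt is measured against those references rather than against an argument in the text. Your part (1) is essentially the standard Reider argument and is fine in outline, with two small remarks: the ``delicate case'' $L^2=1$, $m=3$ is vacuous, since on a smooth surface with $K\equiv 0$ Riemann--Roch forces $L^2$ to be even, hence $L^2\geq 2$ and $(mL)^2\geq 18\geq 10$ (the paper records exactly this in Corollary \ref{Reider K0}), so no appeal to the classification is needed; and the relevant point when applying Reider at two \emph{general} points is that the obstructing curve $E$ must move in a covering family, so $E^2\geq 0$ and $L\cdot E\geq 1$ by bigness of $L$, whence $mL\cdot E\geq 3>2$, a contradiction --- your condition ``$(mL)\cdot C\geq 3$ for every curve $C$ not contracted'' is not quite the statement you need, though it is easily repaired.

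The genuine gap is in parts (2) and (3). The Angehrn--Siu/Kawamata lc-centre scheme you sketch does not, as described, produce the constants $5$ and $10$: the generic bookkeeping (an effective $\bQ$-divisor of multiplicity $>n$ at a point costs a coefficient slightly above $n/(L^n)^{1/n}$, and cutting a positive-dimensional minimal lc centre down to a point costs further multiples of $L$ controlled only by subadjunction) yields bounds of the shape $n(n+1)/2+O(1)$, which is worse than $5$ in dimension $3$ and does not visibly give $10$ in dimension $4$. The sharp constants are exactly the content of the cited papers, which use $K_X\equiv 0$ in an essential way (Riemann--Roch and nonvanishing for low multiples of $L$, case analysis of the possible lc centres and fibration structures), and your write-up defers to those papers precisely at the decisive step (``summing the coefficients of $L$ consumed gives $m\geq 5$ \dots as in \cite{F}, and $m\geq 10$ \dots as in \cite{O}, \cite{Jiang}''). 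The attribution is also off: \cite{Jiang} concerns minimal threefolds with numerically trivial canonical divisor, so its volume estimates cannot be what forces the fourfold bound $10$. As written, (2) and (3) are a plan plus a citation rather than a proof; since the paper itself only cites these results this is consistent with its treatment, but it is not an independent argument, and the specific mechanism you propose for how $5$ and $10$ arise would not survive carrying out the computation.
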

It is easy to see that the former two are optimal.
In this paper we study irregular threefolds and fourfolds (i.e. $q(X)=h^1(\OO_X)> 0$) with $K\equiv 0$. The technique on irregular varieties developed by J. A. Chen and Hacon inspired by Fourier--Mukai transform shows that the geometry of irregular varieties is very similar to that of general fibers of the Albanese map. In particular, it works quite successfully on pluricanonical maps of irregular varieties (cf. \cite{CH1, CH2, CCJ, JLT, JS, S}). So we may expect that there is a better result for irregular threefolds and fourfolds than Theorem \ref{23}(2)(3). 

The main aim of this paper is to prove the following theorem.
\begin{thm}\label{main}
Let $X$ be a smooth projective irregular $3$-fold  with $K_X\equiv 0$ and $L$ a nef and big divisor on $X$.  Then
$|mL+P|$ gives a birational map for all $m\geq 3$ and all $P\in \Pic^0(X)$.
\end{thm}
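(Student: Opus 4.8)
The plan is to reduce the birationality of $|mL+P|$ to a statement about the Albanese map and then exploit the Chen--Hacon machinery on generic vanishing. Let $a\colon X\to A=\Alb(X)$ be the Albanese morphism; since $q(X)>0$, $\dim a(X)\geq 1$. The key dichotomy is the dimension $d$ of the image $Y=a(X)$. First I would treat the easiest case $d=3$, i.e. $a$ is generically finite onto its image: here one wants to use that $L$ is nef and big together with the fact that $h^0(X,mL+P)$ computes correctly via generic vanishing and cohomology and base change over $\Pic^0(X)$, so that a general $P\in\Pic^0(X)$ already separates points lying over distinct points of $A$; the remaining issue is separating points in a fiber of $a$, which is $0$-dimensional, handled by Reider-type / base-point-freeness input as in Theorem~\ref{23}. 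The substantive cases are $d=1$ and $d=2$, where one restricts $mL+P$ to a general fiber $F$ of $a$ (a curve or a surface with $K_F$ torsion, since $K_X\equiv 0$ forces $K_F\equiv 0$ by adjunction and the fact that the normal bundle of a general Albanese fiber is trivial), applies the low-dimensional case of Theorem~\ref{23}(1) on $F$ together with $m\geq 3$, and then lifts birationality on fibers to birationality on $X$ using that $a\times\phi_{|mL+P|}$ separates fibers.

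The engine for the lifting step is the following standard package, which I would assemble: (i) for $P\in\Pic^0(X)$ general, $H^i(X,mL+P)=0$ for $i>0$ and the restriction $H^0(X,mL+P)\to H^0(F,(mL+P)|_F)$ is surjective — this follows from Kawamata--Viehweg vanishing on $X$ (as $mL+P-K_X\equiv mL$ is nef and big) and from the surjectivity of $a_*\OO_X\to \OO_A$-type arguments, or more robustly from the generic vanishing / Fourier--Mukai formalism of Chen--Hacon applied to $a_*(mL+P)$; (ii) hence $\phi_{|mL+P|}$ restricted to a general fiber $F$ agrees with $\phi_{|(mL+P)|_F|}$, which is birational by the surface/curve case since $(mL+P)|_F$ is nef and big on $F$ and $m\geq 3$; (iii) the Albanese map $a$ itself separates the general fibers from one another, and after twisting by a further general $P'$ one separates points on $A$ as well. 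Concatenating (ii) and (iii) gives that $\phi_{|mL+P|}$ is birational onto its image. I would organize this as: \emph{Step 1}, normalize via the Albanese and the \'etale cover trick (replace $X$ by a suitable cover so that $a$ has connected fibers and $Y$ is of general type in the relevant sense); \emph{Step 2}, the fiberwise birationality via Theorem~\ref{23}(1); \emph{Step 3}, the cohomological surjectivity for general $P$; \emph{Step 4}, separating fibers and concluding.

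The main obstacle I expect is \emph{Step 3} combined with the geometry when $d=2$: one must ensure that the restriction map $H^0(X,mL+P)\to H^0(F,(mL+P)|_F)$ is surjective for a \emph{general} $P$, not merely generic, and uniformly enough that varying $P$ still separates the base $A$; this is exactly where the Chen--Hacon theory of $GV$-sheaves and the decomposition of $a_*\OO_X$ (or of $a_*(L^{\otimes m})$) into pullbacks of ample sheaves twisted by torsion line bundles is needed, and the delicate point is controlling the locus in $\Pic^0(X)$ where the needed vanishing $H^1=0$ fails. A secondary difficulty is the classification of surface fibers $F$ with $K_F\equiv 0$ (abelian, bielliptic, K3, Enriques): for the non-simply-connected ones (bielliptic, Enriques) the \'etale cover trick must be compatible with the Albanese fibration structure, and one must check that $(mL+P)|_F$ remains big and nef after the cover, which it does since bigness and nefness are preserved under finite pullback. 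Once these are in place, the bound $m\geq 3$ is forced precisely by the surface case of Theorem~\ref{23}(1), and optimality follows from an abelian-variety example where $|2L+P|$ fails to be birational.
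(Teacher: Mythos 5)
Your reduction to the Albanese fibration and the fiberwise use of Theorem~\ref{23}(1) matches the paper's starting point (Corollary~\ref{m4} already gives $m\geq 4$ and reduces $m=3$ to separating two general points on a fiber $F$ where $|2L|_F|$ is not birational), but your Step 3 contains the decisive gap. Kawamata--Viehweg vanishing gives $H^i(X,mL+P)=0$ for $i>0$; it does \emph{not} give surjectivity of the restriction $H^0(X,mL+P)\to H^0(F,(mL+P)|_F)$. Over an elliptic curve base that surjectivity is exactly global generation of the vector bundle $a_*\OO_X(mL)\otimes P$ at the point $z=a(F)$, and an $IT^0$ (equivalently ample) bundle on an elliptic curve need not be globally generated (a degree-one line bundle already fails). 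The paper only obtains such global generation under the extra hypothesis $h^0(F,\OO_F(L))=1$, and even then it requires a genuine argument (Lemma~\ref{L-F nef}: writing $a_*\OO_X(L)=\OO_A(D)$, showing $L-a^*D$ is nef via the cone theorem, and invoking Pareschi's theorem). Note also that the Chen--Hacon machinery you cite does give birationality of $|3L+a^*P|$ for \emph{general} $P$ (Corollary~\ref{separate2}(1),(3), since $|3L|_F|$ is always birational by Reider), so the real content of the theorem is the statement for \emph{all} $P$, including $P=0$; your proposal has no mechanism to pass from general to all $P$ when $|2L|_F|$ is not birational, because the ``add $|L+P_2|$'' trick (Proposition~\ref{separate1}(4), Corollary~\ref{separate2}(4)) needs birationality already at level $2$ on $F$.

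The paper's route around this is precisely what is missing from your outline: instead of trying to surject onto $H^0(F,\OO_F(3H))$, it considers the cokernel $\QQ$ of the multiplication map $a_*\OO_X(L)\otimes a_*\OO_X(2L)\to a_*\OO_X(3L)$, uses that on an elliptic curve quotients of $IT^0$ sheaves are $IT^0$ (Lemma~\ref{ample=IT0}, false on higher-dimensional bases) to lift, for every $P$, a section of $\QQ\otimes P$ to a section of $|3L+a^*P|$ whose restriction to $F$ lies outside $\Image(m_{12})$, and then must prove (Lemma~\ref{key lemma} plus Propositions~\ref{separate K3}, \ref{separate Enriques}, \ref{separate ab}, \ref{key lemma biell}) that \emph{every} section outside $\Image(m_{12})$ separates the two points of a general fiber of $\phi_{|2H|}$. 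That last step is a case-by-case analysis of the projective models of K3, Enriques, abelian and bielliptic surfaces with $|2H|$ non-birational (Saint-Donat, Reid, Cossec, Ohbuchi), with a further twist for bielliptic fibers (the map $m_{12}$ must be replaced by $m'_{12}$ involving torsion multiples of $K_F$) and a separate treatment of abelian fibers with $H^2=2$ via Lemma~\ref{L-F nef}. None of this classification or lifting mechanism appears in your proposal, and without it the claimed surjectivity in Step 3, and hence the conclusion for all $P$, does not follow. (A minor additional point: since $K_X\equiv 0$, the Albanese map is automatically a surjective \'etale fiber bundle onto $A$ with connected fibers, so your Step 1 normalization and the $d=\dim a(X)<q$ scenarios do not arise.)
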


This result is optimal by the following examples.
\begin{example}
\begin{enumerate}
\item Let $C$ be an elliptic curve, $p\in C$, $S$ a smooth surface with $K_S\equiv 0$, and $H$ a nef and big divisor on $S$. Consider $X=C\times S$ and $L=p\times S+C\times H$. Then $|2L|$ does not give a birational map since $|2p|$ does not give a birational map on $C$.

\item Let $C$ be an elliptic curve, $D$ an effective divisor on $C$, $S=(6)\subset \bP(1,1,2,3)$ a general hypersurface of degree 6 (which is a smooth K3 surface), and $H=\OO_S(1)$. Consider $X=C\times S$ and $L=D\times S+C\times H$. Then $|2L|$ does not give a birational map since $|2H|$ does not give a birational map on $S$.
\item There are plenty of non-trivial examples, constructed by Oguiso, of irregular $3$-folds  with $K\equiv 0$ of the form $(S\times C)/G$ where $C$ is an elliptic curve, $S$  a K3 surface, and $G$ a group action. For more details, see \cite{Ogi}.
\end{enumerate}
\end{example}

We give a sketch of proof of Theorem \ref{main} here. In Section 3, we recall and generalize some results developed by J. A. Chen and Hacon to deal with linear systems on irregular varieties with $K\equiv 0$. It turns out that we only need to prove that  $|3L+P|$ separates two general points on a general fiber $F$ of the Albanese map on which $|2L|_F|$ does not give a birational map, and we only need to consider the cases that $q(X)=1$ or $2$, i.e., the Albanese fiber dimension is $2$ or $1$. In Section 4, we consider irregular varieties with $K\equiv 0$ and of Albanese fiber dimension one.

As by-product, we prove some interesting results for varieties with numerically trivial canonical bundle and of small Albanese fiber dimension which hold in arbitrary dimension. 
\begin{thm}[=Corollary \ref{m4}+Corollary \ref{fiber three}+Theorem \ref{fiber one}]\label{main3}
Let $X$ be a smooth projective variety  with $K_X\equiv 0$, $a : X \to  A=\Alb(X)$ the Albanese map, $L$ a nef and big divisor on $X$, and $P\in \Pic^0(X)$.
\begin{enumerate}
\item  If $\dim X-\dim A= 3$, then $|mL+P|$ gives a birational map for all $m\geq 6$;
\item  If $\dim X-\dim A= 2$, then $|mL+P|$ gives a birational map for all $m\geq 4$;
\item  If $\dim X-\dim A\leq 1$, then $|mL+P|$ gives a birational map for all $m\geq 3$, which is optimal.
\end{enumerate}
\end{thm}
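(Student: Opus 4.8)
The plan is to prove the three statements separately according to the Albanese fibre dimension $f:=\dim X-\dim A$, which is $\le 1$ in (3), equal to $2$ in (2), and equal to $3$ in (1); the cases require progressively more care. Write $F$ for a general fibre of $a$. Since $K_X\equiv 0$, the Albanese map $a$ is a fibration onto the abelian variety $A$ (Kawamata), so adjunction gives $K_F\equiv 0$, and since $L$ is nef and big on $X$, Kodaira's lemma (writing $L\sim_{\bQ}(\text{ample})+(\text{effective})$ and restricting to a general $F$) shows that $L|_F$ is nef and big on $F$. Hence Theorem~\ref{23}, applied on $F$ with the numerically trivial divisor $P|_F$, produces a birational map $|(cL+P)|_F|$ for an explicit $c=c(f)$: namely $c(2)=3$ and $c(3)=5$, while $c=3$ suffices when $\dim F\le 1$ because a divisor class of degree $\ge 3$ on an elliptic curve is very ample (so $|(3L+P)|_F|$ is birational even in the extremal case $L\cdot F=1$).

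The idea is then to feed this into the Chen--Hacon-type reduction developed in Section~3. To see that $|mL+P|$ defines a birational map it suffices to separate two general points $x_1,x_2\in X$. If $a(x_1)\neq a(x_2)$, it is enough that the sheaf $a_*\OO_X(mL+P)$ on $A$ separate these points of $A$; this follows from its continuous global generation, and that in turn from its $M$-regularity on $A$. If $a(x_1)=a(x_2)=y$ for a general $y$, so that $x_1,x_2\in F=a^{-1}(y)$, one combines: (i) relative Kawamata--Viehweg vanishing $R^ia_*\OO_X(mL+P)=0$ for $i>0$ — valid since $mL+P-K_X\equiv mL$ is $a$-nef and $a$-big — which via cohomology and base change identifies the fibre of $a_*\OO_X(mL+P)$ at $y$ with $H^0(F,(mL+P)|_F)$; (ii) global generation of $a_*\OO_X(mL+P)$ at the general point $y$, again drawn from its positivity on $A$; and (iii) Theorem~\ref{23} on $F$, which gives birationality of $|(cL+P)|_F|$ for $c=c(f)$. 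Tracking the positivity required in (i)--(iii), and using $K_X\equiv 0$ to economise one twist of $L$ relative to the general-type bookkeeping, one obtains birationality of $|mL+P|$ for $m\ge c(f)+1$: this yields $m\ge 6$ when $f=3$ and $m\ge 4$ when $f=2$, but only the non-optimal bound $m\ge 4$ when $f\le 1$.

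I expect the main obstacle to be twofold. First, establishing $M$-regularity of $a_*\OO_X(mL+P)$ on $A$ with the \emph{sharp} value of $m$ and \emph{uniformly} in $P\in\Pic^0(X)$: since $L$ is only nef and big on $X$ while the pullback of an ample divisor from $A$ is merely nef on $X$, one cannot split an ample class on $A$ off of $mL$, so the positivity of the push-forward must be produced from the relative positivity of $L$ together with the generic vanishing / Fourier--Mukai package, run with general twists $P_\alpha$ and then propagated to all $P$. Second — and this is where the optimal bound $m\ge 3$ in (3) actually lives — the case of Albanese fibre dimension one is not accessible by the crude ``$c+1$'' estimate and requires the separate, more structural analysis of Section~4, exploiting that for a one-dimensional fibre the relevant push-forward already attains maximal rank at the general point of $A$ and carries none of the defect (of $|2\Theta|$-type) that appears for surface and threefold fibres. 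The remaining sub-case $f=0$ is not covered by the fibre argument at all: there $X$ is an abelian variety (a standard consequence of $K_X\equiv 0$ with $\Alb(X)=A$), $|3L+P|$ is very ample by the classical theory of theta groups, and the bound $3$ is sharp since $|2\Theta|$ on a principally polarised abelian variety is the $2{:}1$ Kummer morphism — which also accounts for the optimality asserted in (3).
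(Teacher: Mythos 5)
Your treatment of parts (1) and (2) is essentially the paper's own route: restrict to a general Albanese fibre $F$ (which has $K_F\equiv 0$ and $L|_F$ nef and big because $a$ is a smooth isotrivial fibre bundle), apply Theorem \ref{23} on $F$ to get birationality of $|cL|_F|$ with $c=3$ for surface fibres and $c=5$ for threefold fibres, and then use the Chen--Hacon separation statements (Proposition \ref{separate1}, Corollary \ref{separate2}) to conclude birationality of $|(c+1)L+P|$ on $X$ for all $P$; that is exactly Corollary \ref{m4} and Corollary \ref{fiber three}, and your bookkeeping there is correct.

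The genuine gap is part (3). Your mechanism yields only $m\geq 4$ when the fibre is an elliptic curve, and you explicitly defer the sharp bound $m\geq 3$ to ``a more structural analysis'' without supplying it --- but that analysis is the whole content of Theorem \ref{fiber one} and is the nontrivial part of the statement. Concretely, Corollary \ref{m4}(2)(3) reduces one to separating two general points on a single fibre $F$ in the residual case $L\cdot F=1$ (if $L\cdot F\geq 2$ then $|2L|_F|$ is already very ample and the $n\mapsto n+1$ trick at level $2\to 3$ applies); note that your remark that degree-$3$ divisors on an elliptic curve are very ample does not help by itself, since with your $c+1$ scheme it is $|2L|_F|$ that would have to be birational. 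The missing ingredient is Lemma \ref{L-F nef}: when $h^0(F,\OO_F(L))=1$ one has $a_*\OO_X(L)\simeq\OO_A(D)$ with $D$ ample, the effective divisor $L-a^*D$ is nef (a curve meeting it negatively would be rational, hence contracted by $a$, contradicting nefness of $L$), so $3L-a^*D$ is nef and big; Kawamata--Viehweg vanishing then supplies the vanishings needed for Pareschi's criterion, giving that $a_*\OO_X(3L)\otimes P$ is globally generated for \emph{every} $P\in\Pic^0(A)$, whence $H^0(X,\OO_X(3L)\otimes a^*P)\to H^0(F,\OO_F(3L))$ is surjective and separation on $F$ follows from very ampleness of $3L|_F$. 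Your appeal to the push-forward ``attaining maximal rank at the general point'' does not produce this surjectivity, and the uniform-in-$P$ M-regularity you flag as an obstacle is precisely what Lemma \ref{L-F nef} establishes; without it your argument proves only $m\geq 4$ in case (3). (Your handling of the fibre-dimension-zero subcase via abelian varieties and of optimality is fine.)
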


In Section 5, we consider irregular $3$-folds with $K\equiv 0$ and of Albanese fiber dimension two. This is the most difficult part of the proof. We need to lift a section of $|3L|_F|$ to $X$ which can separate points that $|2L|_F|$ can not separate. This comes in two steps. Firstly, we prove that we can lift at least one section of $|3L|_F|$ which does not come from  $|2L|_F|$  to $X$. Then we prove that such a section is what we want by analyzing the geometry of $F$ explicitly. To this end, we need a well-understanding for the projective models of minimal surfaces with Kodaira dimension zero.  We use classical results in \cite{Cos, D, Oh, Reid, SD} to prove that almost all such surfaces satisfy a nice property (the assumption of Lemma \ref{key lemma}).

We remark that 
Theorem \ref{main} holds even if $X$ has canonical singularities and $L$ is a nef and big  Cartier divisor on $X$. This is simply because we may replace $X$ by its terminalization, which is in fact smooth by \cite[Theorem 8.3]{K1}.

By the method developed in proving Theorem \ref{main}, we study  irregular $4$-folds with numerically trivial canonical bundle in the case that $L$ is ample. We prove the following theorem in the last section.
\begin{thm}\label{main2}
Let $X$ be a smooth projective irregular $4$-fold  with $K_X\equiv 0$ and $L$ an ample divisor on $X$.  Then
$|mL+P|$ gives a birational map for all $m\geq 5$ and all $P\in \Pic^0(X)$.
\end{thm}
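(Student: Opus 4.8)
The plan is to follow the same strategy used for Theorem \ref{main}, passing through the Albanese map $a : X \to A = \Alb(X)$, but now exploiting ampleness of $L$ to gain extra positivity. Since $X$ is irregular, $\dim A = q(X) \geq 1$, so the Albanese fiber dimension $d = \dim X - \dim A$ satisfies $0 \le d \le 3$. When $d \le 1$ we are done by Theorem \ref{main3}(3) with $m \ge 3$; when $d = 2$ we are done by Theorem \ref{main3}(2) with $m \ge 4$; when $d = 3$ Theorem \ref{main3}(1) gives $m \ge 6$, which is \emph{not} good enough, so the heart of the matter is the case $d = 3$, i.e. $q(X) = 1$, where $F$ is a general fiber of $a$, a smooth projective threefold with $K_F \equiv 0$. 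The generalized Chen--Hacon machinery recalled in Section 3 reduces birationality of $|mL+P|$ on $X$ to separating two general points on such an $F$, using that $|(m-1)L|_F + (\text{something from }A)$ restricts well; concretely, one must show $|5L+P|$ separates two general points of a general Albanese fiber $F$, where $L|_F$ is an ample divisor on the smooth threefold $F$ with $K_F \equiv 0$.

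So the core reduces to: for a smooth projective threefold $F$ with $K_F \equiv 0$ and $L_F := L|_F$ ample, $|5L_F + (P|_F)|$ separates two general points — in fact it suffices to show $|5L_F|$ (or $|5L_F + T|$ with $T \equiv 0$) gives a birational map, which should follow from a Reider-type / Angehrn--Siu-type argument in dimension three together with the vanishing $H^1(F, K_F \otimes \text{(nef and big)}) = 0$. The key gain over Theorem \ref{23}(3) (which needs $m \ge 10$ for nef and big $L$) is that ampleness lets one run the Angehrn--Siu cutting-down-the-locus argument more efficiently: to separate two general points $x, y$ of a threefold one needs a $\bQ$-divisor numerically equivalent to a small multiple of $L_F$ that is not klt exactly at $x$ and $y$ and has controlled multiplicity, and for ample $L_F$ on a Calabi--Yau threefold the Nadel vanishing $H^1(F, K_F \otimes L_F^{\otimes k} \otimes \III) = H^1(F, L_F^{\otimes k} \otimes \III) = 0$ kicks in for $k$ around $5$ rather than $10$. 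One should also reduce modulo torsion/$\Pic^0$ exactly as in the proof of Theorem \ref{main}: twisting by $P \in \Pic^0(X)$ is harmless because $P|_F \in \Pic^0(F)$ and the relevant vanishing theorems (Kawamata--Viehweg, generic vanishing) are insensitive to such twists.

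Concretely I would organize the proof as follows. First, apply the Albanese map and split into cases on $d = \dim X - \dim A \in \{0,1,2,3\}$; dispatch $d \le 2$ immediately via Theorem \ref{main3}. Second, in the remaining case $d = 3$, $q(X) = 1$, invoke the Section~3 reduction to the statement that $|5L + P|$ separates two general points lying on a general Albanese fiber $F$ — here one uses that $h^0(A, a_* \OO_X(5L+P) \otimes (\text{ample on }A)) $ behaves well and that sections on $F$ extend, exactly parallel to the $3$-fold argument where $|3L|_F|$-sections were lifted. Third, prove the threefold statement: for $F$ smooth with $K_F \equiv 0$ and $L_F$ ample, $|5L_F + (P|_F)|$ gives a birational map. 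For this I would run the standard three-dimensional birationality argument: pick two general points, use Riemann--Roch / asymptotic estimates ($h^0(F, kL_F) \sim \tfrac{k^3}{6} L_F^3$) to produce a singular divisor in $|kL_F|$ for small $k$, cut down the non-klt locus via tie-breaking and inversion of adjunction to isolate the two points, and conclude by Nadel vanishing, checking that $5$ suffices. Finally, assemble: the optimality claim "$m \ge 5$ is optimal" is presumably justified by a product example $X = E \times Y$ with $E$ elliptic and $Y$ a Calabi--Yau threefold with an ample $H$ such that $|4H|$ is not birational (e.g.\ a hypersurface case), analogous to the examples after Theorem \ref{main}; I would include such an example.

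The main obstacle I anticipate is the threefold birationality statement for ample $L_F$ with the sharp constant $5$: getting from "birational for some $m$" down to exactly $m = 5$ requires a careful Angehrn--Siu/Nadel analysis on Calabi--Yau threefolds, controlling the dimension of the non-klt locus (it may be a point, a curve, or a surface through $x, y$) and the multiplicity bounds in each stratum, and handling the non-general fibers and the $\Pic^0$-twist uniformly. A secondary subtlety is making the Section~3 lifting argument produce genuinely \emph{enough} sections on $X$ from sections on $F$ when $d = 3$ rather than $d = 2$ — the cohomology-and-base-change and Fourier--Mukai bookkeeping is heavier one dimension up, though structurally identical to the threefold case already carried out in the paper. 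Everything else (the vanishing theorems, the reduction to general fibers, the optimality example) should be routine given the tools assembled in Sections 3–5.
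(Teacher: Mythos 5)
There is a genuine gap, and it sits exactly where the paper has to work hardest. Your reduction claims that it suffices to show $|5L_F|$ (equivalently $|5H|$ with $H=L|_F$) gives a birational map on a general Albanese fiber $F$. But birationality of $|5H|$ on $F$ does not by itself give separation of two general points of $F$ by $|5L+a^*P|$ on $X$: the restriction map $H^0(X,\OO_X(5L)\otimes a^*P)\to H^0(F,\OO_F(5H))$ need not be surjective, and the Section~3 machinery only upgrades fiberwise birationality to the total space either for \emph{general} $P$ (Corollary \ref{separate2}(3)) or, for \emph{all} $P$, at the cost of one extra copy of $L$ (Corollary \ref{separate2}(4)), i.e.\ it would need $|4H|$ birational to handle $|5L+a^*P|$ for every $P$. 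The whole difficulty of the case $q(X)=1$, $\dim F=3$ is precisely when $|4H|$ is \emph{not} birational, and then one must lift from $F$ to $X$ not arbitrary sections of $|5H|$ but sections known to separate the points that $|4H|$ fails to separate. The paper does this by first forcing $K_F\sim 0$ (via Theorem \ref{main} applied to $F$, giving $q(F)=0$), then invoking Oguiso's classification \cite{O} of polarized Calabi--Yau threefolds with $|4H|$ non-birational: either $h^0(F,\OO_F(H))=1$, handled by the global generation statement of Lemma \ref{L-F nef}, or $F=(10)\subset\bP(1,1,1,2,5)$, handled by an $IT^0$/ampleness argument on the cokernel of the multiplication map $m_{1423}$ (using the \'etale splitting of the Albanese bundle, available because $q(F)=0$) to lift one section outside $\Image(m_{1423})$, which is then checked explicitly to separate the two sheets of the degree-$2$ map $\phi_{|4H|}$. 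None of this bookkeeping is ``structurally identical'' to the surface case in a way that can be waved through: it is where the classification input and the lifting construction actually enter, and your proposal omits both.

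The second problem is the threefold input itself. The statement ``$|5L_F|$ is birational for every smooth threefold $F$ with $K_F\equiv 0$ and $L_F$ ample'' is essentially Oguiso's theorem, and your plan to re-derive it by a Reider/Angehrn--Siu/Nadel cutting argument with the sharp constant $5$ is not something the known multiplier-ideal technology delivers: those methods give much weaker uniform bounds in dimension three (compare the $m\geq 10$ of Theorem \ref{23}(3), obtained by exactly such techniques for nef and big divisors), and Oguiso's proof proceeds by a careful case analysis rather than a vanishing-theorem argument. So even granting your (unproved) reduction, the core analytic step is an open-ended research problem rather than a routine verification, whereas the paper simply quotes \cite[Theorem (1.1)]{O} and only needs the two-case classification, not a new proof of it. To repair your argument you would need (i) to restore the lifting step for all $P\in\Pic^0(X)$ in the two Oguiso cases (global generation via Lemma \ref{L-F nef} when $h^0(H)=1$, and the multiplication-map/$IT^0$ quotient argument in the weighted hypersurface case), and (ii) to replace the Angehrn--Siu sketch by the citation of Oguiso's classification.
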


In fact, this result holds for varieties of Albanese fiber dimension at most three, see Theorem \ref{main4}. This result is optimal by the following example.
\begin{example}
Let $C$ be an elliptic curve, $D$ an effective divisor on $C$, $F=(10)\subset \bP(1,1,1,2,5)$ a general hypersurface of degree 10 (which is a smooth Calabi--Yau $3$-fold), and $H=\OO_F(1)$. Consider $X=C\times F$ and $L=D\times F+C\times H$. Then $|4L|$ does not give a birational map since $|4H|$ does not give a birational map on $F$.
\end{example}


 {\it Acknowledgment.}
The author  appreciates the very effective discussion with Professor Christopher D. Hacon during the preparation of this paper. Part of this paper was written during the author's visit to University of Utah in March 2015 and Beijing International Center for Mathematical Research in May 2015, and the author would like to thank for the hospitality and support. The author would also like to thank the referee for valuable suggestions.

\section{Preliminaries}

Throughout we work over an algebraically closed field $k$ of characteristic 0 (for instance, $k=\bC$). 

\subsection{Projective varieties with $K\equiv 0$}\label{subsection 2.1}
Let $X$ be a smooth projective variety  with $K_X\equiv 0$. Then $K_X\sim _\bQ 0$ by \cite[Theorem 8.2]{K1}.  Moreover,  let $a : X \to  A=\Alb(X)$ be the Albanese map, then $a$ is an \'etale fiber  bundle, i.e., there is an \'etale covering $\pi: B\to A$ such that $X\times_A B\simeq F\times B$, for a fiber $F$ of $a$ (cf. \cite{BeBo} or \cite[Theorem 8.3]{K1}). In particular, $a$ is surjective, smooth, and isotrivial, $\dim A=q(X)\leq \dim X$, and a fiber $F$ of $a$ is a smooth projective variety  with $K_F\equiv 0$.

\subsection{$IT^0$ sheaves} 
We recall the definition of $IT^0$ sheaves and some basic lemmas proved by J. A. Chen and Hacon inspired by Fourier--Mukai transform.

\begin{definition}A coherent sheaf $\mathcal{F}$ on an abelian variety $A$ is said to be $IT^0$ if $H^i(A,\mathcal{F}\otimes P) = 0$ for all $i > 0$ and all $P\in \Pic^0(A)$.
\end{definition}
\begin{lem}[{\cite[Lemma 2.1]{CH1}}]\label{lemma 1}Let $\mathcal{F}$ be a non-zero coherent $IT^0$ sheaf on an abelian variety $A$. Then $H^0(A,\mathcal{F}\otimes P)\neq 0$ for all $P\in \Pic^0(A)$.
\end{lem}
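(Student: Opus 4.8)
The plan is to reduce the statement to the non-vanishing of the Euler characteristic $\chi(A,\mathcal F)$, and then to extract that non-vanishing from Mukai's Fourier transform together with the $IT^0$ hypothesis. First I would note that, since $H^i(A,\mathcal F\otimes P)=0$ for all $i>0$ and all $P\in\Pic^0(A)$, we have
\[
h^0(A,\mathcal F\otimes P)=\chi(A,\mathcal F\otimes P)=\chi(A,\mathcal F),
\]
the last equality because $\{\mathcal F\otimes P\}_{P\in\Pic^0(A)}$ is a flat family over the connected base $\Pic^0(A)$ (equivalently, $P$ is numerically trivial, so it does not affect $\chi$). Hence $P\mapsto h^0(A,\mathcal F\otimes P)$ is constant, and it suffices to show that this common value $\chi(A,\mathcal F)$ is nonzero, i.e.\ positive since it is an $h^0$.

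Next I would bring in the Fourier--Mukai transform. Let $\mathcal P$ be a Poincar\'e bundle on $A\times\widehat A$ with projections $p\colon A\times\widehat A\to A$ and $q\colon A\times\widehat A\to\widehat A$, and set $\widehat{\mathcal F}:=q_*(p^*\mathcal F\otimes\mathcal P)$. Because the $IT^0$ hypothesis kills \emph{all} higher cohomology for \emph{all} twists, the function $[P]\mapsto h^0(A,\mathcal F\otimes P)$ is constant and $R^iq_*(p^*\mathcal F\otimes\mathcal P)=0$ for $i>0$; by cohomology and base change, $\widehat{\mathcal F}$ is therefore locally free, of rank $\chi(A,\mathcal F)$, and its formation commutes with base change, so its fibre at a point $[P]\in\widehat A$ is canonically $H^0(A,\mathcal F\otimes P)$. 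Now if $\chi(A,\mathcal F)=0$ then $\widehat{\mathcal F}=0$; but the Fourier--Mukai functor $R\widehat S\colon D^b(A)\to D^b(\widehat A)$ is an equivalence of derived categories (Mukai), so it sends the nonzero object $\mathcal F$ to a nonzero object, which for an $IT^0$ sheaf is exactly the sheaf $\widehat{\mathcal F}$ placed in degree $0$ --- a contradiction. Hence $\chi(A,\mathcal F)=\rank\widehat{\mathcal F}>0$, and a locally free sheaf of positive rank has every fibre nonzero, so $H^0(A,\mathcal F\otimes P)\neq0$ for all $P\in\Pic^0(A)$.

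The delicate point is not any computation but the global application of cohomology and base change: it is precisely the strength of $IT^0$ (vanishing of \emph{all} $H^{>0}$ for \emph{all} $P$, not merely at one twist) that upgrades $\widehat{\mathcal F}$ from a coherent sheaf to an honest vector bundle, which is what lets us pass from ``$\chi\neq0$'' to ``every fibre is nonzero''. The other input that does real work is Mukai's inversion/equivalence, which is what rules out the degenerate case $\chi(A,\mathcal F)=0$; the support-dimension cases ($\dim\operatorname{Supp}\mathcal F=0$, where the claim is trivial) can alternatively be peeled off by hand, but the Fourier--Mukai argument handles all cases uniformly.
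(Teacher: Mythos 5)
Your proof is correct and follows essentially the same route as the cited source: the paper does not reprove this lemma but quotes it from Chen--Hacon \cite[Lemma 2.1]{CH1}, whose argument is exactly the one you give --- the $IT^0$ hypothesis makes the Fourier--Mukai transform $\widehat{\mathcal F}$ a vector bundle of rank $\chi(A,\mathcal F)=h^0(A,\mathcal F\otimes P)$ with fibres $H^0(A,\mathcal F\otimes P)$, and Mukai's equivalence (or inversion theorem) excludes $\widehat{\mathcal F}=0$ for $\mathcal F\neq 0$, so the rank is positive and every fibre is nonzero. No gaps to report.
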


\begin{lem}[{\cite[Proposition 2.3]{CH1}}]\label{nonzero}  Let $\mathcal{F}$ be a coherent  $IT^0$ sheaf on an abelian variety $A$. Suppose that there is a non-zero morphism $\mathcal{F}\to \mathbb{C}(z)$. Then the induced morphism $H^0(A,\mathcal{F}\otimes P) \to H^0(\mathbb{C}(z))$ is non-zero for general $P\in \Pic^0(A)$.
\end{lem}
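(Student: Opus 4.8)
The plan is to pass to the Fourier--Mukai transform on the dual abelian variety, where the skyscraper sheaf $\mathbb{C}(z)$ becomes a line bundle and the $IT^0$ hypothesis turns $\mathcal{F}$ into a vector bundle; the assertion then reduces to an elementary argument with cohomology and base change.

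First I would reformulate what has to be proved. Since $\mathbb{C}(z)$ is a simple sheaf, the given nonzero morphism $u\colon\mathcal{F}\to\mathbb{C}(z)$ is surjective, so there is an exact sequence $0\to\mathcal{K}\to\mathcal{F}\xrightarrow{u}\mathbb{C}(z)\to0$. Twisting by $P\in\Pic^0(A)$ and using $H^1(A,\mathcal{F}\otimes P)=0$ (the $IT^0$ hypothesis), the long exact sequence shows that $H^0(A,\mathcal{F}\otimes P)\to H^0(\mathbb{C}(z))$ is surjective --- equivalently, nonzero, the target being one dimensional --- if and only if $H^1(A,\mathcal{K}\otimes P)=0$. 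By upper semicontinuity of $P\mapsto h^1(\mathcal{K}\otimes P)$ it would thus suffice to produce a single $P$ with this vanishing, and I expect that this is exactly the step one cannot carry out by hand, which is why the Fourier--Mukai transform is invoked.

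Now let $\widehat{A}=\Pic^0(A)$, let $\mathcal{P}$ denote the Poincar\'{e} bundle on $A\times\widehat{A}$, and write $R\widehat{S}(\cdot)=Rp_{2*}\big(p_1^*(\cdot)\otimes\mathcal{P}\big)$. Since $\mathcal{F}$ is $IT^0$, the number $h^0(A,\mathcal{F}\otimes P)=\chi(\mathcal{F})$ is independent of $P$, so by cohomology and base change $\widehat{\mathcal{F}}:=R^0\widehat{S}\mathcal{F}$ is a vector bundle on $\widehat{A}$ whose formation commutes with base change, giving canonical isomorphisms $\widehat{\mathcal{F}}\otimes\mathbb{C}(P)\cong H^0(A,\mathcal{F}\otimes P)$ for every $P\in\widehat{A}$. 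Likewise $\mathbb{C}(z)$ is $IT^0$, and $R\widehat{S}\mathbb{C}(z)$ is the line bundle $P_z:=\mathcal{P}|_{\{z\}\times\widehat{A}}$, with $P_z\otimes\mathbb{C}(P)\cong H^0(\mathbb{C}(z)\otimes P)\cong H^0(\mathbb{C}(z))$. Because $R\widehat{S}$ is an equivalence of derived categories, $u$ transforms into a \emph{nonzero} morphism $\widehat{u}\colon\widehat{\mathcal{F}}\to P_z$ of sheaves on $\widehat{A}$, and by naturality of the base-change isomorphisms the map $\widehat{u}\otimes\mathbb{C}(P)$ is identified, for each $P$, with the morphism $H^0(A,\mathcal{F}\otimes P)\to H^0(\mathbb{C}(z))$ of the statement.

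Finally I would conclude by a torsion-freeness argument on $\widehat{A}$. The image of the nonzero morphism $\widehat{u}$ from a vector bundle to a line bundle on the integral variety $\widehat{A}$ equals $\mathcal{I}\cdot P_z$ for a nonzero ideal sheaf $\mathcal{I}\subseteq\mathcal{O}_{\widehat{A}}$, hence $\mathrm{coker}(\widehat{u})\cong(\mathcal{O}_{\widehat{A}}/\mathcal{I})\otimes P_z$ is supported on a proper closed subset $W\subsetneq\widehat{A}$. For $P\notin W$, right exactness of $-\otimes\mathbb{C}(P)$ shows that $\widehat{u}\otimes\mathbb{C}(P)$ is surjective onto the one dimensional space $P_z\otimes\mathbb{C}(P)$, hence nonzero; by the identification above, $H^0(A,\mathcal{F}\otimes P)\to H^0(\mathbb{C}(z))$ is nonzero for all $P$ in the dense open set $\widehat{A}\setminus W$, which is the assertion. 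The only genuinely delicate point of the plan is checking that the base-change isomorphisms for $\mathcal{F}$ and for $\mathbb{C}(z)$ are compatible with $u$; this is routine once one notes that $IT^0$ makes $h^0(\mathcal{F}\otimes P)$ constant, so Grauert's criterion applies, and that $\mathcal{P}$ restricts to a line bundle along $\{z\}\times\widehat{A}$. After that the argument is purely formal.
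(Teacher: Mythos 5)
Your argument is correct: passing to the Fourier--Mukai transform, using that the $IT^0$ condition makes $\widehat{\mathcal{F}}=p_{2*}(p_1^*\mathcal{F}\otimes\mathcal{P})$ a vector bundle whose fibre at $P$ computes $H^0(A,\mathcal{F}\otimes P)$, that the transform of $\mathbb{C}(z)$ is the line bundle $\mathcal{P}|_{\{z\}\times\widehat{A}}$, and that a nonzero map of sheaves to a line bundle on the integral variety $\widehat{A}$ is surjective off a proper closed subset, is precisely the standard proof of this statement. The paper itself gives no proof but quotes the lemma from Chen--Hacon, and your Fourier--Mukai argument is essentially the one in that source, so there is nothing to correct or add.
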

In the case we are interested in, we have the following lemma.
\begin{lem}\label{nef big IT0}Let $X$ be a smooth projective variety with $K_X\equiv 0$, $a : X \to  A=\Alb(X)$ the Albanese map, and $L$ a nef and big divisor on $X$. Then $a_*\OO_X(L)$ is a locally free $IT^0$ sheaf. Moreover, if $\dim X-\dim A\leq 3$, then $a_*\OO_X(L)$ is non-zero.
\end{lem}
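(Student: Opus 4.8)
The plan is to reduce everything to the Kawamata--Viehweg vanishing theorem via the étale-bundle structure of the Albanese map described in Subsection \ref{subsection 2.1}. First I would establish that $a_*\OO_X(L)$ is locally free. Since $a$ is a smooth morphism (indeed an étale fiber bundle) and $L$ is nef and big on $X$, for every point $s\in A$ with fiber $F=F_s$ we have $\dim H^i(F,\OO_X(L)|_F)$ constant in $s$ by the isotriviality of $a$; in fact $\OO_X(L)|_F$ is nef and big on $F$ with $K_F\equiv 0$, so $H^i(F,\OO_X(L)|_F)=0$ for $i>0$ by Kawamata--Viehweg vanishing (applied to $L|_F = K_F + (L|_F - K_F)$, using $K_F\sim_{\bQ}0$). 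Hence $R^i a_*\OO_X(L)=0$ for $i>0$ and, by cohomology and base change, $a_*\OO_X(L)$ is locally free with fiber $H^0(F,\OO_X(L)|_F)$.

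Next I would prove the $IT^0$ property. For $P\in\Pic^0(A)$, using the projection formula and $R^ia_*\OO_X(L)=0$ for $i>0$, the Leray spectral sequence collapses and gives
\[
H^i(A,a_*\OO_X(L)\ot P) \cong H^i(X,\OO_X(L)\ot a^*P).
\]
Now $a^*P$ is a numerically trivial line bundle on $X$, so $L+a^*P$ is numerically equivalent to the nef and big divisor $L$, and $K_X\sim_{\bQ}0$ gives $L+a^*P \sim_{\bQ} K_X + (L+a^*P)$ with $L+a^*P$ nef and big. By Kawamata--Viehweg vanishing, $H^i(X,\OO_X(L)\ot a^*P)=0$ for all $i>0$ and all $P\in\Pic^0(A)$, which is exactly the $IT^0$ condition for $a_*\OO_X(L)$.

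Finally, for the non-vanishing statement when $\dim X-\dim A\leq 3$: by the previous paragraph the fiber of $a_*\OO_X(L)$ at a general (hence every) point is $H^0(F,\OO_X(L)|_F)$, so it suffices to show this is non-zero, i.e. $h^0(F, L|_F)>0$ for $F$ a smooth projective variety of dimension at most $3$ with $K_F\equiv 0$ and $L|_F$ nef and big. I would argue dimension by dimension: for $\dim F\leq 1$ this is immediate ($\deg L|_F>0$ forces $h^0>0$ when $F$ is a point or an elliptic curve, using $K_F\equiv 0$ and Riemann--Roch); for $\dim F=2$, $F$ is a minimal surface of Kodaira dimension zero and Riemann--Roch together with $H^i(F,L|_F)=0$ ($i>0$) gives $h^0(F,L|_F)=\chi(\OO_F) + \tfrac12 L|_F\cdot(L|_F-K_F) = \chi(\OO_F) + \tfrac12 (L|_F)^2 > 0$ once one checks $\chi(\OO_F)\geq 0$, which holds for all surfaces with $\kappa=0$; for $\dim F=3$, one again uses vanishing plus Riemann--Roch, the Miyaoka--type inequality giving $\chi(\OO_F)\geq 0$ for threefolds with $K_F\equiv 0$ (or cites the classification), and positivity of the $L|_F$-self-intersection terms. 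The main obstacle is this last non-vanishing in dimension $3$: controlling the lower-order terms of Riemann--Roch (in particular ensuring $\chi(\OO_F)\geq 0$ and that the $c_2$-contribution does not overwhelm the positive $(L|_F)^3$ term) is where the hypothesis $\dim X-\dim A\leq 3$ is genuinely used and where a short appeal to the structure of threefolds with numerically trivial canonical class is needed.
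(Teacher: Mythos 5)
Your proposal follows essentially the same route as the paper: local freeness from fiberwise Kawamata--Viehweg vanishing plus constancy of $\chi$ over the smooth isotrivial Albanese map, the $IT^0$ property from Kawamata--Viehweg applied to $L+a^*P\equiv L$ on $X$, and non-vanishing from Riemann--Roch on a fiber of dimension $\leq 3$. The "obstacle" you flag in dimension $3$ is not a real one and is what the paper's terse "by Riemann--Roch" subsumes: since $\chi$ of a divisor depends only on its numerical class, $K_F\equiv 0$ and Serre duality give $\chi(\OO_F)=\chi(K_F)=-\chi(\OO_F)$, hence $\chi(\OO_F)=0$, and Miyaoka's pseudo-effectivity of $c_2$ for minimal threefolds yields $L|_F\cdot c_2(F)\geq 0$, so $h^0(F,L|_F)=\chi(F,L|_F)=\tfrac{1}{6}(L|_F)^3+\tfrac{1}{12}L|_F\cdot c_2(F)>0$.
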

\begin{proof}
For $z\in A$,  the fiber $X_z$ is a smooth projective variety  with $K\equiv 0$ and $L|_{X_z}$ is nef and big. By Kawamata--Viehweg vanishing theorem, 
$$
h^0(X_z, \OO_{X_z}(L))=\chi (X_z, \OO_{X_z}(L)),
$$
which is a constant since $a$ is smooth. Hence $a_*\OO_X(L)$ is  locally free.

For $P\in \Pic^0(A)$, since $L$ is nef and big, by  Kawamata--Viehweg vanishing theorem again, 
$$
H^i(A, a_*\OO_X(L)\otimes P)\simeq H^i(X, \OO_X(L)\otimes a^*P)=0
$$
for $i>0$. Hence $a_*\OO_X(L)$ is  $IT^0$.

If $\dim X_z=\dim X-\dim A\leq 3$, then $$a_*\OO_X(L)\otimes \bC(z)\simeq H^0(X_z, \OO_{X_z}(L))\neq 0$$ by Riemann--Roch formula and thus $a_*\OO_X(L)$ is non-zero.
\end{proof}
For sheaves on elliptic curves, we have the following lemma. Note that it does not hold in general dimension.
\begin{lem}\label{ample=IT0} Let $\FF$ be a coherent sheaf on an elliptic curve $C$.
\begin{enumerate}
\item If $\FF$ is locally free, then $\FF$ is  $IT^0$ if and only if it is ample.
\item If $\FF$ is  $IT^0$, so is every quotient sheaf of $\FF$.
\end{enumerate}
\end{lem}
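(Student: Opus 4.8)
The plan is to exploit the classification of vector bundles on an elliptic curve, together with Serre duality on $C$. For part (1), recall that a locally free sheaf $\FF$ on an elliptic curve $C$ decomposes (after Atiyah) as a direct sum of indecomposable bundles, each of which is a successive self-extension of a line bundle; since $\Pic^0(C)$-twisting, vanishing, and ampleness are all additive over direct sums, it suffices to treat an indecomposable bundle $\FF$ of rank $r$ and degree $d$. For such a bundle, ampleness is equivalent to $d>0$ (this is Hartshorne's classical computation of ample bundles on elliptic curves), so I would show that the $IT^0$ condition is also equivalent to $d>0$. The key computation is $\chi(C,\FF\ot P)=\deg(\FF\ot P)=d$ for any $P\in\Pic^0(C)$, together with the fact that for an indecomposable bundle $H^0(C,\FF\ot P)$ and $H^1(C,\FF\ot P)$ cannot both be nonzero unless $\FF\ot P\simeq\OO_C$ type degenerate cases occur; more robustly, if $d>0$ one checks $H^1(C,\FF\ot P)=H^0(C,\FF^\vee\ot P^{-1})^\vee=0$ because $\FF^\vee\ot P^{-1}$ is again indecomposable of negative degree, hence has no sections, giving $IT^0$. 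Conversely if $d\leq 0$ then either some twist $\FF\ot P$ has a section forcing (by indecomposability and $\chi\leq 0$) a nonzero $H^1$, or $\FF\ot P$ has no section for all $P$ but then $H^1(C,\FF\ot P)=-\chi=-d+(\text{something})\neq 0$; I would phrase this cleanly via Serre duality applied to $\FF^\vee$, which is indecomposable of degree $-d\geq 0$, and invoke Lemma \ref{lemma 1} in reverse to locate a $P$ with a section.

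For part (2), let $\FF$ be $IT^0$ and let $\FF\twoheadrightarrow\QQ$ be a quotient. Since $\dim C=1$, every coherent sheaf $\QQ$ splits as $\QQ_{\mathrm{tors}}\oplus\QQ_{\mathrm{lf}}$ up to the usual torsion sequence $0\to\QQ_{\mathrm{tors}}\to\QQ\to\QQ_{\mathrm{lf}}\to 0$, and a torsion sheaf on a curve is automatically $IT^0$ (it has no $H^1$), so by the long exact sequence it is enough to treat the locally free quotient; thus I may assume $\QQ$ is locally free. Now $\FF$ itself need not be locally free, but its torsion subsheaf $\FF_{\mathrm{tors}}$ maps to $\QQ_{\mathrm{tors}}=0$, so $\FF\twoheadrightarrow\QQ$ factors through the locally free quotient $\FF_{\mathrm{lf}}$ of $\FF$, which is again $IT^0$; hence I reduce to a surjection of vector bundles $\FF\twoheadrightarrow\QQ$ with $\FF$ being $IT^0$. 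By part (1), $\FF$ is ample; but a quotient bundle of an ample bundle is ample (Hartshorne's properties of ample vector bundles), so $\QQ$ is ample, and by part (1) again $\QQ$ is $IT^0$.

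The main obstacle I anticipate is the reverse direction of part (1) — showing that a locally free sheaf which fails to be ample also fails to be $IT^0$ — because one must produce, for a bundle of degree $\leq 0$, an explicit $P\in\Pic^0(C)$ witnessing $H^1(C,\FF\ot P)\neq 0$. The cleanest route is via Serre duality: $H^1(C,\FF\ot P)\simeq H^0(C,\FF^\vee\ot P^{-1})^\vee$, and $\FF^\vee$ is locally free of degree $\geq 0$; if $\deg\FF^\vee>0$ then $\FF^\vee$ is ample, hence $IT^0$ by the forward direction, hence has a section after any twist by Lemma \ref{lemma 1}; if $\deg\FF^\vee=0$ then one uses that a degree-zero bundle on an elliptic curve has, in each $\Pic^0$-orbit of its indecomposable summands, a twist isomorphic to a self-extension of $\OO_C$ and in particular a twist with a nonzero section. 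Assembling these cases and making sure the chosen $P$ works simultaneously across direct summands (which it need not — one only needs a single bad $P$, so picking it summand by summand is fine) completes the argument.
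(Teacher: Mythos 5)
Your argument is correct, but it is a genuinely different (and much heavier) route than the paper's. For (1) the paper simply quotes \cite[Lemma 4.3]{CH2}, whereas you re-derive that statement from scratch via Atiyah's classification of bundles on an elliptic curve, Hartshorne's criterion (an indecomposable bundle is ample iff its degree is positive), and Serre duality with $K_C=0$; your treatment of the degree-zero indecomposable case (twist $\FF^\vee$ so that it becomes the Atiyah self-extension of $\OO_C$, which has a section) is the right way to close the converse, and it fixes the loose ``$-d+(\text{something})$'' remark in your middle paragraph. For (2) the paper's proof is a one-line cohomological-dimension argument: writing $0\to\KK\to\FF\to\QQ\to 0$ and twisting by $P$, the piece $H^1(C,\FF\ot P)\to H^1(C,\QQ\ot P)\to H^2(C,\KK\ot P)=0$ of the long exact sequence kills $H^1(C,\QQ\ot P)$, with no hypotheses on $\QQ$, no use of (1), no ampleness, and in fact no need for $C$ to be elliptic. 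Your reduction through torsion splitting and quotients of ample bundles is valid, but note that your unproved intermediate claim that $\FF_{\mathrm{lf}}=\FF/\FF_{\mathrm{tors}}$ is again $IT^0$ is itself most naturally justified by exactly that $H^2=0$ argument, which applied directly to an arbitrary quotient gives (2) at once; so your detour buys nothing except a dependence of (2) on (1), which the paper's proof avoids. What your approach does buy is a self-contained proof of (1) independent of \cite{CH2}, at the cost of invoking Atiyah's and Hartshorne's classifications.
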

\begin{proof}
(1) follows from {\cite[Lemma 4.3]{CH2}}. (2) follows from the fact that $H^i$ vanishes on $C$ for $i>1$.
\end{proof}


\subsection{Reider's theorem}
We recall Reider's theorem and its application on smooth surfaces with $K\equiv 0$. They will be useful in Section 5.
\begin{thm}[{\cite[Theorem 1]{Reider}}]\label{Reider thm}
Let $S$ be a smooth surface and $D$ a nef divisor on $S$.
\begin{enumerate}
\item If $D^2 \geq  5$ and $p$ is a base point of $|K_S + D|$, then there exists an effective divisor $E$ passing through $p$ such that
\begin{align*}
\text{either } &{}D. E = 0, E^2 = - 1,\\
 \text{or } &{}D. E = 1, E^2 = 0.
\end{align*}
\item If $D^2 \geq  10$ and points $p, q$ are not separated by $|K_S + D|$, then there exists an effective divisor $E$ on $S$ passing through $p$ and $q$ such that
\begin{align*}
\text{either } &{}D. E = 0, E^2 = - 2,\\
 \text{or } &{}D. E = 1, E^2 = -1,\\
 \text{or } &{}D. E = 2, E^2 = 0.
\end{align*}
\end{enumerate}
\end{thm}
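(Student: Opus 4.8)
The plan is to carry out Reider's original vector-bundle argument, which reduces both statements to the Bogomolov inequality together with the Hodge index theorem. Let $Z\subset S$ be the zero-dimensional subscheme witnessing the failure of the linear system: $Z=\{p\}$ of length $\ell=1$ in part (1), and $Z=\{p,q\}$ (or a length-two scheme supported at $p$, if the goal is to separate a tangent direction) of length $\ell=2$ in part (2). The hypothesis says the restriction map $H^0(S,\OO_S(K_S+D))\to H^0(Z,\OO_Z(K_S+D))$ is not surjective. First I would check the Cayley--Bacharach condition for $Z$ with respect to $|K_S+D|$: it is vacuous when $\ell=1$, while when $\ell=2$ its failure would force one of $p,q$ (resp.\ the point $p$) to be a base point of $|K_S+D|$, so part (1) applies there and already produces an effective divisor with $D\cdot E=0,\ E^2=-1$ or $D\cdot E=1,\ E^2=0$, which is on the list of part (2). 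Granting Cayley--Bacharach, Serre's construction yields a rank-two vector bundle $\mathcal{E}$ fitting into
\begin{equation*}
0\lra\OO_S\lra\mathcal{E}\lra\III_Z\ot\OO_S(D)\lra 0,
\end{equation*}
with $c_1(\mathcal{E})=D$ and $c_2(\mathcal{E})=\deg Z=\ell$.

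Next is the instability step. A rank-two bundle that is semistable with respect to some ample polarization satisfies $c_1^2\le 4c_2$, i.e.\ $D^2\le 4\ell$; under the hypotheses this fails, so $\mathcal{E}$ is Bogomolov-unstable, hence admits a saturated sub-line-bundle $\OO_S(D-E)\hookrightarrow\mathcal{E}$ with quotient $\III_{Z'}\ot\OO_S(E)$ for some divisor $E$ and some zero-dimensional $Z'$, such that $D-2E$ lies in the positive cone of $S$: $(D-2E)^2>0$ and $(D-2E)\cdot H>0$ for every ample $H$. The composite $\OO_S(D-E)\hookrightarrow\mathcal{E}\to\III_Z\ot\OO_S(D)$ cannot vanish, since otherwise the inclusion would factor through $\OO_S$, forcing $E-D$ effective and hence $(D-2E)\cdot H\le-D\cdot H<0$ (using that $D$ is big and nef), against positivity; so it produces an effective divisor in $|E|$, and tracing the Serre section through the quotient shows this divisor contains $Z$. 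Renaming it $E$, we have $E\ge 0$, $Z\subset E$, and from $c_2(\mathcal{E})=(D-E)\cdot E+\deg Z'$ the inequality $D\cdot E-E^2\le\ell$.

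It remains to cut $(D\cdot E,E^2)$ down to the listed triples. I would combine: $D$ nef, so $D\cdot E\ge 0$ and (approximating $D$ by ample classes) $(D-2E)\cdot D\ge 0$, whence $D\cdot E\le D^2/2$; the Hodge index theorem for the big nef class $D$, giving $(D\cdot E)^2\ge D^2\cdot E^2$; the constraint $D\cdot E-E^2\le\ell$; and the strict inequality $(D-2E)^2>0$. The first three force $(D\cdot E)^2-D^2(D\cdot E)+\ell D^2\ge 0$, and since $D^2>(\ell+1)^2$ — which is exactly the sharp thresholds $D^2\ge 5$ and $D^2\ge 10$, a genuine strengthening of the naive Bogomolov bound in the second case — the only integer solution with $D\cdot E\le D^2/2$ has $D\cdot E\le\ell$; a short case check on the remaining finitely many pairs (using e.g.\ that an effective divisor cannot be numerically trivial, and replacing $E$ by an irreducible component through $Z$ in the borderline cases) leaves precisely $(0,-1),(1,0)$ in part (1) and $(0,-2),(1,-1),(2,0)$ in part (2). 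The main obstacle is exactly this endgame — handling the small cases, keeping track of which points of $Z$ lie on $E$ as opposed to on the complementary divisor $D-E$, and pushing the inequalities to the sharp thresholds; the fact that $D$ is only nef rather than ample is a minor further nuisance, handled throughout by approximation with ample classes.
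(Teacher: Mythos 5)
The paper itself contains no proof of this theorem --- it is quoted from Reider's paper --- so the only benchmark is Reider's original vector-bundle argument, and your outline does follow it (Serre construction from the failed evaluation map, Bogomolov instability, Hodge index, case analysis). The numerical endgame you set up, namely $D\cdot E\ge 0$, $(D-2E)\cdot D\ge 0$, $(D\cdot E)^2\ge D^2E^2$, $D\cdot E-E^2\le\ell$, together with $D^2>(\ell+1)^2$, is correct and does force $D\cdot E\le\ell$; for part (1) it yields exactly $(D\cdot E,E^2)=(0,-1)$ or $(1,0)$. The problem is what it leaves in part (2): these constraints allow the five pairs $(0,-1),(0,-2),(1,-1),(1,0),(2,0)$, which is precisely the list in Reider's original Theorem 1(b), and your ``short case check'' that is supposed to eliminate $(0,-1)$ and $(1,0)$ is neither carried out nor routine --- both pairs are consistent with every inequality you have written down, and Reider's own statement retains them. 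So as written you prove at best the five-case version, not the three-case statement quoted above. (For the way the theorem is used in this paper the discrepancy is harmless: on the surfaces with $K\equiv 0$ where it is applied, $E^2$ is even by adjunction and the cases with $D\cdot E$ odd are killed by integrality of $H\cdot E$, so the extra pairs never survive; but that is a feature of the application, not a proof of the quoted list.)

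The second genuine gap is the degenerate branch of part (2). You correctly observe that failure of Cayley--Bacharach for $Z=\{p,q\}$, combined with non-separation, forces one of the points, say $q$, to be a base point of $|K_S+D|$; but invoking part (1) at $q$ produces an effective divisor through $q$ only, whereas the statement requires $E$ to pass through both $p$ and $q$, and the numerics it returns, $(0,-1)$ or $(1,0)$, are again not on the quoted list --- so ``part (1) applies there and we are done'' does not close this case. Note that the difficulty is intrinsic to your construction: when exactly one of the two points is a base point, every extension class in $\mathrm{Ext}^1(\III_Z\ot\OO_S(D),\OO_S)$ that actually exists restricts to zero at the other point, so no locally free Serre extension with $c_2=2$ supported on $\{p,q\}$ is available, and this branch needs a genuinely separate argument (or a statement formulated as in Reider's original). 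As it stands, part (2) is not proved in this case.
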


\begin{cor}\label{Reider K0}
Let $F$ be a smooth surface with $K_F\equiv 0$ and $H$ a nef and big divisor on $F$. Then $|2H|$ is base point free and $|3H|$ gives a birational map.
\end{cor}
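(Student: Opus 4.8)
The plan is to invoke Reider's theorem (Theorem \ref{Reider thm}) for the auxiliary divisor $D := mH - K_F$ with $m = 2$ and $m = 3$. Such a $D$ is numerically equivalent to $mH$, hence nef, and satisfies $K_F + D = mH$ exactly, so Reider applies directly to $|2H|$ and $|3H|$. The single extra ingredient is a parity observation forced by $K_F \equiv 0$: for any divisor $N$ on $F$ one has $N \cdot K_F = 0$, so Riemann--Roch on the surface reads $\chi(\OO_F(N)) = \chi(\OO_F) + \tfrac12 N^2$, and since the left-hand side is an integer, $N^2$ is even. Applied to $N = H$, and using that a nef and big $H$ on a surface has $H^2 > 0$, this gives $H^2 \geq 2$; consequently $D^2 = m^2 H^2 \geq 2m^2$, which clears Reider's thresholds $5$ (for $m = 2$) and $10$ (for $m = 3$) with room to spare.

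First I would prove base point freeness of $|2H| = |K_F + D|$ with $D = 2H - K_F$, so $D^2 = 4H^2 \geq 8$. If $p$ were a base point, Theorem \ref{Reider thm}(1) would produce an effective divisor $E$ through $p$ with $(D \cdot E, E^2)$ equal to $(0, -1)$ or $(1, 0)$; but $E^2$ is even, ruling out the first, and $D \cdot E = 2(H \cdot E)$ is even, ruling out the second. Hence $|2H|$ is base point free. Since $2H$ is big, $\phi_{|2H|}$ is then a generically finite morphism; an irreducible curve $C$ is contracted by it precisely when $H \cdot C = 0$, and the union $Z$ of all such curves is a proper closed subset of $F$.

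Next comes birationality of $|3H| = |K_F + D|$ with $D = 3H - K_F$, so $D^2 = 9H^2 \geq 18$. Enlarge $Z$ by adjoining the base locus of $|3H|$ (a proper closed subset, since $|3H| \neq \emptyset$), and pick two distinct points $p, q \in F \setminus Z$. If $|3H|$ failed to separate $p$ and $q$, Theorem \ref{Reider thm}(2) would supply an effective divisor $E$ through $p$ and $q$ with $(D \cdot E, E^2) \in \{(0, -2), (1, -1), (2, 0)\}$. The parity of $E^2$ excludes $(1, -1)$, and $D \cdot E = 3(H \cdot E) \in 3\ZZ$ excludes both $(1, -1)$ and $(2, 0)$; hence $D \cdot E = 0$, that is $H \cdot E = 0$, and since $H$ is nef this forces $H \cdot C = 0$ for every component $C$ of $E$, so $\mathrm{Supp}(E) \subseteq Z$ --- contradicting $p \in \mathrm{Supp}(E) \setminus Z$. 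Thus $|3H|$ separates two general points of $F$, which (since $3H$ is big) means $\phi_{|3H|}$ is birational onto its image.

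I do not anticipate a genuine obstacle. The one point needing a word of care is that ``two general points'' may be chosen off every curve orthogonal to $H$; this is handled above through the generically finite morphism $\phi_{|2H|}$, and alternatively one may cite the standard finiteness of the $(-2)$-curves contracted by a nef and big divisor on a surface. The actual substance is just the parity observation --- $K_F \equiv 0$ makes every self-intersection even --- which is exactly what promotes Reider's hypotheses $D^2 \geq 5$ and $D^2 \geq 10$ to automatic inequalities.
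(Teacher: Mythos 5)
Your proposal is correct and follows essentially the same route as the paper: the parity of self-intersections forced by $K_F\equiv 0$ (so $H^2\geq 2$, and the exceptional cases in Reider's theorem with odd $E^2$ or with $D\cdot E$ not divisible by $m$ are excluded), combined with a direct application of Theorem \ref{Reider thm} to $D=mH-K_F\equiv mH$. The paper compresses this into one line ("follows from Reider's theorem directly"), while you spell out the remaining case $H\cdot E=0$, $E^2=-2$ and why it only obstructs separation of points on the finitely many $H$-trivial curves; that is exactly the intended argument.
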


\begin{proof}
Note that $K_F\equiv 0$ implies that $H^2$ is an even integer and $H^2\geq 2$. The statement follows from  Reider's theorem directly.
\end{proof}

\section{Linear systems on irregular varieties}
In this section, we recall some results on linear systems on irregular varieties developed by J. A. Chen and Hacon \cite{CH2}. Their results are for  irregular varieties of general type, but can be easily generalized to irregular varieties with $K\equiv 0$. For reader's convenience, we give the proof, but it is essentially the same as in \cite{CH2}. 
\begin{prop}[{cf. \cite[Corollary 2.4]{CH2}}] \label{separate1}Let $X$ be a smooth projective variety with  $K_X\equiv 0$, $a: X \to  A=\Alb(X)$ the Albanese map, and  $L$ a nef and big divisor on $X$. 
\begin{enumerate}
\item Let $F$ be  a fiber of $a$ and $x\in F$. If $x \not \in \Bs|L|_F|$, then $x \not \in \Bs|L+a^*P|$ for general $P\in \Pic^0(A)$;

\item Let $F$ be  a fiber of $a$ and $x\in F$. If $x \not \in \Bs|L+a^*P|$ for all $P\in \Pic^0(A)$, then $a_*(\OO_X(L)\otimes \III_x)$ is $IT^0$;

\item Let $x_1, x_2$ be two points on two different  fibers $F_1, F_2$ of $a$ respectively. If $x_i \not \in \Bs|L|_{F_i}|$ for $i=1,2$, and  $x_1 \not \in \Bs| L + a^*P |$ for all $P\in \Pic^0(A)$, then $|L+a^*P|$ separates $x_1,x_2$ for general $P\in \Pic^0(A)$;

\item Let $x_1, x_2$ be two different points on a  fiber $F$ of $a$. If $x_i \not \in \Bs|L|_{F}|$ for $i=1,2$, $| L|_F |$ separates $x_1, x_2$, and $x_1 \not \in \Bs| L + a^*P |$ for all $P\in \Pic^0(A)$, then $|L+a^*P|$ separates $x_1,x_2$ for  general $P\in \Pic^0(A)$.
\end{enumerate}
\end{prop}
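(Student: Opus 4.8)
The plan is to deduce everything from the $IT^0$ machinery (Lemmas \ref{lemma 1}, \ref{nonzero}) together with the fact that $a_*\OO_X(L)$ is a locally free $IT^0$ sheaf (Lemma \ref{nef big IT0}), pushing forward the relevant short exact sequences along $a$ and chasing cohomology on $A$. I will treat the four statements in order, each reducing to the previous ones.

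For \emph{(1)}, the hypothesis $x\notin\Bs|L|_F|$ means the evaluation map $a_*\OO_X(L)\to a_*\OO_F(L|_F)\to\bC(a(x))$ is non-zero (here $a(x)=z\in A$, and $a_*\OO_X(L)\ot\bC(z)\simeq H^0(F,\OO_F(L|_F))$ by base change since $a$ is smooth). Since $a_*\OO_X(L)$ is $IT^0$ by Lemma \ref{nef big IT0}, Lemma \ref{nonzero} gives that $H^0(A,a_*\OO_X(L)\ot P)\to\bC(z)$ is non-zero for general $P\in\Pic^0(A)$; composing with $H^0(X,\OO_X(L)\ot a^*P)\simeq H^0(A,a_*\OO_X(L)\ot P)$ and restricting to $F$ shows $x\notin\Bs|L+a^*P|$ for general $P$. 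For \emph{(2)}, consider the exact sequence
$$
0\to a_*(\OO_X(L)\ot\III_x)\to a_*\OO_X(L)\xrightarrow{\ \mathrm{ev}_x\ }\bC(z)\to R^1a_*(\OO_X(L)\ot\III_x)\to\cdots
$$
By hypothesis $\mathrm{ev}_x$ is surjective on global sections after twisting by every $P\in\Pic^0(A)$; combined with $H^1(A,a_*\OO_X(L)\ot P)=0$ this forces $H^1(A,a_*(\OO_X(L)\ot\III_x)\ot P)=0$ for all $P$, and the higher vanishing $H^i(A,a_*(\OO_X(L)\ot\III_x)\ot P)=0$ for $i\ge2$ follows from the $IT^0$ property of $a_*\OO_X(L)$ and the vanishing of $H^i(\bC(z))$ for $i>0$ (one should be a little careful that $R^1a_*(\OO_X(L)\ot\III_x)$ is supported at the point $z$, so its twists have no higher cohomology). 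Hence $a_*(\OO_X(L)\ot\III_x)$ is $IT^0$.

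For \emph{(3)}: apply (2) to $x_1$ to get that $\GG:=a_*(\OO_X(L)\ot\III_{x_1})$ is $IT^0$. Since $x_2$ lies on a different fiber $F_2$ and $x_2\notin\Bs|L|_{F_2}|$, the composite $\GG\to a_*\OO_X(L)\to\bC(a(x_2))$ is non-zero (the ideal of $x_1$ is trivial near $F_2$). By Lemma \ref{nonzero} applied to $\GG$, the map $H^0(A,\GG\ot P)\to\bC(a(x_2))$ is non-zero for general $P$, i.e.\ there is a section of $|L+a^*P|$ vanishing at $x_1$ but not at $x_2$; together with (1) applied to $x_2$ this separates $x_1$ and $x_2$. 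For \emph{(4)} the argument is identical except that $x_1,x_2$ lie on the same fiber $F$: now $\GG\to\bC(a(x_2))$ need not be induced by the fiber map, but the hypothesis that $|L|_F|$ separates $x_1,x_2$ says exactly that $H^0(F,\OO_F(L|_F)\ot\III_{x_1})\to\bC(x_2)$ is non-zero, and by base change this is precisely the statement that $\GG\to\bC(a(x_2))$ is non-zero; then Lemma \ref{nonzero} and (1) finish as before.

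I expect the main technical obstacle to be \emph{(2)} — specifically, verifying rigorously that the $IT^0$ property passes from $a_*\OO_X(L)$ to the subsheaf $a_*(\OO_X(L)\ot\III_x)$. One must check that the connecting maps in cohomology behave correctly: the surjectivity of $H^0(\mathrm{ev}_x\ot P)$ for \emph{all} $P$ is what kills $H^1$ of the kernel, and one needs the skyscraper $R^1a_*(\OO_X(L)\ot\III_x)$ (together with the fact that $a_*\OO_X(L)$, being locally free, maps onto $\bC(z)$) to rule out contributions to $H^{\ge2}$. Once (2) is in place, (1), (3), (4) are short applications of Lemmas \ref{lemma 1} and \ref{nonzero}; the only extra care needed is the base-change identification $a_*(\OO_X(L)\ot\III_x)\ot\bC(z')\simeq H^0(X_{z'},\OO(L)\ot\III_x)$, which holds because $a$ is smooth and the relevant $h^0$ is constant.
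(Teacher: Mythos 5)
Your proposal is correct and takes essentially the same route as the paper: push forward the ideal-sheaf sequence, use that $a_*\OO_X(L)$ is locally free $IT^0$ (Lemma \ref{nef big IT0}), apply Lemma \ref{nonzero} for (1), (3), (4), and run the cohomology chase with the skyscraper quotient to get (2). One small caveat: your closing justification that $a_*(\OO_X(L)\ot\III_x)\ot\bC(z')\simeq H^0(X_{z'},\OO(L)\ot\III_x)$ ``because the relevant $h^0$ is constant'' is not quite right (that $h^0$ drops at $z=a(x)$), but this identification is not actually needed -- what your argument, like the paper's, really uses is base change for $a_*\OO_X(L)$ itself, factoring the evaluation at $x_2$ through $a_*\OO_X(L)\ot\bC(z)\simeq H^0(F,\OO_F(L))$, where the separation hypothesis gives surjectivity.
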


\begin{proof} Set $z=a(F)\in A$. Note that by the proof of Lemma \ref{nef big IT0},
$$a_*\OO_X(L)\otimes \bC(z)\cong H^0(F,\OO_F(L)).$$
We first look at the exact sequence obtained by evaluating at
$x$:
\begin{align*} 0\to \OO_X(L)\otimes \III_x \to \OO_X(L)\to \bC(x)\to 0.\end{align*}
Pushing forward to $A$, we get
\begin{align} 0 \to a_*(\OO_X(L)\otimes \III_x) \to  a_*\OO_X(L) \to \bC(z) \to \cdots\label{exact2}\end{align}
Since $x\not \in \Bs|L|_F |$ and
$$a_*\OO_X(L)\otimes \bC(z)\cong H^0(F,\OO_F(L)),$$
we know that the induced morphism
$$a_*\OO_X(L) \to a_*\OO_X( L) \otimes \bC(z)  \to \bC(z)$$
is non-zero, whence surjective.  By Lemma \ref{nef big IT0}, $a_*\OO_X(L)$ is $IT^0$. Applying Lemma \ref{nonzero} to  $a_*\OO_X(L)$, we get (1).

To see (2), by tensoring (\ref{exact2}) with $P\in \Pic^0(A)$, we have an exact sequence
$$
0 \to a_*(\OO_X(L)\otimes \III_x)\otimes P \to  a_*\OO_X(L)\otimes P \to \bC(z) \to \cdots
$$
By assumption that $x \not \in \Bs|L+a^*P|$, 
$$
H^0(A, a_*(\OO_X(L)\otimes \III_x)\otimes P) \to  H^0(A, a_*\OO_X(L)\otimes P )
$$
is not surjective. Hence 
 $$H^0(A, a_*\OO_X(L)\otimes P )\to H^0( \bC(z)) $$
is non-zero, whence surjective. So we have an exact sequence  
$$
0 \to a_*(\OO_X(L)\otimes \III_x)\otimes P \to  a_*\OO_X(L)\otimes P \to \bC(z) \to 0.
$$
Taking cohomology, we have $H^i(A, a_*(\OO_X(L)\otimes \III_x)\otimes P) =0$ for $i>0$.

To see (4), we   look  at the exact sequence obtained by evaluating at
$x_2$:
$$0 \to a_*(\OO_X(L)\otimes \III_{x_1,x_2})\to a_*(\OO_X(L)\otimes \III_{x_1})\to \bC(z)\to \cdots$$
The last morphism factors as
$$a_*(\OO_X(L)\otimes \III_{x_1})\to a_*(\OO_X(L)\otimes \III_{x_1})\otimes \bC(z)
\to a_*\OO_X (L)\otimes \bC(z) \to \bC(z).$$
It is obtained by evaluating at $x_2$. The assumption that $|L|_F |$ separates $x_1, x_2$ shows that this is surjective. By applying Lemma \ref{nonzero} to  $ a_*(\OO_X(L)\otimes \III_{x_1})$, we are done.

Finally we consider (3). Assume now that $z=a(x_2)$. Again we have an exact sequence:
$$0 \to a_*(\OO_X(L)\otimes \III_{x_1,x_2})\to a_*(\OO_X(L)\otimes \III_{x_1})\to \bC(z)\to \cdots$$
The last morphism factors as
$$a_*(\OO_X(L)\otimes \III_{x_1})\to a_*(\OO_X(L)\otimes \III_{x_1})\otimes \bC(z)
\to a_*\OO_X (L)\otimes \bC(z) \to \bC(z)$$ 
which is obtained by evaluating at $x_2$. Since $a(x_1) \neq a(x_2)$, it follows that $a_*(\OO_X(L)\otimes \III_{x_1})\otimes \bC(z)\cong a_*(\OO_X(L))\otimes \bC(z)$ and hence the above morphism is surjective. Again, by Lemma \ref{nonzero} we are done. 
\end{proof}

\begin{cor}[{cf. \cite[Theorem 2.8]{CH2}}]\label{separate2} Let $X$ be a smooth projective variety with $K_X\equiv 0$, $a : X \to  A=\Alb(X)$ the Albanese map, $L$ a nef and big divisor, $F$ a  fiber of $a$, and $n\geq 2$ a positive integer. Assume that $\dim X-\dim A\leq 3$. Then
\begin{enumerate}
\item $|nL + a^*P |$ separates two general points on two different  fibers for general $P\in \Pic^0(A)$;

\item $|(n+1)L + a^*P|$ separates two general points on two different  fibers for all $P\in \Pic^0(A)$;

\item If $|nL|_F|$ is birational, then $|nL + a^*P|$ separates two general points on $F$ for general $P\in \Pic^0(A)$;

\item If $|nL|_F|$ is birational, then $|(n+1)L +a^*P|$ separates two general points on $F$ for all $P\in \Pic^0(A)$.
\end{enumerate}
\end{cor}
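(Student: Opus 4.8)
The plan is to deduce all four statements from Proposition \ref{separate1}, applied to the nef and big divisors $nL$ and $(n+1)L$, together with the elementary fact that one may multiply global sections. The first ingredient I would record is a base-point-freeness statement on fibers: since $\dim X-\dim A\leq 3$, Lemma \ref{nef big IT0} applied to $mL$ (which is again nef and big) shows that $a_*\OO_X(mL)$ is a nonzero locally free $IT^0$ sheaf for every $m\geq 1$; in particular $H^0(F,\OO_F(mL))\cong a_*\OO_X(mL)\otimes\bC(a(F))\neq 0$ for every fiber $F$ and every $m\geq 1$, so a general point of a general fiber avoids $\Bs|mL|_F|$.

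The heart of the matter, and the place where the hypothesis $n\geq 2$ is used, is the claim: for a general point $x_1$ on a general fiber $F_1$ one has $x_1\notin\Bs|nL+a^*P|$ for \emph{every} $P\in\Pic^0(A)$. Indeed, by the previous paragraph $x_1$ avoids both $\Bs|L|_{F_1}|$ and $\Bs|(n-1)L|_{F_1}|$, so Proposition \ref{separate1}(1), applied to $L$ and to $(n-1)L$, provides dense open subsets $U_1,U_2\subseteq\Pic^0(A)$ with $x_1\notin\Bs|L+a^*P'|$ for $P'\in U_1$ and $x_1\notin\Bs|(n-1)L+a^*Q'|$ for $Q'\in U_2$. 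Given an arbitrary $P$, the set $\{P'\in U_1: P\otimes (P')^{-1}\in U_2\}$ is again dense open, hence nonempty; choosing $P'$ in it and setting $Q'=P\otimes (P')^{-1}$, the product of a section of $L+a^*P'$ not vanishing at $x_1$ with a section of $(n-1)L+a^*Q'$ not vanishing at $x_1$ is a section of $nL+a^*P$ not vanishing at $x_1$.

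Granting this: for (1), pick general points $x_1\in F_1$, $x_2\in F_2$ on two general fibers; then $x_i\notin\Bs|nL|_{F_i}|$ and $x_1\notin\Bs|nL+a^*P|$ for all $P$, so Proposition \ref{separate1}(3) applied to $nL$ gives that $|nL+a^*P|$ separates $x_1,x_2$ for general $P$. Statement (3) is identical, with $x_1,x_2$ general on a single fiber $F$: the hypothesis that $|nL|_F|$ is birational says precisely that it separates two general points of $F$, and one now invokes Proposition \ref{separate1}(4) instead. Finally (2) and (4) follow from (1) and (3) by the same multiplication device: given an arbitrary $P$, write $P=P'\otimes P''$ with $P'$ general enough that $|nL+a^*P'|$ separates $x_1,x_2$ and $P''$ general enough that neither $x_1$ nor $x_2$ is a base point of $|L+a^*P''|$ (Proposition \ref{separate1}(1)), so that $|L+a^*P''|$ contains a section $t$ vanishing at neither point; multiplying $t$ by the two sections of $nL+a^*P'$ that separate $x_1,x_2$ produces sections of $(n+1)L+a^*P$ separating $x_1,x_2$.

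The argument is essentially formal once Proposition \ref{separate1} is available, so I do not expect a genuine obstacle. The only points needing care are the bookkeeping of the ``general'' loci in $\Pic^0(A)$ (they are dense Zariski-open, and since $P'\mapsto P\otimes(P')^{-1}$ is an automorphism of $\Pic^0(A)$, finite intersections of such translated loci stay dense) and pinning down where the two hypotheses enter: $n\geq 2$ so that $nL=L+(n-1)L$ splits into summands that are still effective on fibers, and $\dim X-\dim A\leq 3$ so that $H^0(F,\OO_F(mL))\neq 0$ by Riemann--Roch.
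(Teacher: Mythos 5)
Your proposal is correct and follows essentially the same route as the paper: nonvanishing of $H^0(F,\OO_F(mL))$ via Lemma \ref{nef big IT0}, Proposition \ref{separate1}(1)(3)(4) applied to $nL$, and the multiplication map $|m_1L+a^*P_1|+|m_2L+a^*P_2|\to|(m_1+m_2)L+a^*(P_1+P_2)|$ with a generic splitting $P=P'\otimes P''$ to pass from ``general $P$'' to ``all $P$''. The only differences are cosmetic: you fix the split $nL=L+(n-1)L$ where the paper works with arbitrary $m_1,m_2\geq 1$, and you make explicit the dense-open bookkeeping in $\Pic^0(A)$ that the paper leaves implicit (note also that your key claim needs no generality of the fiber, which is what parts (3)--(4) require since $F$ there is a fixed fiber).
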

\begin{proof}
By Lemmas \ref{lemma 1} and  \ref{nef big IT0}, $|mL +a^*P|\neq \emptyset$ for all $m\geq 1$ and all $P\in \Pic^0(A)$. For $m_1, m_2\geq 1$,
consider the map
$$|m_1L +a^*P_1|+|m_2L +a^*P_2|\to |(m_1+m_2)L +a^*(P_1 +P_2)|, $$
for $P_1, P_2\in \Pic^0(A)$. If $x \in  F$ is a general point, then $x\not \in \Bs|L|_F|$ and by Proposition \ref{separate1}(1), $x\not \in \Bs|m_iL +a^*P|$ for $i=1,2$ and for general $P\in \Pic^0(A)$. Therefore $x\not \in \Bs|(m_1+m_2)L +a^*P|$ for all $P\in \Pic^0(A)$.

(1) follows from Proposition \ref{separate1}(3) since for two general points $x_1$ and $x_2$ on two different fibers $F_1$ and $F_2$, we have seen that $x_i \not \in \Bs|nL + a^*P|$ for all $P\in \Pic^0(A)$ and so $x_i \not \in \Bs|nL|_{F_i} |$ for $i=1,2$.

(3) follows from Proposition \ref{separate1}(4) since we assumed that $|nL|_F |$ is birational.

(2) and (4) now follow by considering the map
$$|nL +a^*P_1|+|L +a^*P_2| \to  |(n+1)L +a^*(P_1 +P_2)|.$$
Since $|nL +a^*P_1|$ separates $x_1,x_2$ for general $P_1\in \Pic^0(A)$ and $|L + a^*P_2|$ does not vanish along $x_1, x_2$ for general $P_2\in \Pic^0(A)$,  $|(n+1)L +a^*P|$ separates $x_1,x_2$ for all $P\in \Pic^0(A)$. 
\end{proof}
By Corollary \ref{separate2}, we get the main result of this section.
\begin{cor}\label{m4}
Let $X$ be a smooth projective variety  with $K_X\equiv 0$, $a : X \to  A=\Alb(X)$ the Albanese map, $L$ a nef and big divisor on $X$, and $P\in \Pic^0(X)$. Assume that $\dim X-\dim A\leq 2$. Then
\begin{enumerate}
\item $|mL+P|$ gives a birational map for all $m\geq 4$;
\item $|3L+P|$  separates two general points on two different   fibers of $a$;
\item If $|2L|_F|$ is birational on a fiber $F$ of $a$, then $|3L +P|$  separates two general points on $F$.
\end{enumerate}
\end{cor}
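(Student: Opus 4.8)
The statement to prove is Corollary \ref{m4}, which extracts a concrete birationality bound from the machinery of Corollary \ref{separate2} under the hypothesis $\dim X - \dim A \leq 2$.

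\medskip

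The plan is to combine the separation statements of Corollary \ref{separate2} for the specific value $n = 2$ with a reduction to the geometry of the Albanese fibers. First I would note that since $\dim X - \dim A \leq 2$, a general fiber $F$ of $a$ is either a point, a smooth curve with $K_F \equiv 0$ (an elliptic curve), or a smooth surface with $K_F \equiv 0$. In each of these cases I want to understand when $|2L|_F|$ and $|3L|_F|$ are basepoint free or birational. For $\dim F = 0$ there is nothing to check; for $\dim F = 1$, $L|_F$ is a divisor of degree $\geq 1$ on an elliptic curve, so $|2L|_F|$ is already birational (degree $\geq 2$ is very ample on an elliptic curve, or at least separates two general points); for $\dim F = 2$, Corollary \ref{Reider K0} tells us that $|3L|_F|$ gives a birational map, and $|2L|_F|$ is basepoint free.

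\medskip

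For part (1): to show $|mL + P|$ is birational for $m \geq 4$, I would apply Corollary \ref{separate2}(1) and (3) with $n = m-1 \geq 3 \geq 2$. Part (1) of Corollary \ref{separate2} separates two general points on two different fibers (for general $P'$, then part (2) upgrades to all $P$); for two general points on the \emph{same} fiber $F$, I invoke Corollary \ref{separate2}(3),(4), which require $|nL|_F|$ to be birational — and for $n \geq 3$, hence $n \geq 3 > \dim F$ in all relevant cases, this follows because $|3L|_F|$ is birational on surfaces (Corollary \ref{Reider K0}) and $|nL|_F|$ for $n\geq 2$ is birational on curves and trivially on points. Putting the two cases together, $|mL + a^*P|$ separates two general points of $X$, i.e. gives a birational map; note $\Pic^0(X) = \Pic^0(A)$ via $a^*$ since $a$ is the Albanese map. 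Parts (2) and (3) are then literally Corollary \ref{separate2}(2) and (4) specialized to $n = 2$: part (2) is Corollary \ref{separate2}(2) with $n=2$, and part (3), under the stated hypothesis that $|2L|_F|$ is birational, is Corollary \ref{separate2}(4) with $n = 2$.

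\medskip

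I do not expect any genuine obstacle here — this corollary is a bookkeeping consequence of the already-established Corollary \ref{separate2}. The only mildly delicate point is checking that "separates two general points on two different fibers" plus "separates two general points on a single general fiber" together suffice for birationality of the map on all of $X$; this is standard, since a rational map defined by a linear system is birational onto its image as soon as it separates two general points, and a general pair of points of $X$ either lies on one fiber or on two distinct fibers, both of which are general in the appropriate sense. A second small point is ensuring the passage from "general $P$" to "all $P$" in part (1), which is exactly why one uses $m \geq 4$ (so that $n = m - 1 \geq 3$ and one still has room to apply the $(n+1)$-type argument, or more directly Corollary \ref{separate2}(2),(4) as stated). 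I would write the proof as: reduce to the two-point-separation criterion, dispatch the two-different-fibers case via Corollary \ref{separate2}(1)(2), dispatch the same-fiber case via Corollary \ref{separate2}(3)(4) using Corollary \ref{Reider K0} to verify the birationality hypothesis on $F$, and finally record (2) and (3) as direct instances.
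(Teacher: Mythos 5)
Your proposal is correct and follows essentially the same route as the paper: the statements are read off from Corollary \ref{separate2} with $n=m-1\geq 3$ (resp. $n=2$ for parts (2),(3)), once one knows $|nL|_F|$ is birational on the fibers, which have dimension $\leq 2$ and $K_F\equiv 0$. The only cosmetic difference is that the paper quotes Theorem \ref{23}(1) for this fiber birationality, while you invoke Corollary \ref{Reider K0} for surface fibers and elementary facts for elliptic-curve fibers — the same content.
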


\begin{proof}
For a  fiber $F$ of $a$, $F$ is a smooth variety with $K_F\equiv 0$ and $\dim F\leq 2$.  $|mL|_F|$ gives a  birational map for $m\geq 3$ by Theorem \ref{23}(1). Hence the statements follow from Corollary \ref{separate2} directly.
\end{proof}

Similarly, by Theorem \ref{23}(2), we can easily get the following corollary.
\begin{cor}\label{fiber three}
Let $X$ be a smooth projective variety  with $K_X\equiv 0$, $a : X \to  A=\Alb(X)$ the Albanese map, $L$ a nef and big divisor on $X$, and $P\in \Pic^0(X)$. Assume that $\dim X-\dim A=3$. Then
$|mL+P|$ gives a birational map for all $m\geq 6$.
\end{cor}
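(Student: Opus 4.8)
The plan is to imitate the proof of Corollary \ref{m4} almost verbatim, replacing the surface bound of Theorem \ref{23}(1) by the threefold bound of Theorem \ref{23}(2), and then to feed this into the machinery of Corollary \ref{separate2}.

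First I would record the two structural inputs. By Subsection~\ref{subsection 2.1}, every fiber $F$ of the Albanese map $a\colon X\to A$ is a smooth projective variety with $K_F\equiv 0$, and here $\dim F=\dim X-\dim A=3$. Moreover, since $a$ is the Albanese map, pullback $a^*\colon\Pic^0(A)\to\Pic^0(X)$ is an isomorphism, so it is harmless to write $P=a^*P'$ with $P'\in\Pic^0(A)$. If $q(X)=0$ then $A$ is a point, $X=F$, $P=\OO_X$, and the assertion is exactly Theorem \ref{23}(2) (which in fact already gives $m\ge 5$); so I may assume $q(X)\ge 1$.

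Next, applying Theorem \ref{23}(2) to the pair $(F,L|_F)$ shows that $|mL|_F|$ gives a birational map for every $m\ge 5$. The hypothesis $\dim X-\dim A\le 3$ of Corollary \ref{separate2} holds here with equality. For $m\ge 6$ put $n=m-1\ge 5\ge 2$; then $|nL|_F|$ is birational, so Corollary \ref{separate2}(2) gives that $|mL+a^*P'|=|(n+1)L+a^*P'|$ separates two general points lying on two different fibers of $a$, for all $P'\in\Pic^0(A)$ (and Corollary \ref{separate2}(4) gives the analogous statement for two general points on a general fiber, though this is not needed below). Since $q(X)\ge 1$, two general points $x_1,x_2$ of $X$ satisfy $a(x_1)\ne a(x_2)$, so they lie on two distinct fibers and each is a general point of the fiber containing it; hence $|mL+P|$ separates $x_1$ and $x_2$. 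Therefore the general fiber of $\phi_{|mL+P|}$ is a single point, i.e. $\phi_{|mL+P|}$ is birational onto its image.

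I expect no genuine obstacle here: the statement is a formal consequence of Theorem \ref{23}(2) and the results of Section~3. The only point worth flagging is the bookkeeping behind the bound. The value $6=5+1$ is forced because passing from ``for general $P$'' (available at twist level $n=5$ via Corollary \ref{separate2}(1),(3)) to ``for all $P$'' costs one extra copy of $L$; with this method one cannot expect to do better than $m\ge 6$ when the Albanese fibers are threefolds, in contrast with the sharper bounds $m\ge 4$ and $m\ge 3$ obtained in Corollary \ref{m4} for fibers of dimension $\le 2$.
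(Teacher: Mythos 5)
Your proposal is correct and follows essentially the same route as the paper, whose proof of this corollary is exactly the observation that Theorem \ref{23}(2) makes $|nL|_F|$ birational on the $3$-dimensional Albanese fibers for $n\geq 5$, after which Corollary \ref{separate2} yields birationality of $|mL+P|$ for $m\geq 6$. Your extra bookkeeping (treating $q(X)=0$ separately and noting that for $q(X)\geq 1$ two general points lie on distinct fibers, so Corollary \ref{separate2}(2) suffices) is a harmless refinement of the same argument.
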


\begin{remark}
By Corollary \ref{m4}, to prove Theorem \ref{main}, we only need to prove that   $|3L+P|$ separates two general points on a general fiber $F$ of $a$ on which $|2L|_F|$ does not give a birational map, and we only need to consider the cases that $q(X)=1$ or $2$, i.e., the Albanese fiber dimension is $2$ or $1$.
\end{remark}

\section{Irregular varieties of Albanese fiber dimension one}
In this section, we consider irregular varieties with $K\equiv 0$ and of Albanese fiber dimension one. We prove the following theorem.
\begin{thm}\label{fiber one}
Let $X$ be a smooth projective variety  with $K_X\equiv 0$ and $q(X)=\dim X-1$, $L$   a nef and big divisor on $X$.  Then
$|3L+P|$ gives a birational map for all $P\in \Pic^0(X)$.
\end{thm}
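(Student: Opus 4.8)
The Albanese map $a\colon X\to A=\Alb(X)$ is, by Subsection \ref{subsection 2.1}, an \'etale fiber bundle of relative dimension $\dim X-q(X)=1$, so a general fiber $F$ of $a$ is a smooth curve with $K_F\equiv0$, i.e.\ an elliptic curve, and $L|_F$ is nef and big, hence ample, of degree $d=L\cdot F\geq1$. By Corollary \ref{m4}(2), $|3L+P|$ already separates two general points lying on two different fibers of $a$, for every $P$; so it suffices to show that $|3L+P|$ separates a general point of a general fiber $F$ from every other point of $F$, for every $P$. If $d\geq2$ this is Corollary \ref{m4}(3), because then $|2L|_F|$ has degree $2d\geq4$ on the elliptic curve $F$ and hence is very ample. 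So the essential case is $d=1$ — exactly the one left open by Corollary \ref{m4} — and for it I would prove the sharper statement that $a_*\OO_X(3L)\otimes P$ is a globally generated sheaf on $A$ for every $P\in\Pic^0(A)$. Granting this, for each $z\in A$ the evaluation map
\[
H^0(X,3L+a^*P)\longrightarrow \bigl(a_*\OO_X(3L)\otimes P\bigr)\otimes\bC(z)\cong H^0(F_z,3L|_{F_z})
\]
is surjective, so $|3L+a^*P|$ cuts out on $F_z$ the complete linear system $|3L|_{F_z}|$, which is very ample on the elliptic curve $F_z$ (degree $3$); together with Corollary \ref{m4}(2) this gives the birationality of $|3L+P|$.

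To control $a_*\OO_X(3L)$ when $d=1$, note first that $\mathcal{M}:=a_*\OO_X(L)$ is a line bundle ($\rank=h^0(F,L|_F)=1$) and is $IT^0$ by Lemma \ref{nef big IT0}; an $IT^0$ line bundle on an abelian variety is ample (Mumford's index theorem, the higher-dimensional counterpart of Lemma \ref{ample=IT0}(1), since $\chi(\mathcal{M})=h^0(\mathcal{M})>0$). The tautological map $a^*\mathcal{M}\to\OO_X(L)$ is a nonzero map of line bundles, so it is injective with cokernel along an effective divisor $E$ with $\OO_X(L)\cong a^*\mathcal{M}\otimes\OO_X(E)$; restricting to a general fiber, $a^*\mathcal{M}|_F\cong\OO_F\to\OO_F(L|_F)$ is a nonzero section of a degree-one line bundle, so $E\cdot F=1$ and $E$ is horizontal (the canonical section of $\OO_X(E)$ is nonzero on every fiber, so $E$ has no vertical component). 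Then $a|_E\colon E\to A$ is finite, and — by miracle flatness, $E$ being a Cartier divisor in the smooth $X$ — flat of degree $1$, hence an isomorphism. Thus $E$ is a section of $a$, $E\cong A$, and its normal bundle $N:=\OO_X(E)|_E$ is numerically trivial: after the \'etale base change $\pi\colon B\to A$ trivializing the bundle, $E$ becomes a section of $F\times B\to B$, whose normal bundle is $g^*T_F\cong\OO_B$ (as $T_F\cong\OO_F$), so $\pi^*N\cong\OO_B$ and $N\in\Pic^0(A)$.

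Now $a_*\OO_X(3L)=\mathcal{M}^{\otimes3}\otimes a_*\OO_X(3E)$. Pushing forward the filtration $\OO_X\subset\OO_X(E)\subset\OO_X(2E)\subset\OO_X(3E)$, and using that $R^1a_*\OO_X(kE)=0$ for $k\geq1$ (since $\OO_X(kE)|_{F_z}$ has positive degree on the genus-one fiber), that $a_*\OO_X(E)=\mathcal{M}\otimes\mathcal{M}^{-1}=\OO_A$ (projection formula), and that $\OO_E(kE)=N^{\otimes k}$ with $a_*\OO_E(kE)\cong N^{\otimes k}$ under $E\cong A$, one obtains a filtration of $a_*\OO_X(3E)$ with graded pieces $\OO_A,\ N^{\otimes2},\ N^{\otimes3}$. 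Hence $a_*\OO_X(3L)\otimes P$ is an iterated extension of the line bundles $\mathcal{M}^{\otimes3}\otimes N^{\otimes j}\otimes P$ ($j=0,2,3$), each of which, being a $\Pic^0$-translate of $\mathcal{M}^{\otimes3}$ with $\mathcal{M}$ ample, is very ample on $A$ — in particular globally generated and with vanishing higher cohomology. A standard d\'evissage (an extension $0\to\mathcal{A}\to\mathcal{B}\to\mathcal{C}\to0$ of locally free sheaves with $\mathcal{A},\mathcal{C}$ globally generated and $H^1(A,\mathcal{A})=0$ has $\mathcal{B}$ globally generated) then yields, by induction along the filtration, that $a_*\OO_X(3L)\otimes P$ is globally generated, as wanted.

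The crux is the case $d=1$, and within it the identification of $E$ as a section of $a$ with numerically trivial normal bundle: this is where the rigidity of the \'etale-bundle structure is genuinely used, and it is what promotes the positivity of $\mathcal{M}^{\otimes3}$ — which lives on $A$ and is easy to handle — to global generation of the whole of $a_*\OO_X(3L)\otimes P$. I expect the remaining work to be essentially bookkeeping: checking that $E$ is horizontal, identifying the graded pieces of the filtration, and the elementary d\'evissage. By contrast, knowing only that $a_*\OO_X(3L)$ is $IT^0$ — which is immediate from Lemma \ref{nef big IT0} — does not control $|3L+P|$ along the fibers once $\dim A\geq2$, so some finer analysis of this kind seems unavoidable.
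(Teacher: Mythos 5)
Your proposal is correct in substance and follows the paper's overall architecture — reduce via Corollary \ref{m4}(2)(3) to the case $L\cdot F=1$, then show that $a_*\OO_X(3L)\otimes P$ is globally generated so that $|3L+a^*P|$ cuts out the complete, very ample system $|3L|_{F_z}|$ on the fibers — but you prove the global generation (the content of the paper's Lemma \ref{L-F nef}) by a genuinely different route. The paper writes $L\sim a^*D+E$ with $\OO_A(D)=a_*\OO_X(L)$ ample, proves $E$ is nef using the cone theorem, and then combines Kawamata--Viehweg vanishing with Pareschi's global generation criterion on abelian varieties. You instead identify $E$ geometrically as a section of $a$ with torsion normal bundle $N\in\Pic^0(A)$, exhibit $a_*\OO_X(3L)\otimes P$ as an iterated extension of the ample line bundles $\mathcal{M}^{\otimes 3}\otimes N^{\otimes j}\otimes P$ ($j=0,2,3$), and conclude by elementary d\'evissage. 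Your argument avoids both Pareschi's theorem and the cone theorem and gives more explicit structural information, but it is tailored to Albanese fiber dimension one; the paper's Lemma \ref{L-F nef} works for any fiber dimension with $h^0(F,\OO_F(L))=1$ and any $m\geq 2$, and is reused later (abelian surface fibers with $H^2=2$, and the fourfold theorem).

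One step you assert rather than prove, and it is the only place where care is really needed: the finiteness of $a|_E$. ``No vertical component'' is not sufficient once $\dim A\geq 2$, since a horizontal component of $E$ could still contain an entire fiber, making $a|_E$ non-finite and wrecking the section/normal-bundle analysis; what you actually need is your parenthetical claim that the canonical section of $\OO_X(E)$ is nonzero on every fiber. This is true, but requires the following short argument rather than being self-evident: $\deg\bigl(\OO_X(E)|_{F_z}\bigr)=E\cdot F_z=1$ for every $z$, so $h^0(F_z,\OO_X(E)|_{F_z})=1$ is constant and cohomology and base change applies; by the projection formula $a_*\OO_X(E)\cong\OO_A$, under which the canonical section corresponds to a nonzero constant, hence a nowhere-vanishing section of $a_*\OO_X(E)$, and by base change its value at $z$ is the restriction of the canonical section to $F_z$. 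Thus no fiber lies in $E$, $a|_E$ is quasi-finite and proper, hence finite, and your miracle-flatness and d\'evissage arguments then go through as written.
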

\begin{proof}
Every fiber of the Albanese map $a: X\to A=\Alb(X)$ is an elliptic curve. By Corollary \ref{m4}(2), we only need to prove that  $|3L+P|$ separates two general points on a  fiber $F$ of $a$. Also we may assume that $L\cdot F=1$ otherwise  $|2L|_F|$ gives an embedding on $F$ and we are done by Corollary \ref{m4}(3). 

By Lemma \ref{L-F nef} below, $a_*\OO_X(3L)\otimes P$ is generated by global sections for all $P\in \Pic^0(A)$. 
Hence 
\begin{align*}
H^0(X, \OO_X(3L)\otimes a^*P){}&\simeq H^0(A, a_*\OO_X(3L)\otimes P)\\
{}&\to a_*\OO_X(3L)\otimes P\otimes \CC(z)\simeq H^0(F, \OO_F(3L))
\end{align*}
is surjective where $z=a(F)\in A$. Since  $|3L|_F|$ gives an embedding on $F$, $|3L+a^*P|$ separates two points on $F$. We complete the proof.
\end{proof}

\begin{lem}\label{L-F nef}
Let $X$ be a smooth projective variety  with $K_X\equiv 0$, $a : X \to  A=\Alb(X)$ the Albanese map, and $L$ a nef and big divisor on $X$. For a general fiber $F$ of $a$, assume that $h^0(F, \OO_F(L))=1$. Then
$a_*\OO_X(mL)\otimes P$ is generated by global sections for all $P\in \Pic^0(A)$ and all $m\geq 2$.
\end{lem}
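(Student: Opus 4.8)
Write $N:=a_*\OO_X(L)$. \emph{First} I would establish that $N$ is an ample line bundle: by Lemma~\ref{nef big IT0} it is a locally free $IT^0$ sheaf, its rank equals $h^0(F,\OO_F(L))=1$ so it is a line bundle, and an $IT^0$ line bundle on an abelian variety is automatically ample (otherwise some $H^i(N\otimes P)$ with $i>0$ would be nonzero for some $P$). Two consequences. Since $\Pic^0(A)$ is divisible, writing $P=mQ$ gives $N^{\otimes m}\otimes P=(N\otimes Q)^{\otimes m}$, which for $m\geq 2$ is globally generated by Lefschetz's theorem on abelian varieties. And the evaluation map $a^*N\to\OO_X(L)$ is a nonzero map of line bundles on the integral variety $X$, hence injective, so $L\sim a^*N+E$ with $E$ effective, and $E$ meets a general fibre $F_z$ in the unique member of $|L|_{F_z}|$.

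\emph{Next} I would reduce the assertion to a fibrewise statement. Since $a_*\OO_X(mL)$ is locally free and $IT^0$, the argument of Proposition~\ref{separate1}(2) (Leray together with relative Kawamata--Viehweg vanishing to kill the higher direct images involved) shows that $a_*\OO_X(mL)\otimes P$ is globally generated for all $P$ if and only if the restriction $H^0(X,\OO_X(mL)\otimes a^*P)\to H^0(F_z,\OO_{F_z}(mL))$ is surjective for every $z$ and every $P$. To produce these sections I would filter $a_*\OO_X(mL)$ using multiplication by the section cutting out $E$:
$$N^{\otimes m}=a_*\OO_X(m\,a^*N)\hookrightarrow a_*\OO_X(L+(m-1)a^*N)\hookrightarrow\cdots\hookrightarrow a_*\OO_X(mL),$$
that is, the filtration $G_k:=a_*\OO_X\big(kL+(m-k)a^*N\big)$ for $0\leq k\leq m$. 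For $k\geq 1$ the class $kL+(m-k)a^*N$ is big and nef (the summand $kL$ is big and nef and $(m-k)a^*N$ is nef), so by Kawamata--Viehweg every $G_k$ is $IT^0$; hence each graded piece $G_{k+1}/G_k$ is $IT^0$ as well, and is in fact a pushforward of a line bundle from the divisor $E$, with fibre $H^0(E\cap F_z,(k+1)L|_{E\cap F_z})$ over a general $z$.

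\emph{Finally} one climbs the filtration. The bottom term $G_0\otimes P=N^{\otimes m}\otimes P$ is globally generated by the first step. Applying the snake lemma to $0\to G_k\otimes P\to G_{k+1}\otimes P\to (G_{k+1}/G_k)\otimes P\to 0$, together with the fibrewise evaluation sequences, and using that $G_k$ is $IT^0$ (so that the associated row of $H^0$'s is exact) and that the graded pieces are locally free over a general $z$, global generation of $G_k\otimes P$ at $z$ propagates to $G_{k+1}\otimes P$ at $z$ \emph{provided $(G_{k+1}/G_k)\otimes P$ is globally generated at $z$}. This last implication is where the real content lies and is the step I expect to be hardest: $IT^0$-ness does not by itself imply global generation, so one must return to the fibre and exploit the positivity of $L|_{F_z}$ along $E\cap F_z$ --- for $\dim F\leq 2$ this follows from Reider's theorem (Corollary~\ref{Reider K0}, giving that $|2L|_F|$ is base-point-free), and in general from a Chen--Hacon--type lifting argument for the pair $(X,E)$, or, most cleanly, by passing to the \'etale trivialization $X\times_A B\cong F\times B$ of the Albanese map and treating the product case with a K\"unneth-type analysis of $\OO_{F\times B}(mL)$. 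Since only general fibres are needed in the application (Theorem~\ref{fiber one}), it suffices to carry this out over a dense open subset of $A$.
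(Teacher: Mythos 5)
There is a genuine gap: your argument stops exactly where the real content of the lemma lies, and you say so yourself. The paper's proof is short because it has the right tool, Pareschi's theorem \cite[Theorem 2.1]{P}: if the twist $a_*\OO_X(mL)\otimes\OO_A(-D)$ (where $\OO_A(D)=a_*\OO_X(L)$ is the ample line bundle you call $N$) is $IT^0$, then $a_*\OO_X(mL)\otimes P$ is globally generated for every $P\in\Pic^0(A)$. Your remark that ``$IT^0$-ness does not by itself imply global generation'' is precisely the point you are missing in its correct form: $IT^0$ \emph{after twisting down by an ample line bundle} does imply global generation on an abelian variety, and that is all the lemma needs. To apply this one must know that $mL-a^*D$ is nef and big for $m\geq 2$, i.e.\ that $E\sim L-a^*D$ is nef; the paper proves this by the cone theorem (any $E$-negative rational curve would be contracted by $a$, forcing $E\cdot C=L\cdot C\geq 0$). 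Your proposal introduces the same $E$ but never addresses its nefness, and your substitute strategies for the hard step do not close the hole: Reider only covers $\dim F\leq 2$, the ``Chen--Hacon--type lifting for $(X,E)$'' is not an argument, and the K\"unneth reduction on the \'etale trivialization $X\times_AB\cong F\times B$ fails in general because $L$ need not split as a box product unless $q(F)=0$ --- whereas in the actual applications of this lemma $F$ is an elliptic curve or an abelian surface, so $q(F)>0$.

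Two further points. First, your filtration $G_k=a_*\OO_X(kL+(m-k)a^*N)$ needs relative vanishing ($R^1a_*$ of the subsheaf) to identify the graded pieces with pushforwards from $E$, and at the bottom step $k=0$ this $R^1a_*$ does not vanish; more seriously, global generation of the graded pieces $(G_{k+1}/G_k)\otimes P$ is asserted, not proved, and it is not easier than the original statement. Second, your closing reduction to ``a dense open subset of $A$'' proves a weaker statement than the lemma, which asserts global generation of $a_*\OO_X(mL)\otimes P$ (hence at every point of $A$); the paper's route via Kawamata--Viehweg vanishing plus Pareschi gives this with no genericity restriction.
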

\begin{proof}
By the assumption $h^0(F, \OO_F(L))=1$ and the proof of Lemma \ref{nef big IT0}, $a_*\OO_X(L)$ is an $IT^0$ line bundle on $A$. We may write $a_*\OO_X(L)=\OO_A(D)$ where $D$ is a divisor on $A$. By Lemma \ref{lemma 1},
$
h^0(A, \OO_A(D)\otimes P)>0
$
for all $P\in \Pic^0(A)$, and hence $D$ is an ample divisor.

Since
$$
h^0(X, \OO_X(L-a^*D))=h^0(A, a_*\OO_X(L)\otimes\OO_A(-D))=1,
$$
$L-a^*D$ is an effective divisor on $X$. Write $L-a^*D\sim E\geq 0$, $(X, tE)$ is klt for a sufficiently small $t>0$. Assume that $E$ is not nef, by Cone Theorem (cf. \cite{KMM}), there exists a rational curve $C$ such that $E.C<0$. Since $A$ is an abelian variety, $C$ must be contracted by $a$ and then $E\cdot C=L\cdot C\geq 0$, a contradiction. Hence $L-a^*D$ is nef and $mL-a^*D$ is nef and big for $m\geq 2$. By Kawamata--Viehweg vanishing theorem, 
$$
R^ia_*\OO_X(mL-a^*D)=0
$$
for $i>0$ and 
$$
H^i(A, a_*\OO_X(mL)\otimes \OO_A(-D)\otimes P)\simeq H^i(X, \OO_X(mL-a^*D)\otimes a^*P)=0
$$
for $i>0$ and all $P\in \Pic^0(A)$. By \cite[Theorem 2.1]{P},  $a_*\OO_X(mL)\otimes P$ is generated by its global sections for all $P\in \Pic^0(A)$. 
\end{proof}

\section{Irregular threefolds of Albanese fiber dimension two}
In this section, we consider irregular $3$-folds with $K\equiv 0$ and of Albanese fiber dimension two. We prove the following theorem.
\begin{thm}\label{fiber two}
Let $X$ be a smooth $3$-fold  with $K_X\equiv 0$ and $q(X)=1$, $L$ a nef and big divisor on $X$. Then
$|3L+P|$ gives a birational map for all $P\in \Pic^0(X)$.
\end{thm}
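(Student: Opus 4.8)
The plan is to reduce, by the results of Section 3, to a problem about linear systems on the general Albanese fibre $F$ — a minimal surface of Kodaira dimension $0$ — and then settle it by an explicit study of the projective models of such surfaces. Let $a\colon X\to A=\Alb(X)$ be the Albanese map; since $q(X)=1$, $A$ is an elliptic curve, and by \S\ref{subsection 2.1} a general fibre $F$ is a smooth minimal surface with $K_F\equiv 0$, i.e.\ a K3, Enriques, abelian, or bielliptic surface. Put $H:=L|_F$: it is nef and big, $H^2$ is an even integer $\ge 2$, and by Corollary \ref{Reider K0} the system $|2H|$ is base point free and $|3H|$ is birational. By Corollary \ref{m4}(2), $|3L+P|$ separates two general points of two different fibres of $a$ for all $P$, so it remains to prove that $|3L+P|$ separates two general points of a general fibre $F$ for every $P\in\Pic^0(A)$. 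By Corollary \ref{m4}(3) we may assume $|2H|$ is not birational, and by Lemma \ref{L-F nef} we may assume $h^0(F,\OO_F(H))\ge 2$ — otherwise $a_*\OO_X(3L)\otimes P$ is globally generated, the restriction $H^0(X,\OO_X(3L+a^*P))\to H^0(F,\OO_F(3H))$ is surjective, and $|3H|$ is birational.

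For $P\in\Pic^0(A)$ let $V_P\subseteq H^0(F,\OO_F(3H))$ be the image of the restriction of $H^0(X,\OO_X(3L+a^*P))$ to the general fibre $F$; the goal is that $\phi_{V_P}$ gives a birational map for every $P$. Set $\mathcal{E}_m:=a_*\OO_X(mL)$, a locally free $IT^0$ sheaf on $A$ with $\mathcal{E}_m\otimes\bC(z)\cong H^0(F,\OO_F(mH))$ for $F=X_z$ (Lemma \ref{nef big IT0}), and let $\mathcal{W}\subseteq\mathcal{E}_3$ be the image of the multiplication map $\mathcal{E}_2\otimes\mathcal{E}_1\to\mathcal{E}_3$. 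Over a general fibre, $\mathcal{W}\otimes\bC(z)$ injects into $H^0(F,\OO_F(3H))$ with image the subspace $W$ spanned by products of a section of $|2H|$ with a section of $|H|$; and for any pair of general points $x_1,x_2\in F$ that $|2H|$ already separates, the multiplication trick of Corollary \ref{separate2} (using $|3L+a^*P|\supseteq|2L+a^*P_1|+|L+a^*P_2|$ together with Proposition \ref{separate1}) shows that $|3L+a^*P|$ separates $x_1,x_2$ for every $P$. The trouble is with the pairs $x_1,x_2$ that $\phi_{|2H|}$ does \emph{not} separate, which form a positive-dimensional family precisely because $|2H|$ is not birational: for these the product sections are useless — indeed $\phi_W$ itself need not be birational, e.g.\ on a degree-$2$ K3 surface $W$ is the pull-back of the space of cubic forms on $\bP^2$ and $\phi_W$ is two-to-one. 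So we must exhibit sections of $|3L+a^*P|$ that restrict to $F$ \emph{outside} $W$.

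The first step lifts such a section. As $\mathcal{E}_2\otimes\mathcal{E}_1$ is a tensor product of ample vector bundles on an elliptic curve it is ample, hence $IT^0$, so its quotient $\mathcal{W}$ is $IT^0$, and hence so is the quotient $\mathcal{E}_3/\mathcal{W}$ of the $IT^0$ sheaf $\mathcal{E}_3$, by Lemma \ref{ample=IT0}. If $\mathcal{E}_3/\mathcal{W}$ is supported in dimension $0$, then $W=H^0(F,\OO_F(3H))$ for a general fibre and $V_P=H^0(F,\OO_F(3H))$, so $\phi_{V_P}=\phi_{|3H|}$ is birational and we are done. Otherwise $\mathcal{E}_3/\mathcal{W}$ is a non-zero $IT^0$ sheaf, so $H^0(A,(\mathcal{E}_3/\mathcal{W})\otimes P)\neq 0$ for all $P$ by Lemma \ref{lemma 1}; since $\mathcal{W}$ is $IT^0$, the exact sequence $0\to\mathcal{W}\to\mathcal{E}_3\to\mathcal{E}_3/\mathcal{W}\to 0$ gives a surjection $H^0(A,\mathcal{E}_3\otimes P)\twoheadrightarrow H^0(A,(\mathcal{E}_3/\mathcal{W})\otimes P)$, and Lemma \ref{nonzero} lets us choose the lift non-zero at the general point $z=a(F)$. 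Together with the product trick to pass from general $P$ to all $P$, this shows: for a general fibre $F$ and every $P$, the space $V_P$ contains a section of $|3H|$ not lying in $W$.

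The second step — the heart of the matter — is to show that these extra sections force $\phi_{V_P}$ to be birational. For this one needs a good understanding of the projective models of minimal surfaces with $K\equiv 0$: using Saint-Donat's and Reid's descriptions of linear systems on K3 surfaces \cite{SD,Reid}, Cossec's on Enriques surfaces \cite{Cos}, and \cite{D,Oh} for bielliptic and abelian surfaces, one proves a key lemma to the effect that, apart from an explicit finite list of exceptional surfaces, whenever $|2H|$ fails to be birational its failure is governed by a degree-two morphism, with respect to which $W$ is the invariant part of $H^0(3H)$ and a section outside $W$ together with $W$ already recovers enough of $|3H|$ to separate two general points of $F$ (for degree-$2$ K3 surfaces, where $W$ has codimension one, a single such section spans all of $H^0(3H)$). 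The finitely many exceptional surfaces are dealt with by hand, or shown not to arise as Albanese fibres. Combining the two steps gives that $\phi_{V_P}$ is birational on the general fibre for all $P$, hence $|3L+P|$ gives a birational map for all $P\in\Pic^0(X)$. The main obstacle is precisely this second step: one must classify, uniformly over all four classes of minimal $\kappa=0$ surfaces and all nef and big $H$, exactly when $\phi_{|2H|}$ fails to be birational, identify the subspace $W$, and dispatch the genuinely exceptional cases.
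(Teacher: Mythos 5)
Your overall architecture is the same as the paper's: reduce via Section 3 to separating two general points on a general Albanese fibre $F$ with $H=L|_F$, dispose of $h^0(F,\OO_F(H))=1$ by Lemma \ref{L-F nef}, lift a section of $|3L+a^*P|$ whose restriction to $F$ lies outside the image $W$ of $m_{12}\colon H^0(H)\otimes H^0(2H)\to H^0(3H)$ by comparing $a_*\OO_X(3L)$ with the image of $a_*\OO_X(L)\otimes a_*\OO_X(2L)$ and using $IT^0$/ampleness on the elliptic curve, and then verify fibrewise that such a section separates the points identified by $\phi_{|2H|}$. The genuine gap is in the second step, which you only assert, and whose asserted form is false for bielliptic surfaces. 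There, pulling back to the covering $A\times B$, one finds sections of $H^0(F,\OO_F(3H))$ coming from $H^0(F,\OO_F(H+iK_F))\cdot H^0(F,\OO_F(2H-iK_F))$ with $i\neq 0$ ($K_F$ torsion of order $r\geq 2$): these need not lie in $\Image(m_{12})$, yet on each fibre $B_0$ of $\Phi$ they restrict to combinations of the non-separating sections of $|3H|_{B_0}|$, so they do \emph{not} separate two points in a general fibre of $\phi_{|2H|}$. Hence the section produced by your lifting step, known only to lie outside $W$, may be useless; your description of $W$ as ``the invariant part of $H^0(3H)$'' actually identifies the non-separating subspace with $\Image(m'_{12})$, where $m'_{12}=\bigoplus_{i=0}^{r-1}H^0(H+iK_F)\otimes H^0(2H-iK_F)\to H^0(3H)$, not with $\Image(m_{12})$. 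The paper's Propositions \ref{separate biell} and \ref{key lemma biell} repair exactly this by replacing the multiplication map, both fibrewise and at the sheaf level ($\bigoplus_i a_*\OO_X(L+iK_X)\otimes a_*\OO_X(2L-iK_X)\to a_*\OO_X(3L)$); without this modification the bielliptic case is not covered. Moreover the K3, Enriques and abelian verifications (Propositions \ref{separate K3}, \ref{separate Enriques}, \ref{separate ab}), which are the bulk of the paper's Section 5, are in your write-up a promise rather than a proof; there is also no ``finite list of exceptional surfaces'' beyond the abelian surface with $H^2=2$, which you have already absorbed into the $h^0(H)=1$ reduction.

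Two smaller points in your lifting step. First, to get, for an \emph{arbitrary fixed} $P$, a section of $(\mathcal{E}_3/\mathcal{W})\otimes P$ that is non-zero at the general point $z=a(F)$, you cannot use Lemma \ref{nonzero} (which only treats general $P$) and then ``pass from general $P$ to all $P$ by the product trick'', since adding $|L+a^*P_2|$ raises the level from $3L$ to $4L$. The paper instead quotients $\QQ=\mathcal{E}_3/\mathcal{W}$ by its torsion, applies Lemma \ref{lemma 1} (valid for all $P$) to the locally free $IT^0$ quotient, uses that a non-zero section of a locally free sheaf on a curve vanishes at only finitely many points, and lifts back since $H^1$ of a torsion sheaf on a curve vanishes; you should do the same. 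Second, in the branch where $\mathcal{E}_3/\mathcal{W}$ is torsion you conclude $V_P=H^0(F,\OO_F(3H))$, but an $IT^0$ (equivalently ample) locally free sheaf on an elliptic curve need not be globally generated, and evaluation of its space of sections at a point--even a general one--need not be surjective (e.g.\ an indecomposable ample bundle of degree $1$), so this conclusion is unjustified; the correct route is to show, via the fibrewise analysis, that $\Image(m_{12})$ (resp.\ $\Image(m'_{12})$) is never all of $H^0(3H)$ when $|2H|$ is not birational, which again is part of the missing case-by-case work.
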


Denote by $a: X\to C=\Alb(X)$ the Albanese map where $C$ is an elliptic curve. A fiber $F$ of $a$ is a surface with $K_F\equiv 0$, whence an abelian surface, a bielliptic surface, a K3 surface, or an Enriques surface. Write  $H:=L|_F$. By Corollary \ref{m4}, we need to consider the case when $|2H|$ does not give a birational map on $F$. Recall that $|3H|$ always gives a birational map.
\subsection{Key lemma}The following is the key lemma of this section.
\begin{lem}\label{key lemma}
Keep the notation as above. Fix  $P\in \Pic^0(C)$ and fix a general fiber $F$, assume that
$|2H|$  gives a generically finite morphism $\phi_{|2H|}$ of degree $2$, and every section in 
$$H^0(F, \OO_F(3H))\backslash \Image (m_{12})$$
separates  two points lying in one general fiber of $\phi_{|2H|}$, where $m_{12}$ is the multiplication map of sections
$$
m_{12}:H^0(F, \OO_F(H))\otimes H^0(F, \OO_F(2H))\to H^0(F, \OO_F(3H)).
$$
Then $|3L+a^*P|$ separates  two  general points on $F$.
\end{lem}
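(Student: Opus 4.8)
The goal is to show that the image $V$ of the restriction map $H^0(X,\OO_X(3L+a^*P))\to H^0(F,\OO_F(3H))$ separates two general points of $F$: the sections of $|3L+a^*P|$ restrict into $V$, and a general point of $F$ lies off $\Bs|3L+a^*P|$ by the argument in the proof of Corollary~\ref{separate2}. Since $F$ is a minimal surface with $K_F\equiv 0$ (hence not uniruled) and $\phi_{|2H|}$ is generically finite of degree $2$, the attached birational involution is in fact a biregular involution $\iota$ of $F$ with $\phi_{|2H|}\circ\iota=\phi_{|2H|}$. Two general points of $F$ lie in distinct fibres of $\phi_{|2H|}$, so it suffices to show that $V$ gives a birational map on $F$, and hence — $\phi_{V}$ being a linear projection of the birational map $\phi_{|3H|}$ (Corollary~\ref{Reider K0}) — that: (a) $V$ separates two general points lying in distinct fibres of $\phi_{|2H|}$, and (b) $V$ separates $x$ from $\iota(x)$ for general $x\in F$.

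For (a), let $x_1,x_2\in F$ be general; they lie in distinct fibres of $\phi_{|2H|}$, and as $|2H|=|2L|_F|$ is base point free (Corollary~\ref{Reider K0}) it separates them. Proposition~\ref{separate1}(4) applied to the divisor $2L$ — its hypotheses hold because a general point of $F$ lies off $\Bs|2L+a^*Q|$ for every $Q\in\Pic^0(C)$ — gives that $|2L+a^*Q|$ separates $x_1,x_2$ for general $Q$; shrinking the admissible $Q$ we may also assume $x_2\notin\Bs|L+a^*(P-Q)|$ (Proposition~\ref{separate1}(1)). The product of a section of $|2L+a^*Q|$ vanishing at $x_1$ but not $x_2$ with one of $|L+a^*(P-Q)|$ not vanishing at $x_2$ is a section of $|3L+a^*P|$ whose restriction to $F$ vanishes at $x_1$ but not $x_2$; thus $V$ separates $x_1,x_2$, and this restriction lies in $\Image(m_{12})$. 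Running the same construction with a general $x$ and any $x'$ not $\iota$-conjugate to $x$ (such a pair again lies in distinct fibres of $\phi_{|2H|}$) shows $\phi_V(x)\ne\phi_V(x')$; so $\phi_V$ can only fail to be birational by identifying a general $x$ with $\iota(x)$, i.e.\ precisely when (b) fails.

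For (b) we must produce $s\in V$ whose restriction to $F$ has non-$\iota$-invariant divisor; by hypothesis~(ii) it suffices to find $s\in V\setminus\Image(m_{12})$. Put $\mathcal E_m=a_*\OO_X(mL)$, a locally free $IT^0$ — hence ample (Lemma~\ref{ample=IT0}) — sheaf on the elliptic curve $C$, and let $\mathcal M\subseteq\mathcal E_3$ be the image of the relative multiplication $\mathcal E_1\otimes_{\OO_C}\mathcal E_2\to\mathcal E_3$, so that at a general $z=a(F)$ the image of $\mathcal M\otimes\mathbb C(z)$ in $\mathcal E_3\otimes\mathbb C(z)\cong H^0(F,3H)$ is $\Image(m_{12})$. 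If $m_{12}$ is not surjective, then $\mathcal E_3/\mathcal M$ has positive rank, hence a nonzero locally free — so ample and $IT^0$ — quotient $\mathcal Q$; since $H^1(\mathcal E_3\otimes P)=0$ and $H^0(\mathcal Q\otimes P)\ne 0$ (Lemma~\ref{lemma 1}), a suitable section of $\mathcal E_3\otimes P$ has nonzero image in $\mathcal Q\otimes\mathbb C(z)$ for general $z$, and its restriction to $F$ lies in $V\setminus\Image(m_{12})$. If instead $m_{12}$ is surjective, then $\phi_{|3H|}$ factors through $\phi_{|H|}\times\phi_{|2H|}$, and its birationality (Corollary~\ref{Reider K0}) forces $\phi_{|H|}$ to distinguish $\iota$-conjugate points; then $H^0(F,H)$ contains a non-$\iota$-invariant section, which (as $\bigcup_Q$ of the restriction images of $H^0(X,L+a^*Q)$ spans $H^0(F,H)$ by Lemma~\ref{nonzero}) lifts to some $|L+a^*Q|$, and multiplying this lift by a section of $|2L+a^*(P-Q)|$ nonzero on $F$ yields a section of $V$ with the required property directly. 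In either case (b) holds, $\phi_V$ is birational, and in particular $|3L+a^*P|$ separates two general points of $F$.

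The main obstacle is step (b), specifically the assertion $V\not\subseteq\Image(m_{12})$; once this is reduced to the non-surjectivity of $m_{12}$, the remaining input is soft (the behaviour of ample and $IT^0$ sheaves on an elliptic curve, Lemmas~\ref{lemma 1} and~\ref{ample=IT0}), while the genuinely geometric ingredient is hypothesis~(ii) itself — it both guarantees that the lifted "new" section breaks the $\iota$-symmetry and, when $\Image(m_{12})$ is a proper subspace, that such sections are available; verifying~(ii) is where the classical description of the projective models of minimal surfaces with $K\equiv 0$ enters.
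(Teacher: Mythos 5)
Your overall strategy coincides with the paper's: part (a) is the paper's first reduction via Proposition \ref{separate1}(4), and your Case 1 is the paper's construction (push forward to $C$, take the image subsheaf $\FF\subseteq\mathcal{E}_3:=a_*\OO_X(3L)$ of the relative multiplication map, pass to a torsion-free quotient $\mathcal{Q}$, find a section of $\mathcal{Q}\otimes P$ nonvanishing at $z=a(F)$, and lift it to $H^0(X,\OO_X(3L)\otimes a^*P)$). However, the crucial lifting step is left unjustified as you wrote it: to get a section of $\mathcal{E}_3\otimes P$ with nonzero image in $\mathcal{Q}\otimes\CC(z)$ you need the map $H^0(C,\mathcal{E}_3\otimes P)\to H^0(C,\mathcal{Q}\otimes P)$ to be (at least) nonzero, and for that the relevant vanishing is $H^1(C,\FF\otimes P)=0$ (the torsion part of the kernel contributes nothing on a curve), not $H^1(C,\mathcal{E}_3\otimes P)=0$, which is what you cite and which gives nothing here. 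Supplying this is a genuine step of the paper's proof: $a_*\OO_X(L)$ and $a_*\OO_X(2L)$ are $IT^0$, hence ample by Lemma \ref{ample=IT0}(1), so their tensor product and its image $\FF$ are ample, hence $IT^0$. Note there is no shortcut through global generation either: an ample bundle on an elliptic curve need not be globally generated, so sections of $\mathcal{E}_3\otimes P$ need not surject onto the fibre at $z$. The gap is fixable with the lemmas you already invoke, but as written the key surjectivity is missing.

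Your Case 2 ($m_{12}$ surjective) is handled incorrectly, although it is in fact vacuous. Since $|2H|$ is complete and base point free, every member of $|2H|$ is $\phi_{|2H|}^*$ of a hyperplane and hence invariant under the covering involution $\iota$ (which is biregular, as you note); consequently every member $D\in|H|$ is $\iota$-invariant as well, because $2D\in|2H|$. Therefore $\phi_{|H|}$ can never distinguish $x$ from $\iota(x)$, and your pivotal claim that birationality of $|3H|$ ``forces $\phi_{|H|}$ to distinguish $\iota$-conjugate points'' is never true; pursued correctly, your own factorization argument shows instead that surjectivity of $m_{12}$ would contradict Corollary \ref{Reider K0}, i.e.\ only your Case 1 can occur --- which is precisely how the paper argues that the cokernel of $m_{12}$, and hence $\mathcal{Q}\otimes\CC(z)$, is nonzero. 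Separately, even granting a non-$\iota$-invariant section of $H^0(F,\OO_F(H))$, your lifting of it is a non sequitur: the fact that the union over $Q$ of the restriction images of $H^0(X,\OO_X(L)\otimes a^*Q)$ spans $H^0(F,\OO_F(H))$ does not imply that a prescribed section lies in one of those images. So the argument should be repaired by deleting Case 2 (replacing it with the observation that $m_{12}$ cannot be surjective) and by establishing that $\FF$ is $IT^0$ in Case 1.
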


Note that sections in  $H^0(F, \OO_F(2H))$ do not separate  two points lying in the same fiber of $\phi_{|2H|}$, hence the assumption of Lemma \ref{key lemma} means that  $\Image (m_{12})$ contains exactly the sections in $H^0(F, \OO_F(3H))$ that do not
separate  two  points lying in one general fiber of $\phi_{|2H|}$.
\begin{proof}
Since $|2H|$ separates  two   points on $F$ not lying in the same fiber of $\phi_{|2H|}$, by Proposition \ref{separate1}(4) and the argument in the proof of Corollary \ref{separate2}, $|3L+a^*P|$ separates  two  points on $F$ not lying in the same fiber of $\phi_{|2H|}$. We only need to find a section in  $H^0(X, \OO_X(3L)\otimes a^*P)$ that separates   two points lying in one general fiber of $\phi_{|2H|}$.

Consider the exact sequence
$$
0\to \FF \to a_*\OO_X(3L)\to \QQ\to 0
$$
where $\FF$ is the image of the multiplication morphism
$$
a_*\OO_X(L)\otimes a_*\OO_X(2L)\to a_*\OO_X(3L).
$$
Denote  $z=a(F)\in C$ a general point. Note that by definition, 
$
\QQ\otimes \CC(z)
$
is the cokernel of 
$$
m_{12}:H^0(F, \OO_F(H))\otimes H^0(F, \OO_F(2H))\to H^0(F, \OO_F(3H)),
$$
which is non-zero since $H^0(F, \OO_F(3H))$ gives a birational map but $\Image(m_{12})$ does not.
Hence $\QQ\neq 0$ and moreover, $\QQ$ is not torsion since $z$ is general. 
Consider the exact sequence
$$
0\to \TT\to \QQ\to  \QQ/ \TT\to 0,
$$
where $\TT$ is the torsion subsheaf of $\QQ$. Then $ \QQ/ \TT$ is a non-zero $IT^0$ vector bundle on $C$ by Lemma \ref{ample=IT0}(2). By Lemma \ref{lemma 1}, $H^0(C, (\QQ/ \TT)\otimes P)\neq 0$. Fix a non-zero section $\sigma_0\in H^0(C, (\QQ/ \TT)\otimes P)$, since 
$$
H^0(C, \QQ\otimes P)\to H^0(C, (\QQ/ \TT)\otimes P)
$$
is surjective, $\sigma_0$ lifts to $\sigma\in H^0(C, \QQ\otimes P)$.
Since $z$ is general and $(\QQ/ \TT)\otimes P$ is locally free, we may assume that $\sigma_0$ is not zero along $z$, hence $\sigma$ is not zero along $z$.

Since $a_*\OO_X(L)$ and $a_*\OO_X(2L)$ are $IT^0$ by Lemma \ref{nef big IT0}, they are ample by Lemma \ref{ample=IT0}(1). So $a_*\OO_X(L)\otimes a_*\OO_X(2L)$ and $\FF$ are ample and whence $IT^0$.  By taking cohomology,  it follows that 
\begin{align*}
H^0(X, \OO_X(3L)\otimes a^*P)\simeq H^0(C, a_*\OO_X(3L)\otimes P)\to H^0(C, \QQ\otimes P)
\end{align*}
is surjective. 
Hence $\sigma$ lifts to $\bar{\sigma}\in H^0(X, \OO_X(3L)\otimes a^*P)$.
Note that,  $\sigma$ is not zero along $z$ by construction, we have
$$
0\neq {\sigma}(z)\in \QQ\otimes P\otimes \CC(z)
$$
Hence $\bar{\sigma}(z)\in H^0(F, \OO_F(3H))\backslash \Image(m_{12})$, and separates  two   points lying in one general fiber of $\phi_{|2H|}$ by assumption. Hence  $|3L+a^*P|$ separates   two   points lying in one general fiber of $\phi_{|2H|}$.
\end{proof}

To apply Lemma \ref{key lemma}, it is important to check the assumption, which is not trivial. 
In the following 4 subsections, we will deal with 4 classes of surfaces with $K\equiv 0$.

\subsection{K3 surfaces}
 In this subsection, we consider K3 surfaces and prove the following proposition.
\begin{prop}\label{separate K3}
Let $F$ be a K3 surface and $H$  a nef and big divisor on $F$. Assume that  $|2H|$ does not give a birational map. Then  $|2H|$ gives a morphism $\phi_{|2H|}$ of degree $2$, and every section in 
$$H^0(F, \OO_F(3H))\backslash \Image (m_{12})$$
separates  two points lying in one general fiber of $\phi_{|2H|}$.
\end{prop}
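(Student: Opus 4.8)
The plan is to first determine the geometry of $\phi_{|2H|}$ from the classical theory of linear systems on K3 surfaces, and then to compare, on the resulting explicit projective model of $F$, the subspace $\Image(m_{12})$ with the sections of $3H$ that separate two points of a general fibre of $\phi_{|2H|}$.

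\emph{Step 1.} Since $H^2$ is an even integer $\geq 2$ we have $(2H)^2\geq 8$, so $|2H|$ is base point free and $|3H|$ is birational by Corollary~\ref{Reider K0}; in particular $\phi=\phi_{|2H|}$ is a morphism. As $|2H|$ is not birational, Saint-Donat's classification of base point free, non-birational (``hyperelliptic'') complete linear systems on K3 surfaces \cite{SD} shows that $\phi$ is generically $2:1$ onto its image $Y$, a surface of minimal degree $h^0(\OO_F(2H))-2$ in $\bP(H^0(\OO_F(2H))^\vee)$ — a rational normal scroll, a cone over a rational normal curve, or (exactly when $H^2=2$) the Veronese surface $v_2(\bP^2)$ — and that a general fibre of $\phi$ consists of two distinct reduced points. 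This is the first assertion.

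\emph{Step 2.} Let $\pi=\phi\colon F\to Y$ and let $\iota$ be the deck transformation; since a birational self-map of a K3 surface is biregular, $\iota$ is an honest involution of $F$ with $\pi\circ\iota=\pi$, and from $\iota^*|2H|=|2H|$ and torsion-freeness of $\Pic(F)$ one gets $\iota^*H\sim H$, so $\iota$ acts on $R=\bigoplus_{m\geq 0}H^0(\OO_F(mH))$. Here $H^0(\OO_F(2H))=\pi^*H^0(Y,\OO_Y(1))$, so every section of $2H$ has $\iota$-invariant zero divisor; moreover, in each of the non-birational cases $H^0(\OO_F(H))$ lies in a single $\iota$-eigenspace, the sections of $H$ all having $\iota$-invariant zero divisor for structural reasons (a distinguished $\iota$-fixed curve in the scroll and cone cases, a pulled-back line in the double-plane case). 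Hence $\Image(m_{12})$ lies in one eigenspace $H^0(\OO_F(3H))^{\varepsilon}$, and a short computation shows every section there separates no fibre of $\phi$ — which is the easy inclusion noted after Lemma~\ref{key lemma}. The content of the proposition is the reverse inclusion: a section separating no general fibre must actually lie in $\Image(m_{12})$; I would prove this by writing down $R$, together with the direct-sum decomposition of each $H^0(\OO_F(mH))$ induced by the branch datum of $\pi$, explicitly enough to read off both $\Image(m_{12})$ and the non-separating sections and to check they coincide.

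\emph{Step 3 (the main obstacle).} The work is the case-by-case verification over the possibilities for $Y$, using the classical descriptions of the projective models and their section rings (\cite{SD}, \cite{Reid}, and \cite{Cos}, \cite{D}, \cite{Oh} for the non-general K3's). In the scroll case — the generic non-birational situation, governed by an elliptic pencil $|E|$ with $H\cdot E=1$ together with an $\iota$-fixed section-like $(-2)$-curve $\Sigma$ — one identifies the members of $|3H|$ lying in $\Image(m_{12})$ as exactly those containing the distinguished $\iota$-fixed part, which is precisely the condition guaranteeing that they separate no general fibre; the cone case is the same bookkeeping with a correction at the vertex. I expect the genuinely delicate case to be $H^2=2$: there $Y=v_2(\bP^2)$, $\pi$ refines to the double plane $\phi_{|H|}\colon F\to\bP^2$ and $R=k[x_0,x_1,x_2,w]/(w^2-f_6)$ with $\deg w=3$, so the degree-$3$ generator $w$ lies outside $\Image(m_{12})$ while separating no fibre of $\phi$, and here the hypothesis of Lemma~\ref{key lemma} needs the most care, possibly via a separate direct lifting argument on $X$. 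Carrying out the list of cases, and disposing of this exception, is where the difficulty of the proof concentrates.
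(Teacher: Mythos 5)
Your overall strategy is the same as the paper's (reduce, via Saint-Donat's theory of hyperelliptic systems, to explicit projective models and then compare $\Image(m_{12})$ with the sections that fail to separate a general fibre of $\phi_{|2H|}$), but as written the proposal is a plan rather than a proof: the case-by-case verification, which is the entire content of the proposition, is deferred ("I would prove this by writing down $R$ \dots", "I expect \dots", "Carrying out the list of cases \dots"). In particular you never pin down the models. The paper's Proposition \ref{cases K3} shows that, besides the double plane ($H^2=2$, $|H|$ base point free), the only possibilities are $|H|=|(1+\tfrac d2)P+\Gamma|$ with $|P|$ an elliptic pencil, $P\cdot\Gamma=1$, and image $\bP(1,1,4)$ (when $d=2$) or $\bF_4$ embedded by $|(2+d)A+B|$ (when $d>2$); excluding all other scrolls and cones, and excluding "$|H|$ free with $H^2>2$", is a real step (using Saint-Donat, Reid and Nikulin), and only after it can one write down monomial bases of $H^0(F,\OO_F(mH))$ and read off the complement of $\Image(m_{12})$. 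Your assertion in the scroll case, that $\Image(m_{12})$ consists exactly of the members of $|3H|$ containing a distinguished fixed curve and that this is equivalent to non-separation, is likewise asserted, not proved.

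More seriously, the one case you do discuss concretely, $H^2=2$ with $|H|$ free, is handled incorrectly. There $R=k[x_0,x_1,x_2,w]/(w^2-f_6)$ with $\deg w=3$, the two points of a general fibre of $\phi_{|2H|}$ are $(x,\pm w_0)$ with $w_0\neq 0$, every element of $\Image(m_{12})$ (a cubic in the $x_i$) takes equal values at the pair, and $w$ takes the values $\pm w_0$; hence $w$, and every section $\tau+cw$ with $c\neq 0$, \emph{does} separate the two points in the sense used throughout the paper (its evaluation vector at the pair is independent of the line spanned by the evaluations of $\Image(m_{12})$, equivalently adjoining it to the pulled-back system separates the pair). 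You claim the opposite ("the degree-$3$ generator $w$ lies outside $\Image(m_{12})$ while separating no fibre of $\phi$") and conclude that the hypothesis of Lemma \ref{key lemma} may fail here and a separate lifting argument on $X$ is needed. This inverts the logic — apparently by reading "separates" as "vanishes at exactly one of the two points", a reading under which the proposition would be vacuous in every case — and misidentifies as a potential exception what is in fact the easiest case: $h^0(F,\OO_F(3H))=11$, $\dim\Image(m_{12})=10$, and birationality of $|3H|$ (Corollary \ref{Reider K0}) force the one-dimensional complement, hence every section outside $\Image(m_{12})$, to separate, exactly as in the paper's Case (1). So the proposal neither carries out the verification in the genuinely delicate cases nor treats correctly the case it does examine.
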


Before the proof, we need to understand the structure of $F$ in details in the case that $|2H|$ does not give a birational map. 

We recall some basic properties on Hirzebruch surfaces from \cite[Proposition 1.2]{Reid}. For a Hirzebruch surfaces
$\mathbb{F}_n=\mathbb{P}_{\mathbb{P}^1}(\mathcal{O}_{\mathbb{P}^1}\oplus\mathcal{O}_{
\mathbb{P}^1}(n))$, $n\geq  0$, denote $A$ a  fiber of $\mathbb{F}_n\to \bP^1$ and $B$ the $(-n)$-curve as a section of $\mathbb{F}_n\to \bP^1$.
For any $r \geq 0$, the linear system $|B + (n + r)A|$ is  base point free and defines a morphism
$\phi_{n;r} : \bF_n\to \bP^{n+2r+1}$.
Except for the case $n=r=0$, the image $\bF_{n;r}$ of $\phi_{n;r}$ is a surface of degree
$n+2r$. Moreover, $\bF_{n;0}\simeq \bP(1,1,n)$ and $\phi_{n;0} : \bF_n\to \bF_{n;0}$ is the contraction of  the curve $B$; $\phi_{n;r} : \bF_n\to  \bF_{n;r}$ is an isomorphism if $r>0$.

\begin{prop}\label{cases K3}
Let $F$ be a K3 surface and $H$  a nef and big divisor on $F$. Assume that  $|2H|$ does not give a birational map. Then  $|2H|$ gives a  morphism of degree $2$. More precisely,  one of the following holds:
\begin{enumerate}
\item $|H|$ is base point free, $H^2=2$, $|H|$ defines $\phi_{|H|}: F\to \bP^2$, and $|2H|$ defines $\phi_{|2H|}: F\to \bP^2\subset \bP^5$, where the last inclusion is Veronese embedding;

\item $|H|=|2P+\Gamma|$, $H^2=2$, $|2H|$ defines $\phi_{|2H|}: F\to \bP(1,1,4)\subset \bP^5$ where the last inclusion is defined by $\OO(4)$ on $\bP(1,1,4)$;

\item $|H|=|(1+\frac{d}{2})P+\Gamma|$, $H^2=d>2$, $|2H|$ defines $\phi_{|2H|}: F\to \bF_{4}\subset \bP^{1+2d}$  where the last inclusion is defined by $|(2+d)A+B|$ on $\bF_4$.
\end{enumerate}
Here $|P|$ is an elliptic pencil (i.e. a base point free linear system in which the general element is a smooth elliptic curve) and $\Gamma$ is a smooth rational curve with $P\cdot\Gamma=1$.
\end{prop}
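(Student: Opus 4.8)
The plan is to combine the Riemann--Roch formula on K3 surfaces with the classical analysis of linear systems on K3 surfaces due to Saint-Donat and Reid (\cite{SD}, \cite{Reid}), and then to make the morphism defined by $|2H|$ explicit using the description of linear systems on Hirzebruch surfaces recalled above. Since $mH$ is nef and big, Kawamata--Viehweg vanishing gives $h^i(F,\OO_F(mH))=0$ for $i>0$, so $h^0(F,\OO_F(mH))=2+\tfrac12 m^2H^2$; in particular $H^2$ is even and $H^2\ge 2$. Write $|H|=|M|+Z$ with $M$ the moving part and $Z$ the fixed part, so that $H^0(H)\cong H^0(M)$ and $\phi_{|H|}=\phi_{|M|}$; either $M^2>0$, in which case the image is a surface, or $M^2=0$, in which case $|M|=|kP|$ for an elliptic pencil $|P|$ and the image is a curve. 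The first observation is that $\phi_{|H|}$ cannot be birational onto a surface: if it were, the subsystem $\mathrm{Sym}^2 H^0(H)\subseteq H^0(2H)$ would already separate two general points of $F$, so $|2H|$ would be birational, contrary to hypothesis. By \cite{SD} it remains to treat two strata: (i) $\phi_{|H|}$ generically $2$-to-$1$ onto a surface of minimal degree (namely $\mathbb{P}^2$ if $H^2=2$, or a rational normal scroll, possibly a cone $\mathbb{P}(1,1,n)$, if $H^2\ge 4$); and (ii) $|H|=|kP|+Z$ with $|P|$ an elliptic pencil.

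\textbf{The surface stratum (i).} Let $\pi=\phi_{|H|}\colon F\to\Sigma$ be the $2$-to-$1$ morphism, with covering involution $\iota$ and $\pi_*\OO_F=\OO_\Sigma\oplus\mathcal{L}^{-1}$, where $2\mathcal{L}$ is the branch divisor. Then $H^0(2H)=H^0(\Sigma,\OO_\Sigma(2))\oplus H^0(\Sigma,\OO_\Sigma(2)\ot\mathcal{L}^{-1})$, and the $\iota$-anti-invariant summand consists of sections separating the two sheets of $\pi$. If $H^2\ge 4$ then $\Sigma$ is a scroll and $\mathcal{L}$ is small enough that $H^0(\Sigma,\OO_\Sigma(2)\ot\mathcal{L}^{-1})\neq 0$; combined with $H^0(\Sigma,\OO_\Sigma(2))$ separating points of $\Sigma$ this makes $|2H|$ birational, so this case is excluded. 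Hence $H^2=2$, $\Sigma=\mathbb{P}^2$, $\mathcal{L}=\OO_{\mathbb{P}^2}(3)$, and $H^0(\Sigma,\OO_\Sigma(2)\ot\mathcal{L}^{-1})=H^0(\OO_{\mathbb{P}^2}(-1))=0$, so $H^0(2H)=H^0(\mathbb{P}^2,\OO(2))$ and $\phi_{|2H|}$ is $\phi_{|H|}$ followed by the Veronese $\mathbb{P}^2\hookrightarrow\mathbb{P}^5$; since $(2H)^2=8=2\deg\Sigma$, it has degree $2$. This is case (1).

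\textbf{The curve stratum (ii).} Here $H=kP+Z$ with $|P|$ an elliptic pencil; comparing $h^0(H)=2+\tfrac12H^2$ with $h^0(kP)=k+1$ forces $k=1+\tfrac12H^2$, hence $H^2=(H^2+2)(P\cdot Z)+Z^2$. Using that $Z$ is the connected fixed part of a nef and big system --- each component a smooth rational curve, $Z^2<0$, and $H$ nef --- rules out $P\cdot Z=0$ and $P\cdot Z\ge 2$, forcing $P\cdot Z=1$, $Z^2=-2$; a minimality argument as in \cite{Reid} then pins $Z=\Gamma$ down to a single $(-2)$-curve with $P\cdot\Gamma=1$. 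Thus $H=(1+\tfrac{H^2}{2})P+\Gamma$, giving case (2) if $H^2=2$ and case (3) if $H^2>2$. In either subcase one computes $2H=(H^2+2)P+2\Gamma$, checks that $|2H|$ has no fixed component --- hence, since $(2H)^2\ge 8$, is base point free --- of projective dimension $1+2H^2$, and then, applying the description of $|(n+r)A+B|$ on $\mathbb{F}_n$ recalled above (with $n=4$, $r=H^2-2$), identifies the image: $\mathbb{P}(1,1,4)\subset\mathbb{P}^5$ embedded by $\OO(4)$ when $H^2=2$ (the $(-4)$-section being contracted), and $\mathbb{F}_4\subset\mathbb{P}^{1+2H^2}$ embedded by $|(H^2+2)A+B|$ when $H^2>2$. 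A degree count $(2H)^2=4H^2=2\deg\Sigma$ shows $\phi_{|2H|}$ has degree $2$.

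\textbf{Main obstacle.} The delicate part is the curve stratum: rigidly pinning $Z$ down to a single $(-2)$-curve, then determining the fixed part and base-point-freeness of $|2H|$ when $|H|$ itself has a fixed component, and recognising the image precisely --- in particular the appearance of the singular model $\mathbb{P}(1,1,4)$ (coming from the contraction $\mathbb{F}_4\to\mathbb{P}(1,1,4)$) rather than a smooth scroll when $H^2=2$. The double-cover cohomology computation needed to discard the hyperelliptic scroll stratum in (i) is comparatively routine.
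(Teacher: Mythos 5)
Your route is organized differently from the paper's: you stratify by the structure of $|H|$ (birational; hyperelliptic onto a minimal-degree surface; or moving part an elliptic pencil plus fixed part), whereas the paper applies Saint-Donat's theory directly to $|2H|$ (base point free by Reider, hyperelliptic of degree $2$ by hypothesis and \cite[4.1]{SD}), classifies its image by del Pezzo's theorem, and only afterwards analyses $|H|$ via Nikulin/Reid. Your stratum (i) is essentially workable, with small repairs: for $H^2\ge 4$ the minimal-degree image of $\phi_{|H|}$ can also be the Veronese surface (the case $H\sim 2B$ with $B$ of genus $2$), not only a scroll; the nonvanishing of $H^0(\Sigma,\OO_\Sigma(2)\ot\mathcal{L}^{-1})$ is not a vague smallness statement but the exact computation $\mathcal{L}\equiv -K$ of the (resolved) image, forced by $K_F\equiv 0$, which gives nonvanishing precisely when $H^2\ge 4$; and you tacitly assume $|H|$ has no fixed part when its moving part is big (true on a K3, but it needs the dimension-count/Hodge-index argument or a citation of \cite[2.7]{SD}).

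The genuine gap is in stratum (ii), which is where the actual content of conclusions (2) and (3) of Proposition \ref{cases K3} lies. From $|H|=|(1+\tfrac d2)P+\Gamma|$ you assert that ``applying the description of $|(n+r)A+B|$ on $\bF_n$ \dots identifies the image'' as $\bF_4$ (resp.\ $\bP(1,1,4)$) and that $\phi_{|2H|}$ has degree $2$; but nothing you write pins down $n=4$, nor even that $\phi_{|2H|}$ factors through a double cover of some $\bF_n$, nor that distinct members of $|P|$ have distinct images (needed for degree exactly $2$ rather than higher even degree). This is exactly the step the paper proves: knowing from \cite{SD} that $\phi_{|2H|}$ is $2{:}1$ onto a surface of degree $2d$ in $\bP^{1+2d}$, it factors the map as $\psi\colon F\to\bF_n$ with $\psi_*\OO_F=\OO_{\bF_n}\oplus\OO_{\bF_n}(-(n+2)A-2B)$ and $n\le 4$ by \cite[Corollary 2.4]{Reid}, shows $|\psi^*A|=|P|$ by intersecting with $H$, and then uses the projection formula to get $h^0(\bF_n,\OO_{\bF_n}((n+r-2-d)A+B))>0$, hence $n\ge 4$, so $n=4$, $r=d-2$. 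In your set-up you would need a substitute: e.g.\ show each member of $|P|$ maps $2{:}1$ onto a line (an elliptic curve cannot map birationally onto a rational curve), that $\Gamma$ maps onto a section of degree $2H\cdot\Gamma=d-2$ and distinct fibres to distinct rulings, and then still bound the scroll invariant from above (Reid's $n\le 4$, or an explicit basis of $H^0(F,\OO_F(2H))$ as in the paper's Proposition \ref{separate K3}). As written, the decisive identification --- which you yourself flag as the ``main obstacle'' --- is assumed rather than proved, so the proposal does not yet establish cases (2) and (3).
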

\begin{proof}
By Corollary \ref{Reider K0}, $|2H|$ is base point free. By \cite[Proposition 2.6]{SD}, a general member of $|2H|$ is irreducible. By \cite[4.1]{SD} and the assumption that $|2H|$ does not give a birational map, $|2H|$ gives a  morphism of degree $2$ and its image  $\phi_{|2H|}(F)$ has degree $2d$ in $\bP^{1+2d}$ where $d=H^2\geq 2$. In this case $|2H|$ is said to be {\it hyperelliptic}. 
By a theorem of del Pezzo (see \cite[Theorem 1.3]{Reid}), $\phi_{|2H|}(F)$ is either $\bP^2$, $\bP^2$ in its Veronese embedding, or one of the $\bF_{n;r}$. 

The first case does not happen because $1+2d> 2$.
For the second case, we have $\OO_F(2H)=\phi^*_{|2H|}(\OO_{\bP^2}(2))$, and $\OO_F(H)=\phi^*_{|2H|}(\OO_{\bP^2}(1))$ since there is no non-trivial torsion divisor on $F$. This is exactly statement (1) of the proposition.

Now we assume that $\phi_{|2H|}(F)=\bF_{n;r}\subset \bP^{1+2d}$ for some $n$ and  $r$, then $n+2r=2d$ by definition and $n \leq 4$ by \cite[Corollary 2.4]{Reid}.  Moreover, $\phi_{|2H|}$ factors through $\phi_{n;r}$ by a morphism $\psi: F\to \bF_{n}$, which is a double cover. We have $|2H|=|\psi^*((n+r)A+B)|$ and $\psi_*\OO_F=\OO_{\bF_{n}}\oplus \OO_{\bF_{n}}(-(n+2)A-2B)$ (cf. \cite[Section 2]{Reid}).

By \cite[Proposition 0.1]{Nikulin} or \cite[3.8 Theorem (d)]{ReidChapter} and its proof, either $|H|$ has no fixed part  or $|H|=|(1+\frac{d}{2})P+\Gamma|$ where $|P|$ is an elliptic pencil and $\Gamma$ is a smooth rational curve with $P\cdot \Gamma=1$.

If  $|H|$ has no fixed part,   then by \cite[Proposition 2.6, Corollary 3.2]{SD}, $|H|$ is base point free and a general member of $|H|$ is irreducible. Then \cite[3.9.6]{SD} implies that $H\cdot E>1$ for every irreducible curve $E$ satisfying $E^2=0$. On the other hand, by \cite[Proposition 5.6]{SD}, the second case in  \cite[Theorem 5.2]{SD} for the linear system $|2H|$ does not happen. Hence there exists an irreducible curve $E$ such that $E^2=0$ and $2H\cdot E=2$, which is a contradiction.

Hence $|H|=|(1+\frac{d}{2})P+\Gamma|$ where $|P|$ is an elliptic pencil and $\Gamma$ is a smooth rational curve with $P\cdot\Gamma=1$. Note that $|\psi^*A|$ is an elliptic pencil on $F$ since $|\psi^*A|$ is base point free and $h^0(F,\OO_F(\psi^*A))=2$. If $|\psi^*A|\neq |P|$, then $\psi^*A\cdot P\geq 1$ and $2H\cdot\psi^*A=((2+d)P+2\Gamma)\cdot\psi^*A\geq 2+d$. On the other hand, $2H\cdot\psi^*A=\psi^*B\cdot\psi^*A= 2$, a contradiction. Hence $|\psi^*A|= |P|$. We have 
$$(2+d)\psi^*A+2\Gamma\sim 2H\sim \psi^*((n+r)A+B).$$
Hence $h^0(F, \OO_F(\psi^*((n+r-2-d)A+B)))>0$. Note that by projection formula, we have 
\begin{align*}
{}&h^0(F, \OO_F(\psi^*((n+r-2-d)A+B)))\\
={}&h^0({\bF_{n}}, \OO_{\bF_{n}}((n+r-2-d)A+B))+h^0({\bF_{n}}, \OO_{\bF_{n}}((r-4-d)A-B))\\
={}&h^0({\bF_{n}}, \OO_{\bF_{n}}((n+r-2-d)A+B)).
\end{align*}
This implies that $n+r-2-d\geq 0$. Combining with $n+2r=2d$, this implies that $n\geq 4$. Hence $n=4$ and $r=d-2$. Then we get the statements (2) and (3) of the proposition.
\end{proof}

\begin{proof}[Proof of Proposition \ref{separate K3}]We discuss case by case.

In Case (1) of Proposition \ref{cases K3}, we know that $H^0(F, \OO_F(H))$ is generated by $\{x, y, z\}$ and $H^0(F, \OO_F(2H))$ is generated by $\{x^2, xy, xz, y^2, yz, z^2 \}$. Then $\Image(m_{12})\subset H^0(F, \OO_F(3H))$ is generated by 
$$\{x^i y^j z^k\mid i, j, k\geq 0, i+j+k=3\}$$ which is $10$-dimensional. Since $h^0(F, \OO_F(3H))=11$, there exists a section $\sigma$ such that $H^0(F, \OO_F(3H))$ is generated by $\Image(m_{12})$ and $\sigma$. Since $|3H|$ gives a birational map on $F$, $\sigma$ separates two  points lying in one general fiber of $\phi_{|2H|}$,  and so does every section in 
$H^0(F, \OO_F(3H))\backslash \Image (m_{12})$.

In Case (2) of Proposition \ref{cases K3}, since $|P|$ is an elliptic pencil, $H^0(F, \OO_F(P))$ is generated by $\{x, y\}$. Since $h^0(F, \OO_F(\Gamma))=1$,  $H^0(F, \OO_F(\Gamma))$ is generated by $\gamma$. Hence $H^0(F, \OO_F(H))$ contains $\{x^2\gamma, xy\gamma, y^2\gamma\}$ and it turns out to be a basis by dimension computation.  Moreover, $H^0(F, \OO_F(2H))$  contains $\{x^i y^j \gamma^2 \mid i, j\geq 0, i+j=4\}$ which is 5-dimensional. We can choose a section $z$ to complete a basis of  $H^0(F, \OO_F(2H))$. Then $\Image(m_{12})\subset H^0(F, \OO_F(3H))$ is generated by  $$\{x^i y^j \gamma^3, x^k y^l \gamma z \mid i, j,k,l\geq 0, i+j=6, k+l=2\}$$ which is 10-dimensional. Since $h^0(F, \OO_F(3H))=11$, there exists a section $\sigma$ such that $H^0(F, \OO_F(3H))$ is generated by $\Image(m_{12})$ and $\sigma$. Since $|3H|$ gives a birational map on $F$, $\sigma$ separates two  points lying in one general fiber of $\phi_{|2H|}$,  and so does every section in 
$H^0(F, \OO_F(3H))\backslash \Image (m_{12})$.

In Case (3) of Proposition \ref{cases K3}, set $|H'|=|2P+\Gamma|$, then $|H'|$ satisfies Case (2) of Proposition \ref{cases K3}. We have seen that $H^0(F, \OO_F(H'))$ is generated by $\{x^2\gamma, xy\gamma, y^2\gamma\}$, $H^0(F, \OO_F(2H'))$ is generated by $$\{x^i y^j \gamma^2, z \mid i, j\geq 0, i+j=4\},$$ and $H^0(F, \OO_F(3H))$ is generated by  $$\{x^i y^j \gamma^3, x^k y^l \gamma z, \sigma \mid i, j,k,l\geq 0, i+j=6, k+l=2\}.$$ Here $\sigma$ separates two  points lying in one general fiber of $\phi_{|2H'|}$. Note that $\phi_{|2H'|}$ is the composition of $\phi_{|2H|}$ with the contraction $\bF_4\to \bP(1,1,4)$ which contracts  the curve $B$. Hence $\sigma$ separates two  points lying in one general fiber of $\phi_{|2H|}$. Since $|3H|=|3H'+(\frac{3d}{2}-3)P|$ and  $H^0(F, \OO_F(P))$ is generated by $\{x, y\}$, $H^0(F, \OO_F(3H))$ contains 
\begin{align*}
\Big\{x^i y^j \gamma^3, x^k y^l \gamma z, x^m y^n \sigma\text{ } \Big |\text{ } i,  {}& j,k,l,m,n\geq 0, i+j=\frac{3d}{2}+3, \\
{}&k+l=\frac{3d}{2}-1, m+n=\frac{3d}{2}-3\Big\}.
\end{align*} 
This turns out to be a basis by dimension computation. By similar argument,  $\Image(m_{12})$ is generated by  
$$\Big\{x^i y^j \gamma^3, x^k y^l  \gamma z \text{ }\Big| \text{ } i, j,k,l\geq 0, i+j=\frac{3d}{2}+3, k+l=\frac{3d}{2}-1\Big\}.$$ 
In other words,  $H^0(F, \OO_F(3H))$ is generated by $\Image(m_{12})$ and $$\Big \{x^m y^n \sigma \text{ } \Big |\text{ }  m,n \geq 0,m+n=\frac{3d}{2}-3\Big \}.$$ Since $\sigma$ separates  two points lying in one general fiber of $\phi_{|2H|}$,  so does every section in 
$H^0(F, \OO_F(3H))\backslash \Image (m_{12})$.
\end{proof}

\subsection{Enriques surfaces}
 In this subsection, we consider Enriques surfaces and prove the following proposition.

\begin{prop}\label{separate Enriques}
Let $F$ be an Enriques surface and $H$   a nef and big divisor on $F$. Assume that  $|2H|$ does not give a birational map. Then  $|2H|$ gives a  morphism $\phi_{|2H|}$ of degree $2$, and every section in 
$$H^0(F, \OO_F(3H))\backslash \Image (m_{12})$$
separates  two points lying in one general fiber of $\phi_{|2H|}$.
\end{prop}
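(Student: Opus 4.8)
The plan is to follow the proof of Proposition~\ref{separate K3}: first determine the structure of the pair $(F,H)$ when $\phi_{|2H|}$ fails to be birational (the Enriques analog of Proposition~\ref{cases K3}), and then in each resulting case compute $\Image(m_{12})\subset H^0(\OO_F(3H))$ explicitly and conclude from the birationality of $|3H|$.

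First I would record the general facts: $K_F\equiv 0$ forces $H^2$ to be even and $\geq 2$, and by Corollary~\ref{Reider K0} the system $|2H|$ is base-point free and $|3H|$ gives a birational map. The structural input is the classification of projective models of Enriques surfaces of Cossec~\cite{Cos} (see also~\cite{D},~\cite{Oh}). Using Cossec's invariant $\phi(H)=\min\{H\cdot E\mid E^2=0,\ E>0\}$, realized by a half-fiber $P$ (so $P^2=0$, $h^0(\OO_F(P))=1$, and $|2P|$ is a genus-one pencil): since $\phi(2H)=2\phi(H)$, non-birationality of $|2H|$ forces $\phi(H)=1$, i.e. there is a half-fiber $P$ with $H\cdot P=1$; then $\phi_{|2H|}$ is generically $2$-to-$1$ onto its image — which, because $h^0(\OO_F(2H))=2H^2+1$, has degree one more than the minimal possible in its span, hence is a del Pezzo-type surface or a cone — and, after possibly replacing $P$ by $P+K_F$, one obtains $H\sim(H^2/2)P+G$ with $G:=H-(H^2/2)P$ satisfying $G^2=0$, $G\cdot P=1$. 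Analyzing $|G|$ (a half-fiber, possibly carrying a fixed $(-2)$-part) then yields a short list of cases, each with an explicit description of $|H|$, of $\phi_{|2H|}$, and of its image.

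In each case I would write down generators in low degree in terms of: bases $\{x_0,x_1\}$ of $H^0(\OO_F(2P))$ and, where needed, $\{y_0,y_1\}$ of $H^0(\OO_F(2G))$ (base-point-free pencils), the distinguished sections $s_P\in H^0(\OO_F(P))$, $s_G\in H^0(\OO_F(G))$ and a section cutting out any fixed $(-2)$-curve, and finitely many ``new'' sections in degrees $2H$ and $3H$ reflecting the double-cover structure — just as the sections $z$ and $\sigma$ enter cases (2)--(3) of Proposition~\ref{separate K3}. Using $h^0(\OO_F(mH))=\tfrac{m^2H^2}{2}+1$ (Riemann--Roch and Mumford vanishing; note the ``$+1$'' in place of the ``$+2$'' of the K3 case), a dimension count should show that $\Image(m_{12})$ is exactly the span of the degree-$3H$ monomials not involving the new degree-$3H$ section $\sigma$, so that $H^0(\OO_F(3H))=\Image(m_{12})\oplus\langle\,x_0^ix_1^j\sigma\,\rangle$. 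Since $|3H|$ is birational while $\Image(m_{12})$ factors through the degree-$2$ map $\phi_{|2H|}$ and so cannot separate two points of a general fiber, the complementary section $\sigma$ — and hence each $x_0^ix_1^j\sigma$, by base-point-freeness of $\{x_0,x_1\}$ — separates two general points lying in one fiber of $\phi_{|2H|}$; thus every section of $H^0(\OO_F(3H))\setminus\Image(m_{12})$ does too, which is the assertion. The few small cases ($H^2=2$, image a quartic del Pezzo surface or a cone) I would dispose of by hand, as cases (1)--(2) of Proposition~\ref{separate K3} are.

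The hard part is the structural step. The literature on linear systems on Enriques surfaces is most explicit for $\phi(D)\geq 2$, whereas here we are in the degenerate regime $\phi(H)=1$; the real work is to extract from~\cite{Cos,D,Oh} a clean list of the pairs $(F,H)$ with $|2H|$ non-birational — pinning down the image of $\phi_{|2H|}$, controlling the possible fixed $(-2)$-part of $|G|$, and choosing the correct half-fiber ($P$ versus $P+K_F$) so that the monomial bookkeeping is literally valid. Once the list is in hand, the verification of the hypothesis of Lemma~\ref{key lemma} is routine but, because two half-fiber pencils and the torsion class $K_F$ both intervene, more delicate than in the K3 case; in particular one must check in each case that the codimension of $\Image(m_{12})$ in $H^0(\OO_F(3H))$ equals exactly the number of genuinely new degree-$3H$ generators.
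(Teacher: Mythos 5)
Your overall strategy is the right one and matches the paper in spirit (classify the pairs $(F,H)$ with $|2H|$ non-birational, then compute $\Image(m_{12})$ explicitly and use a birational auxiliary system), but the proposal has a genuine gap: the structural step, which is the actual content of the proof, is deferred to ``extracting a clean list from \cite{Cos,D,Oh}'' and the shape you anticipate for that list is materially off. On an Enriques surface, non-birationality of $|2H|$ is far more rigid than in the K3 case: since $|2H|$ is base point free it is not hyperelliptic in Cossec's sense, and then \cite[Lemma 3.3.3, Proposition 5.2.1]{Cos} force $p_a(2H)=5$, i.e.\ $H^2=2$ always; moreover $|2H|=|2M|$ for an irreducible genus-two pencil $|M|$ (so $|H|=|M|$ or $|M+K_F|$), and $|M|$ falls into Cossec's special/non-special dichotomy (\cite[Lemma 5.2.7, Theorem 6.1]{Cos}). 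There is no analogue of case (3) of Proposition~\ref{cases K3} with unbounded $H^2$ to be handled by monomial bookkeeping over a half-pencil: the cases you planned to ``dispose of by hand'' are in fact the only cases, and your expected decomposition $H\sim(H^2/2)P+G$ with the image a scroll-type surface does not survive the analysis.

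The second missing idea is where the complement of $\Image(m_{12})$ comes from and why it separates. Your proposed splitting $H^0(\OO_F(3H))=\Image(m_{12})\oplus\langle x_0^ix_1^j\sigma\rangle$, with a single new section $\sigma$ in degree $3H$ detected by a dimension count and shown to separate ``because $|3H|$ is birational,'' is not what happens here: in all Enriques cases the complement is $2$-dimensional and is spanned by $\sigma\cdot H^0(F,\OO_F(H+K_F))$ (resp.\ $\sigma\cdot H^0(F,\OO_F(H))$, depending on whether $|H|=|M|$ or $|M+K_F|$), where $\sigma\in H^0(F,\OO_F(2H+K_F))$ and the crucial input is that $|2H+K_F|$ \emph{is} birational even though $|2H|$ is not (cf.\ the proof of \cite[Lemma 5.2.7]{Cos}), together with $|3H|=|(2H+K_F)+(H+K_F)|$. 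The torsion class $K_F$ is thus not a bookkeeping ``delicacy'' but the mechanism producing both the extra sections and their separating property; without identifying it, the dimension count you hope for cannot even be set up (you would not know the generators of $H^0(\OO_F(3H))$ outside $\Image(m_{12})$), and the claim that a general element of a $2$-dimensional complement separates the two points of a general fiber needs the explicit form $\sigma\cdot(\text{section of }|H+K_F|)$ rather than birationality of $|3H|$ alone. Once these two points are supplied, the remaining computation of bases and of $\Image(m_{12})$ (which is $8$-dimensional inside the $10$-dimensional $H^0(\OO_F(3H))$) proceeds essentially as you describe.
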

\begin{proof}
By Corollary \ref{Reider K0}, $|2H|$ is base point free. Hence a general member of $|2H|$ is irreducible by \cite[Proposition 1.5.2]{Cos}. Note that $p_a(2H)=1+2H^2\geq 5$. By \cite[Lemma 3.3.3]{Cos} and  base-point freeness of $|2H|$, $\phi_{|2H|}$ is a  morphism of degree 2. By base-point freeness of $|2H|$ again, $|2H|$ is not hyperelliptic (for definition, see \cite[Theorem 4.1]{Cos}). By \cite[Proposition 5.2.1]{Cos}, $p_a(2H)= 5$. By  \cite[Lemma 5.2.7]{Cos}, there exists an irreducible pencil of genus two (with two base point) $|M|$ such that $|2H|=|2M|$. This implies that $|H|=|M|$ or $|H|=|M+K_F|$ since the only non-trivial torsion divisor on $F$ is $K_F$. 
By \cite[Theorem 6.1]{Cos}, there are 2 types for $|M|$: non-special type and special type
(See \cite[Proposition 1.5.4, Definition 1.5.5]{Cos} for definition and property).

Recall that by an elliptic pencil $|P|$ on $F$, we always mean a base point free linear system in which the general element is an elliptic curve. For an elliptic pencil $|P|$ on $F$, there exist effective divisors $E$ and $E'$ such that  $|P|=|2E|=|2E'|$, $E' \in |E+K_F|$, and $2E, 2E'$ are the only multiple fibers of $|P|$ (cf. \cite[Proposition 1.6.3]{Cos}).

Firstly we consider the case that $|M|$ is non-special. Then by \cite[6.3.1]{Cos}, there exist two elliptic pencils $|2E_1|$ and $|2E_2|$ such that $|M|=|E_1+E_2|$. In this case $|M+K_F|=|E_1+E_2+K_F|=|E_1+E'_2|$ is of the same form as $|M|$. Hence we may assume that $|H|=|M|$. Since 
$$\dim H^0(F, \OO_F(E_i))=\dim H^0(F, \OO_F(E'_i))=1,$$
we write their generators  by $e_i$ and $e_i'$ respectively for $i=1,2$. Since $|2H|=|2E_1+2E_2|=|2E'_1+2E'_2|=|E_1+E_2+E'_1+E'_2|$, $H^0(F, \OO_F(2H))$ contains 
$$\{e^2_1e^2_2, e^{\prime 2}_1e^{\prime 2}_2, e^{2}_1e^{\prime 2}_2, e^{\prime 2}_1e^{2}_2, e_1e_2e'_1e'_2\}$$
and it turns out to be a basis by dimension computation.
For simplicity, we may write $x_1=e^2_1e^2_2$, $x_2=e^{\prime 2}_1e^{\prime 2}_2$, $x_3=e^{2}_1e^{\prime 2}_2$, $x_4=e^{\prime 2}_1e^{2}_2$, and $x_0=e_1e_2e'_1e'_2$. Then they satisfy equations   $x_0^2=x_1x_2$ and $x_0^2=x_3x_4$ which actually define $\phi_{|2H|}(F)\subset \bP^4$ (cf. \cite[1.4]{D}). For simplicity, we formally denote $\sqrt{x_1}=e_1e_2$ and define $\sqrt{x_i}$ similarly for $i=2,3,4$. 
Since $|H|=|E_1+E_2|=|E'_1+E'_2|$, $H^0(F, \OO_F(H))$ contains $\{\sqrt{x_1}, \sqrt{x_2}\}$ and this turns out to be a basis by dimension computation. Hence $\Image(m_{12})\subset H^0(F, \OO_F(3H))$ is generated by 
$$\{x_i\sqrt{x_1}, x_j\sqrt{x_2}\mid 0\leq i\leq 4, 2\leq j\leq 4\}$$
 which is $8$-dimensional (note that $x_0\sqrt{x_2}=x_2\sqrt{x_1}$ and $x_1\sqrt{x_2}=x_0\sqrt{x_1}$). On the other hand, recall that $|2H+K_F|$ gives a birational map on $F$ (cf. \cite[Proof of Lemma 5.2.7]{Cos}), there is a section $\sigma\in H^0(F, \OO_F(2H+K_F))$ which separates  two points lying in one general fiber of $\phi_{|2H|}$. Also by $|H+K_F|=|E_1+E'_2|=|E'_1+E_2|$,
 it follows that $H^0(F, \OO_F(H+K_F))$ is generated by $\{\sqrt{x_3}, \sqrt{x_4}\}$.
 Since $|3H|=|(2H+K_F)+(H+K_F)|$, $H^0(F, \OO_F(3H))$ contains $\{\sigma\sqrt{x_3}, \sigma\sqrt{x_4}\}$. Hence $H^0(F, \OO_F(3H))$ is generated by $\Image(m_{12})$ and  $\{\sigma\sqrt{x_3}, \sigma\sqrt{x_4}\}$ by dimension computation. Since $\sigma$ separates  two points lying in one general fiber of $\phi_{|2H|}$, so does every section in 
$H^0(F, \OO_F(3H))\backslash \Image (m_{12})$.

Finally we  consider the case that $|M|$ is special. By \cite[6.4.1]{Cos}, there exists an elliptic pencil $|P|=|2E|$ such that $|M|=|P+\Theta+K_F|$ where $\Theta$ is a smooth rational curve with $P.\Theta=2$. 
Since 
$$\dim H^0(F, \OO_F(E))=\dim H^0(F, \OO_F(E'))=\dim H^0(F, \OO_F(\Theta))=1,$$
we write their generators  by $e$, $e'$, and $\theta$ respectively. Then $H^0(F, \OO_F(P))$ is generated by $\{e^2, e^{\prime 2}\}$ and  $H^0(F, \OO_F(M+K_F))$ is generated by $\{e^2\theta,  e^{\prime 2}\theta\}$ since $|M+K_F|=|P+\Theta|$.  Since $|P+K_F|=|E+E'|$, $H^0(F, \OO_F(P+K_F))$ contains $ee'$ and hence  $H^0(F, \OO_F(M))$ contains $ee'\theta$. We may choose a section $\eta$ to complete a basis of  $H^0(F, \OO_F(M))$. Since $|2H|=|2M|=|2(M+K_F)|$, $H^0(F, \OO_F(2H))$ contains $\{e^4\theta^2, e^2 e^{\prime 2}\theta^2,  e^{\prime 4}\theta^2\}$ and $\{e^2 e^{\prime 2}\theta^2, ee'\theta\eta, \eta^2\}$ which sum up to a basis.
For simplicity, we write $x_0=e^2 e^{\prime 2}\theta^2$, $x_1=e^4\theta^2$, $x_2= e^{\prime 4}\theta^2$, $x_3=ee'\theta\eta$, $x_4=\eta^2$. Then they satisfy equations   $x_0^2=x_1x_2$ and $x_3^2=x_0x_4$   which actually define $\phi_{|2H|}(F)\subset \bP^4$ (cf. \cite[1.4]{D}).
We can formally define $\sqrt{x_0}=ee'\theta$ and define $\sqrt{x_i}$ for $i=1,2,4$ similarly. Rewrite what we have known, $H^0(F, \OO_F(M+K_F))$ is generated by $\{\sqrt{x_1}, \sqrt{x_2}\}$,  $H^0(F, \OO_F(M))$ is generated by $\{\sqrt{x_0}, \sqrt{x_4}\}$. On the other hand, recall that $|2H+K_F|$ gives a birational map on $F$ (cf. \cite[Proof of Lemma 5.2.7]{Cos}), there is a section $\sigma\in H^0(F, \OO_F(2H+K_F))$ which separates  two points lying in one general fiber of $\phi_{|2H|}$. 

If $|H|=|M+K_F|$, then $\Image(m_{12})\subset H^0(F, \OO_F(3H))$ is generated by $$\{x_i\sqrt{x_1}, x_j\sqrt{x_2}\mid 0\leq i\leq 4, 2\leq j\leq 4\}$$ which is $8$-dimensional. Note that $H^0(F, \OO_F(3H))$  contains $\{\sigma\sqrt{x_0}, \sigma\sqrt{x_4}\}$ since $|3H|=|(2H+K_F)+M|$.  Hence $H^0(F, \OO_F(3H))$ is generated by $\Image(m_{12})$ and  $\{\sigma\sqrt{x_0}, \sigma\sqrt{x_4}\}$ by dimension computation. 

If $|H|=|M|$, then $\Image(m_{12})\subset H^0(F, \OO_F(3H))$ is generated by $$\{x_i\sqrt{x_0}, x_j\sqrt{x_4}\mid 0\leq i\leq 4, j=1,2, 4\}$$ which is $8$-dimensional. Note that $H^0(F, \OO_F(3H))$  contains $\{\sigma\sqrt{x_1}, \sigma\sqrt{x_2}\}$ since $|3H|=|(2H+K_F)+(M+K_F)|$. Hence $H^0(F, \OO_F(3H))$ is generated by $\Image(m_{12})$ and  $\{\sigma\sqrt{x_1}, \sigma\sqrt{x_2}\}$ by dimension computation. 

Since $\sigma$ separates  two points lying in one general fiber of $\phi_{|2H|}$ in both cases, so does every section in 
$H^0(F, \OO_F(3H))\backslash \Image (m_{12})$.
\end{proof} 
\begin{remark}Lemma \ref{key lemma} and Proposition \ref{separate Enriques} imply that, if $F$ is an Enriques surface, then $|3L+a^*P|$ separates two general points on $F$ for all $P\in \Pic^0(C)$. In fact, there is a simpler proof: if $|2H|$ gives a birational map, then $|3L+a^*P|$ separates two general points on $F$ for all $P\in \Pic^0(C)$ by Corollary \ref{m4}(4); if $|2H|$ does not give a birational map, then $|2H+K_F|$  gives a birational map (cf. \cite[Proof of Lemma 5.2.7]{Cos}), hence   $|2L+K_X+a^*P|$ separates two general points on $F$ for general $P\in \Pic^0(C)$ by Corollary \ref{separate2}(4) and $|3L+a^*P|$ separates two general points on $F$ for all $P\in \Pic^0(C)$ by considering the map
$$
|2L+K_X+a^*P_1|+|L-K_X+a^*P_2|\to|3L+a^*(P_1+P_2)|
$$
for $P_1, P_2\in \Pic^0(C)$.
\end{remark}

\subsection{Abelian surfaces}
 In this subsection, we consider abelian surfaces and prove the following proposition.

\begin{prop}\label{separate ab}
Let $F$ be an abelian surface and $H$ a nef and big divisor on $F$ with $H^2\geq 4$. Assume that  $|2H|$ does not give a birational map. Then  $|2H|$ gives a finite morphism $\phi_{|2H|}$ of degree $2$, and every section in 
$$H^0(F, \OO_F(3H))\backslash \Image (m_{12})$$
separates  two  points lying in one general fiber of $\phi_{|2H|}$.
\end{prop}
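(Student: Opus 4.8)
The plan is to read the geometry of $F$ and of $\phi_{|2H|}$ off the classical theory of linear systems on abelian surfaces, then to identify $\Image(m_{12})$ with the space of sections of $\OO_F(3H)$ vanishing along a distinguished curve, and finally to deduce the separation statement by restricting to a general fibre of the resulting elliptic fibration.

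\emph{Step 1: structure of $F$ and $\phi_{|2H|}$.} First, $H$ is automatically ample: if an irreducible curve $C$ had $H\cdot C=0$, then $C^2=0$ (there are no negative curves on an abelian surface), so $H$ would lie in $C^\perp$, a lattice which is negative semidefinite modulo $\mathbb{R}C$, forcing $H^2\le 0$; hence $\phi_{|2H|}$ is a finite morphism. By Ohbuchi's theorem \cite{Oh}, non-birationality of $|2H|$ forces the existence of an elliptic curve $E\subset F$ with $H\cdot E=1$ and $\deg\phi_{|2H|}=2$; when $H^2\ge 4$, an easy Hodge index computation shows that such a class $E$ is unique, so the induced elliptic fibration $\pi\colon F\to B:=F/E$ (with $B$ an elliptic curve and every fibre $E_b\cong E$) is canonical. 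Since $H|_{E_b}$ has degree $1$, one gets $\OO_F(H)\cong\pi^*N\otimes\OO_F(\Sigma)$, where $N:=\pi_*\OO_F(H)$ has degree $d:=H^2/2\ge 2$ on $B$ and $\Sigma$ is an effective divisor which is a section of $\pi$ (so $\Sigma\cong B$, $\Sigma^2=0$ by adjunction, $\pi_*\OO_F(\Sigma)=\OO_B$) contained in $\Bs|H|$; in fact $|H|=\pi^*|N|+\Sigma$, with $\Sigma$ the fixed part. Writing $0'_b:=\Sigma\cap E_b$, the divisors $3H-E_b$ and $3H-\Sigma-E_b$ are nef and big (an elementary intersection computation, using $\Sigma^2=0$), so Kawamata--Viehweg vanishing shows that the restriction maps $H^0(F,\OO_F(kH))\to H^0(E_b,\OO_{E_b}(kH))$ for $k\le 3$ and $H^0(F,\OO_F(3H-\Sigma))\to H^0(E_b,\OO_{E_b}(3H-\Sigma))$ are surjective for every $b$. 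In particular $\phi_{|2H|}|_{E_b}$ is the complete morphism attached to $\OO_{E_b}(2\cdot 0'_b)$, i.e.\ the quotient of $E_b$ by the reflection at $0'_b$; hence the general fibre of $\phi_{|2H|}$ is a pair of distinct points lying on a single fibre $E_b$ of $\pi$.

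\emph{Step 2: $\Image(m_{12})$ is the space of cubics vanishing along $\Sigma$.} From $\OO_F(H)\cong\pi^*N\otimes\OO_F(\Sigma)$ and $\pi_*\OO_F(\Sigma)=\OO_B$ we get $H^0(F,\OO_F(H))=\sigma_\Sigma\cdot\pi^*H^0(B,N)$, where $\sigma_\Sigma$ cuts out $\Sigma$. Hence every element of $\Image(m_{12})$ is divisible by $\sigma_\Sigma$, so lies in $\sigma_\Sigma\cdot H^0(F,\OO_F(3H-\Sigma))$. Conversely, since $3H-\Sigma=2H+\pi^*N$ and (because $\deg N=d\ge 2$ and the relevant $H^1$ on $B$ vanishes) the multiplication $H^0(B,N)\otimes H^0(F,\OO_F(2H))\to H^0(F,\OO_F(3H-\Sigma))$ is surjective by the base-point-free pencil trick on the elliptic curve $B$, equality holds:
\[
\Image(m_{12})=\sigma_\Sigma\cdot H^0(F,\OO_F(3H-\Sigma))=\ker\bigl(H^0(F,\OO_F(3H))\to H^0(\Sigma,\OO_F(3H)|_\Sigma)\bigr),
\]
that is, $\Image(m_{12})$ consists exactly of the sections of $\OO_F(3H)$ vanishing along $\Sigma$.

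\emph{Step 3: separation.} Let $\sigma\in H^0(F,\OO_F(3H))\setminus\Image(m_{12})$; then $\sigma|_\Sigma\ne 0$, so for all $b$ outside a finite subset of $B$ the restriction $\sigma|_{E_b}$ is a section of $\OO_{E_b}(3\cdot 0'_b)$ not vanishing at $0'_b$. Fix such a $b$. By the surjectivity statements of Step 1, the image of $\Image(m_{12})$ in $H^0(E_b,\OO_{E_b}(3\cdot 0'_b))$ is precisely the hyperplane of sections vanishing at $0'_b$; adding $\sigma|_{E_b}$, which does not vanish at $0'_b$, the image of the subspace spanned by $\Image(m_{12})$ and $\sigma$ is all of $H^0(E_b,\OO_{E_b}(3\cdot 0'_b))$. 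As $\deg\OO_{E_b}(3\cdot 0'_b)=3$, this complete linear system is very ample on the elliptic curve $E_b$, hence separates every pair of distinct points of $E_b$. Therefore, for a general fibre $\{y,y'\}\subset E_b$ of $\phi_{|2H|}$ there is a section $s$ in the subspace spanned by $\Image(m_{12})$ and $\sigma$ with $s(y)=0\ne s(y')$; that is, the subspace spanned by $\Image(m_{12})$ and $\sigma$ separates two points lying in one general fibre of $\phi_{|2H|}$. This is the second assertion of the proposition (so that, by Lemma~\ref{key lemma}, $|3L+a^*P|$ separates two general points on $F$).

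\emph{Main difficulty.} The crux is Step 1: one must know, for an abelian surface with $H^2\ge 4$ on which $|2H|$ is not birational, that there is a unique elliptic curve $E$ with $H\cdot E=1$, that $\deg\phi_{|2H|}=2$ with the described fibrewise behaviour, and one must identify $\Sigma$ (the fixed part of $|H|$) as a section of $\pi$ with $\Sigma^2=0$. Once this structure is in place, Steps 2 and 3 are essentially formal: the description of $\Image(m_{12})$ reduces to the surjectivity of a multiplication map on an elliptic curve, and the separation to very-ampleness of a degree-$3$ line bundle on the fibre $E_b$.
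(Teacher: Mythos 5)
Your proposal is correct in substance, but it takes a genuinely different route from the paper. The paper applies Ohbuchi's theorem in the form of Theorem \ref{cases ab}: since $H^2\geq 4$, non-birationality of $|2H|$ forces $F\simeq C_1\times C_2$ with $H=D_1\times C_2+C_1\times D_2$, $\deg D_1=1$, $\deg D_2=H^2/2\geq 2$; the K\"unneth isomorphism $H^0(F,\OO_F(iH))\simeq H^0(C_1,\OO_{C_1}(iD_1))\otimes H^0(C_2,\OO_{C_2}(iD_2))$ together with the explicit bases $\{x\}$, $\{x^2,y\}$, $\{x^3,xy,\sigma\}$ on $C_1$ and surjectivity of multiplication on $C_2$ then identifies $\Image(m_{12})$ and gives the separation statement directly. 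You instead work intrinsically with the elliptic fibration $\pi\colon F\to B$ attached to an elliptic curve $E$ with $H\cdot E=1$ (your form of Ohbuchi's result is equivalent to, and in fact follows from, Theorem \ref{cases ab}, since in the product case $E=C_1\times\{\mathrm{pt}\}$ has $H\cdot E=1$; likewise your degree-$2$, fibrewise description of $\phi_{|2H|}$ can be read off the product picture), write $\OO_F(H)\simeq\pi^*N\otimes\OO_F(\Sigma)$ with $\Sigma$ the fixed-part section, identify $\Image(m_{12})$ with the sections of $\OO_F(3H)$ vanishing along $\Sigma$, and finish by restricting to a general fibre $E_b$ and using very ampleness of a degree-$3$ line bundle on an elliptic curve. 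This buys coordinate-freeness, at the price of the surjectivity in your Step 2, which is the one place where you assert rather than prove: the ``relevant $H^1$'' is $H^1(F,\OO_F(2H-\pi^*N))$, equivalently $H^1(B,N^{\vee}\otimes\pi_*\OO_F(2H))$, and its vanishing is not a formal consequence of $\deg N\geq 2$. It does hold, and can be supplied in one line: $2H-\pi^*N$ has positive square and positive intersection with $H$, and an abelian surface carries no curve of negative self-intersection, so $2H-\pi^*N$ is ample and Kodaira vanishing applies (alternatively, $N^{\vee}\otimes\pi_*\OO_F(2H)\simeq N\otimes\pi_*\OO_F(2\Sigma)$ is semistable of positive slope on $B$). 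With that line added your Steps 2--3 are complete, and your reading of ``separates'' (the span of $\Image(m_{12})$ and the given section separates the two points of a general fibre of $\phi_{|2H|}$) matches the way the proposition is used in Lemma \ref{key lemma}.
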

We remark that for a divisor on an abelian surface or a bielliptic surface, ampleness is equivalent to nef-and-bigness. The structure of $F$ on which $|2H|$ does not give a birational map is very clear by the following theorem.
\begin{thm}[{\cite[Theorem 1]{Oh}}]\label{cases ab}
Let $F$ be an abelian surface and $H$ an ample divisor on $F$. Assume that  $|2H|$ does not give a birational map. Then either $H^2=2$, or $F\simeq C_1\times C_2$ and $H=D_1\times C_2+C_1\times D_2$, where $C_i$ is an elliptic curve and $D_i$ is an ample divisor on $C_i$ for $i=1,2$, and $\deg D_1=1$.
\end{thm}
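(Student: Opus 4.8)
The plan is to reduce the entire statement to a single application of Reider's theorem (Theorem \ref{Reider thm}(2)) followed by the structure theory of complementary abelian subvarieties. Since $K_F\equiv 0$ on an abelian surface, $H^2$ is even and positive. If $H^2=2$ we are already in the first alternative, so from now on I would assume $H^2\geq 4$; then $(2H)^2=4H^2\geq 16\geq 10$, which is exactly the hypothesis needed to run Reider's theorem for the nef divisor $D=2H$.

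First I would exploit the failure of birationality. As $H$ is ample on an abelian variety, $2H$ is base point free, so $\phi_{|2H|}$ is a morphism; failing to be birational means there are two distinct points $p,q$ (which we may take general, lying in one fibre of $\phi_{|2H|}$) that are not separated by $|2H|=|K_F+2H|$. Reider's theorem then produces an effective divisor $E$ through $p,q$ realizing one of the three numerical types. The type $2H\cdot E=0$ is impossible because $2H$ is ample and $E\neq 0$ is effective; the type $2H\cdot E=1$ is impossible for the parity reason that $2H\cdot E=2(H\cdot E)$ is even (and moreover $E^2=-1<0$ cannot occur on an abelian surface, which carries no curve of negative self-intersection). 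Hence the only surviving type is $2H\cdot E=2$, $E^2=0$, i.e.\ $H\cdot E=1$ and $E^2=0$.

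Next I would upgrade $E$ to an elliptic curve. Since $H\cdot E=1$ and every irreducible component meets the ample divisor $H$ in degree $\geq 1$, the divisor $E$ must be irreducible of multiplicity one; as $E^2=0$ and $K_F\equiv 0$ we get $p_a(E)=1$, and since an abelian surface contains no rational curve, $E$ is a smooth elliptic curve. Translating to the origin, I obtain an elliptic subgroup $C_1\subset F$ with $H\cdot C_1=1$, so that $H|_{C_1}$ is a degree one, i.e.\ principal, polarization on $C_1$.

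The final and main step is to convert this into an honest product decomposition. Here I would invoke Poincar\'e reducibility in its sharp form: because the induced polarization $H|_{C_1}$ is principal, the norm endomorphism $N_{C_1}=\iota_1\circ\phi_{H|_{C_1}}^{-1}\circ\hat\iota_1\circ\phi_H$ is a genuine symmetric idempotent of $F$ (not merely an idempotent up to isogeny), so its complementary subvariety $C_2=(\ker N_{C_1})^0$ is an elliptic curve for which the addition map $C_1\times C_2\to F$ is an isomorphism. By the defining orthogonality of the complementary subvariety, $\phi_H$ is block diagonal with respect to this splitting, whence $H\equiv p_1^*(H|_{C_1})+p_2^*(H|_{C_2})$; writing $D_1=H|_{C_1}$ and $D_2=H|_{C_2}$ gives exactly $H=D_1\times C_2+C_1\times D_2$, with $\deg D_1=H\cdot C_1=1$ and $D_2$ of positive degree (hence ample) because $H$ is ample. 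I expect this last step to be the only delicate point: the intersection-theoretic input from Reider only shows that $F$ is \emph{isogenous} to a product, and the passage to an actual isomorphism, together with the vanishing of the correspondence (``diagonal'') part of $H$, is precisely what the principal-restriction criterion for complementary subvarieties supplies (see Birkenhake--Lange, \emph{Complex Abelian Varieties}); I would cite that criterion rather than reprove it.
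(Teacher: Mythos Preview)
The paper does not give its own proof of this statement: Theorem~\ref{cases ab} is simply quoted from Ohbuchi \cite[Theorem 1]{Oh} and used as a black box. So there is no ``paper's proof'' to compare against; you have supplied an argument where the paper supplies only a citation.

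Your argument is essentially correct. The Reider step is clean: with $H^2\geq 4$ one has $(2H)^2\geq 16$, the Lefschetz theorem gives base-point freeness of $|2H|$ (so $\phi_{|2H|}$ is a finite morphism, ruling out the degenerate ``image of lower dimension'' case), and the parity/positivity constraints on an abelian surface force the surviving Reider output $H\cdot E=1$, $E^2=0$, hence $E$ is a smooth elliptic curve. The passage from an elliptic subgroup $C_1$ with $H\cdot C_1=1$ to a product splitting $(F,H)\simeq (C_1,D_1)\times(C_2,D_2)$ is exactly the ``principal restriction $\Rightarrow$ polarized splitting'' criterion from the theory of complementary abelian subvarieties (Birkenhake--Lange, \emph{Complex Abelian Varieties}, \S5.3): since $\deg(H|_{C_1})=1$, the norm endomorphism is a genuine idempotent, and the off-diagonal blocks of $\phi_H$ vanish by the very construction of the complement, so $\mu^*H$ is numerically a box sum. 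One small point you should make explicit: the upgrade from numerical to linear equivalence $H\sim p_1^*D_1+p_2^*D_2$ uses that $\Pic^0(C_1\times C_2)=\Pic^0(C_1)\times\Pic^0(C_2)$, so the $\Pic^0$-ambiguity is absorbed into the choice of $D_1,D_2$.

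In short: the paper defers entirely to \cite{Oh}, while you give a self-contained route via Reider plus the complementary-subvariety splitting criterion. Your approach has the virtue of staying inside the toolkit already present in the paper (Reider's theorem is Theorem~\ref{Reider thm}), at the cost of importing one structural fact about polarized abelian varieties from Birkenhake--Lange.
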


\begin{proof}[Proof of Proposition \ref{separate ab}]By Theorem \ref{cases ab}, $F\simeq C_1\times C_2$ and $H=D_1\times C_2+C_1\times D_2$, where $C_i$ is an elliptic curve and $D_i$ is an ample divisor on $C_i$ for $i=1,2$, and $\deg D_1=1$, $\deg D_2=\frac{H^2}{2}\geq 2$. Then $|2H|$ defines a finite morphism $C_1\times C_2\to \bP^1\times C_2$ of degree two. Note that two points lie in one fiber of $\phi_{|2H|}$ if and only if they lie in one fiber of $C_1\to \bP^1$ defined by $|2D_1|=|2H|_{C_1}|$, here $C_1$ is viewed as a fiber of $C_1\times C_2\to C_2$. 

Now  $H^0(C_1, \OO_{C_1}(D_1))$ is generated by a section $x$, $H^0(C_1, \OO_{C_1}(2D_1))$ is generated by $\{x^2, y\}$, and $H^0(C_1, \OO_{C_1}(3D_1))$ is generated by $\{x^3, xy, \sigma\}$, where $\sigma$ separates two points lying in the same fiber of $C_1\to \bP^1$ defined by $|2D_1|$. Also the multiplication map of sections
$$
H^0(C_2, \OO_{C_2}(D_2))\otimes H^0(C_2, \OO_{C_2}(2D_2))\to H^0(C_2, \OO_{C_2}(3D_2))
$$
is surjective. Since 
$$
H^0(F, \OO_F(iH))\simeq H^0(C_1, \OO_{C_1}(iD_1))\otimes H^0(C_2, \OO_{C_2}(iD_2))
$$
for $i=1,2,3$, it is easy to see that $\Image(m_{12})\subset H^0(F, \OO_F(3H))$ is generated by $\{x^3, xy\}\otimes H^0(C_2, \OO_{C_2}(3D_2))$ and $ H^0(F, \OO_F(3H))$ is generated by $\Image(m_{12})$ and  $\sigma \otimes H^0(C_2, \OO_{C_2}(3D_2))$. 
Since $\sigma$ separates  two points lying in the same fiber of $C_1\to \bP^1$,  
$H^0(F, \OO_F(3H))\backslash \Image (m_{12})$ separates  two points lying in  one general fiber of $\phi_{|2H|}$.
\end{proof}

\subsection{Bielliptic surfaces}
In this subsection, we consider bielliptic surfaces. In fact, we could not apply Lemma \ref{key lemma} directly in this case. The key point is that we should replace $m_{12}$ in Lemma \ref{key lemma} by another multiplication map $m'_{12}$, see Proposition \ref{separate biell} and proof of Proposition \ref{key lemma biell}. We can still prove the following proposition. 

\begin{prop}\label{key lemma biell}
Keep the notation as the beginning of Section 5. Fix  $P\in \Pic^0(C)$ and fix a general fiber $F$, assume that $F$ is a bielliptic surface and $|2H|$ does not give a birational map.
Then $|3L+a^*P|$ separates  two  general points on $F$.
\end{prop}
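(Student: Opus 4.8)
The plan is to run the proof of Lemma \ref{key lemma} with $m_{12}$ replaced by a $K_F$-twisted multiplication map, using the structural input of Proposition \ref{separate biell}. As in Corollary \ref{Reider K0}, $|2H|$ is base point free and $|3H|$ is birational, so when $|2H|$ is not birational it defines a generically finite morphism $\phi_{|2H|}$, which has degree $2$ by Proposition \ref{separate biell}. Just as in Lemma \ref{key lemma}, Proposition \ref{separate1}(4) and the argument of Corollary \ref{separate2} already give that $|3L+a^*P|$ separates two general points on distinct fibres of $\phi_{|2H|}$; it remains to produce a single section of $H^0(X,\OO_X(3L)\otimes a^*P)$ separating two general points inside one fibre of $\phi_{|2H|}$.

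\textbf{The twisted lifting argument.} The subsheaf $\FF\subseteq a_*\OO_X(3L)$ generated by $a_*\OO_X(L)\otimes a_*\OO_X(2L)$ does not work here, because on a general fibre $\Image(m_{12})$ fails to contain all the sections of $|3H|$ that do not separate a fibre of $\phi_{|2H|}$; the remedy is to twist by $K_X$. Since $K_X\equiv 0$, the divisors $L+K_X$ and $2L-K_X$ are again nef and big, and by adjunction $K_X|_F=K_F$, so $(L+K_X)|_F=H+K_F$ and $(2L-K_X)|_F=2H-K_F$. Let $\FF'\subseteq a_*\OO_X(3L)$ be the image of
$$
a_*\OO_X(L+K_X)\otimes a_*\OO_X(2L-K_X)\longrightarrow a_*\OO_X(3L),
$$
whose fibre over a general $z=a(F)\in C$ is $\Image(m'_{12})$ for
$$
m'_{12}\colon H^0(F,\OO_F(H+K_F))\otimes H^0(F,\OO_F(2H-K_F))\to H^0(F,\OO_F(3H))
$$
(the exact twist being the one isolated in Proposition \ref{separate biell}). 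By Lemma \ref{nef big IT0} the two factors are $IT^0$, hence ample on the elliptic curve $C$ by Lemma \ref{ample=IT0}(1), so $\FF'$ is ample and $IT^0$; and since $a_*\OO_X(3L)$ is $IT^0$, so are $\QQ'=a_*\OO_X(3L)/\FF'$ and its torsion-free quotient $\QQ'/\TT'$ by Lemma \ref{ample=IT0}(2), the latter being a nonzero vector bundle because $m'_{12}$ is not surjective on a general fibre (Proposition \ref{separate biell}). Now the proof of Lemma \ref{key lemma} goes through unchanged: Lemma \ref{lemma 1} provides a section of $(\QQ'/\TT')\otimes P$ not vanishing at $z$, which lifts to $\QQ'\otimes P$ and then, via $H^1(C,\FF'\otimes P)=0$, to $\bar\sigma\in H^0(X,\OO_X(3L)\otimes a^*P)$ with $\bar\sigma|_F\in H^0(F,\OO_F(3H))\setminus\Image(m'_{12})$. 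By Proposition \ref{separate biell}, $\bar\sigma|_F$ separates two general points in a fibre of $\phi_{|2H|}$, completing the proof.

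\textbf{The main obstacle.} The heart of the matter is Proposition \ref{separate biell}: classifying the bielliptic surfaces $F$ and ample $H$ with $|2H|$ non-birational, showing $\deg\phi_{|2H|}=2$, and verifying that $\Image(m'_{12})$ is exactly the set of non-separating sections of $|3H|$ --- this last point being why the plain $m_{12}$ is inadequate and the $K_F$-twist is forced. I would do the verification in the style of the abelian and Enriques cases: write $F=(E_1\times E_2)/G$, exploit the fibrations $F\to E_1/G$ and $F\to E_2/G\cong\bP^1$ together with the torsion divisor $K_F$ (supported on the multiple fibres of the latter) to produce explicit bases of $H^0(F,\OO_F(H+K_F))$, $H^0(F,\OO_F(2H-K_F))$ and $H^0(F,\OO_F(3H))$ built from products of pulled-back pencil sections, single out one extra generator of $H^0(F,\OO_F(3H))$ coming from a birational subsystem (a $K_F$-twist of $|2H|$, or $|3H|$ restricted along the elliptic fibration) that separates the fibres of $\phi_{|2H|}$, and then check by a dimension count that $\Image(m'_{12})$ omits precisely its multiples. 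The delicate part is handling the various numerical types of bielliptic surfaces uniformly and determining which twist $H+tK_F$, $2H-tK_F$ makes the cokernel come out exactly right.
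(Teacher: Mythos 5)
Your lifting framework is exactly the paper's: replace the image of $a_*\OO_X(L)\otimes a_*\OO_X(2L)$ by a canonically twisted image subsheaf $\FF'\subseteq a_*\OO_X(3L)$, note that the factors are nef and big, hence $IT^0$ by Lemma \ref{nef big IT0} and ample on the elliptic curve $C$ by Lemma \ref{ample=IT0}(1), so that $H^1(C,\FF'\otimes P)=0$ and a section of $(\QQ'/\TT')\otimes P$ not vanishing at $z$ lifts to $H^0(X,\OO_X(3L)\otimes a^*P)$. That part is correct. The genuine gap is in the structural input you feed into it: your twisted multiplication map is the single twist $H^0(F,\OO_F(H+K_F))\otimes H^0(F,\OO_F(2H-K_F))\to H^0(F,\OO_F(3H))$, whereas the map actually needed (and the one in Proposition \ref{separate biell}) is the direct sum over \emph{all} torsion twists, $\bigoplus_{i=0}^{r-1}H^0(F,\OO_F(H+iK_F))\otimes H^0(F,\OO_F(2H-iK_F))\to H^0(F,\OO_F(3H))$, with $r\in\{2,3,4,6\}$ the order of $K_F$, and correspondingly $\FF'$ must be the image of $\bigoplus_{i=0}^{r-1}a_*\OO_X(L+iK_X)\otimes a_*\OO_X(2L-iK_X)$. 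With a single twist the key property ``every section outside the image separates a general fiber of $\phi_{|2H|}$'' fails: all products coming from any twist (including the untwisted $\Image(m_{12})$) restrict on the fiber $B_0$ of $\Phi:F\to A/G$ containing the two points to sections of $H|_{B_0}$ and $2H|_{B_0}$ (since $K_F|_{B_0}$ is trivial), hence are non-separating, but they are in general not contained in your single-twist image; the lifted section then lands outside your image yet may still fail to separate. Concretely, when $H^2=2$ one computes via the \'etale cover $\tau:A\times B\to F$ and $H^0(A\times B,j\tau^*H)\simeq\bigoplus_i H^0(F,jH+iK_F)$ that the non-separating subspace of $H^0(F,\OO_F(3H))$ has dimension $6$, while $h^0(H+K_F)\cdot h^0(2H-K_F)=1\cdot 4=4$, so no choice of a single twist $t$ can ``make the cokernel come out exactly right,'' contrary to the plan in your last paragraph; the sum over all $i$ is forced.

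Moreover, this structural statement is where essentially all the work of the proposition lies, and you only sketch it. One must first show (via Reider) that a general fiber of $\phi_{|2H|}$, and likewise of each $\phi_{|2H+iK_F|}$, is a fiber of $B_0\to\bP^1$ cut out by $|2H|_{B_0}|$ and is independent of $i$, and then prove that $\Image(\bar m_{12})\cap H^0(F,\OO_F(3H))$ equals the summed image, using the abelian-surface case (Proposition \ref{separate ab}) upstairs; none of this is carried out in your proposal, and the version you would try to prove (a single well-chosen twist) is false in the case $H^2=2$. So the argument as written does not establish the proposition; repaired with the full direct sum over $i=0,\dots,r-1$ it becomes the paper's proof.
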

We recall the classification of bielliptic surfaces.
\begin{thm}[{see \cite{Beauville}}] Given a bielliptic surface $F$, there exist two elliptic curves $A$, $B$, and an abelian group $G$ acting on $A$ and on $B$ such that:
\begin{enumerate}
\item $A/G$ is elliptic and $B/G \simeq \bP^1$;
\item $S\simeq (A \times B)/G$, where $G$ acts on $A \times B$ componentwisely.
\end{enumerate}
\end{thm}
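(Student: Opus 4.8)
The plan is to recover $F$ from its two natural elliptic fibrations, the key being to trivialise the Albanese fibration after a finite \'etale base change. Recall first that a bielliptic surface is a minimal smooth projective surface with $\kappa(F)=0$, $q(F)=1$, $p_g(F)=0$, $\chi(\OO_F)=0$, and torsion canonical class, so in particular $K_F\equiv 0$.

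First I would study the Albanese map. As $q(F)=1$, the Albanese variety $E_1:=\Alb(F)$ is an elliptic curve and $\alpha\colon F\to E_1$ is surjective with connected fibres; by adjunction and $K_F\equiv 0$ a fibre $B$ satisfies $\deg K_B=0$, so $\alpha$ is an elliptic fibration. I would then show $\alpha$ is smooth and isotrivial. Smoothness: the topological Euler number satisfies $e(F)=12\chi(\OO_F)=0$, and since every singular fibre contributes positively to $e(F)$, there are none; the canonical bundle formula over the genus-one base $E_1$, combined with $K_F\equiv 0$, then rules out multiple fibres as well. Isotriviality: were the modular $j$-invariant of the fibres nonconstant, one would get $\kappa(F)\ge 1$, contradicting $\kappa(F)=0$. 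Hence every fibre is isomorphic to a single elliptic curve $B$.

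The core step is the trivialisation. A smooth isotrivial elliptic fibration over $E_1$ with fibre $B$ and no multiple fibres is governed by its monodromy, a homomorphism $\rho\colon \pi_1(E_1)\to\mathrm{Aut}(B)$; composing with the projection $\mathrm{Aut}(B)\to\mathrm{Aut}_0(B)$ onto origin-fixing automorphisms and taking the image yields a finite group $G$, which is abelian because $\pi_1(E_1)$ is. I would let $A\to E_1$ be the connected \'etale Galois cover with group $G$ that kills this part of the monodromy; since $E_1$ is elliptic and the cover is \'etale, $A$ is again an elliptic curve, $G=\mathrm{Gal}(A/E_1)$ acts on $A$ by translations, and the pulled-back fibration $F\times_{E_1}A$ becomes the trivial bundle $A\times B$. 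Descending the trivialisation gives $F\simeq (A\times B)/G$ with $G$ acting diagonally, by translations on $A$ and by automorphisms on $B$.

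Finally I would verify (1). Because $G$ acts on $A$ by translations, the diagonal $G$-action on $A\times B$ is free, the quotient is a smooth surface, and $A/G\simeq E_1$ is elliptic. Moreover $q(F)=\dim\big(H^0(A,\Omega^1_A)\oplus H^0(B,\Omega^1_B)\big)^G$, and since translations act trivially on $H^0(A,\Omega^1_A)$ this summand contributes $1$; the constraint $q(F)=1$ then forces $H^0(B,\Omega^1_B)^G=0$, i.e. $G$ acts on $B$ with nontrivial origin-fixing part, which is precisely the condition $B/G\simeq\bP^1$. I expect the trivialisation to be the main obstacle: excluding singular and multiple fibres through the Euler-number and canonical bundle formulas, and turning isotriviality into a genuine free diagonal $G$-action by producing the \'etale cover $A\to E_1$ with $A$ elliptic, is the technical heart of the argument.
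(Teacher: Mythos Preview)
The paper does not supply a proof of this classification; it is quoted from Beauville's textbook and used as input for the subsequent analysis of bielliptic surfaces. So there is no in-paper argument to compare your proposal against.

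That said, your trivialisation step has a real gap. After the \'etale cover $A\to E_1$ that kills the image of the monodromy in $\mathrm{Aut}_0(B)$, the residual monodromy of $F\times_{E_1}A\to A$ lands in the translation subgroup $B\subset\mathrm{Aut}(B)$, so the pulled-back fibration is a priori only a $B$-torsor over $A$, not the trivial bundle. Your sentence ``the pulled-back fibration $F\times_{E_1}A$ becomes the trivial bundle $A\times B$'' is precisely the assertion that needs justification. Concretely, look at the non-cyclic entries of Table~1 (Types 2, 4, 6): there $G$ acts on $B$ with a nontrivial translation component, so your $G$---defined as the image in $\mathrm{Aut}_0(B)$---is strictly smaller than the actual deck group, and $F\times_{E_1}A$ is an abelian surface sitting in a genuinely nonsplit extension $0\to B\to F\times_{E_1}A\to A\to 0$ rather than a product.

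The classical route (Bagnera--de Franchis, as presented in Beauville) sidesteps this by first passing to the cyclic \'etale cover $\widetilde{F}\to F$ that trivialises the torsion class $K_F$: then $p_g(\widetilde{F})=1$, $\chi(\OO_{\widetilde{F}})=0$, hence $q(\widetilde{F})=2$ and $\widetilde{F}$ is an abelian surface. Poincar\'e complete reducibility produces an elliptic curve inside $\widetilde{F}$ complementary to the Albanese fibre, and a further isogeny realises a cover of $\widetilde{F}$ as a genuine product $A\times B$, the extra isogeny being absorbed into $G$. Your arguments for smoothness of the Albanese map (via $e(F)=0$) and for the absence of multiple fibres are fine; it is the product decomposition that does not follow from the $\mathrm{Aut}_0(B)$-monodromy alone.
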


Denote by $\Phi: S\to (A/G)$, $\Psi: S\to (B/G)$ the two natural projections. Since $A\to (A/G)$ is \'etale, all fibers of $\Phi$ are smooth. All smooth fibers of $\Phi$ (respectively of $\Psi$) are isomorphic to $B$ (resp., to $A$). We will  denote by $A$ or $B$ the class in $\Num(S)$ (the group of numerical classes of divisors on $S$) of a fiber of $\Psi$ or $\Phi$ respectively.
We list all the possibilities of $G$ and basis of $\Num(S)$ in the following table (see \cite{Serrano}).
\smallskip
{
\begin{center}
\begin{tabular}{lll}
\hline
Type & $G$ & Basis of $\Num(S)$\\
\hline
1 & $\ZZ_2$ & $\{(1/2)A, B\}$\\

2 & $\ZZ_2\times\ZZ_2$ & $\{(1/2)A, (1/2)B\}$\\

3 & $\ZZ_4$ & $\{(1/4)A, B\}$\\

4 & $\ZZ_4\times \ZZ_2$ & $\{(1/4)A, (1/2)B\}$\\

5 & $\ZZ_3$ & $\{(1/3)A, B\}$\\

6 & $\ZZ_3\times \ZZ_3$ & $\{(1/3)A, (1/3)B\}$\\

7 & $\ZZ_6$ & $\{(1/6)A, B\}$\\
\hline
\end{tabular}
\end{center}
\centerline{Table 1}}

First we prove the following proposition.
\begin{prop}\label{separate biell}
Let $F$ be a bielliptic surface and $H$ a nef and big divisor on $F$. Assume that  $|2H|$ does not give a birational map. Then  $|2H|$ gives a morphism $\phi_{|2H|}$ of degree $2$, and every section in 
$$H^0(F, \OO_F(3H))\backslash \Image (m'_{12})$$
separates  two  points lying in one general fiber of $\phi_{|2H|}$, where 
$m'_{12}$ is the multiplication map
$$
m'_{12}:\bigoplus_{i=0}^{r-1}H^0(F, \OO_{F}(H+iK_F))\otimes H^0(F,\OO_{F}(2H-iK_F))\to H^0(F, \OO_{F}(3H)).$$ 
Here $r$ is the global index of $K_F$, i.e, $r=\min\{m\in \ZZ_{>0}\mid mK_F\sim 0\}$.
\end{prop}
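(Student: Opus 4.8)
The plan is to follow the pattern of Propositions \ref{separate K3}, \ref{separate Enriques} and \ref{separate ab}: first pin down the geometry of $\phi_{|2H|}$ when it is not birational, then compute $\Image(m'_{12})\subseteq H^0(F,\OO_F(3H))$ explicitly. The new feature, and the reason $m_{12}$ must be replaced by $m'_{12}$, is the canonical cover, which is forced on us by the torsion of $K_F$. First I would record the easy reductions: by Corollary \ref{Reider K0} the system $|2H|$ is base point free, so $\phi_{|2H|}$ is a morphism; since on a bielliptic surface nef and big is the same as ample, it is even finite, and non-birationality means $\deg\phi_{|2H|}\ge 2$. Let $\pi\colon\widetilde F\to F$ be the canonical cover, the étale degree-$r$ cover attached to the torsion class $K_F$; by the Bagnera--de Franchis description $\widetilde F$ is an abelian surface and $\widetilde H:=\pi^*H$ is ample with $\widetilde H^2=rH^2\ge 2r\ge 4$. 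Since $\pi_*\OO_{\widetilde F}=\bigoplus_{i\in\ZZ/r}\OO_F(iK_F)$, we get
$$
H^0(\widetilde F,m\widetilde H)=\bigoplus_{i\in\ZZ/r}H^0(F,\OO_F(mH+iK_F))
$$
for all $m$, and the multiplication map $H^0(\widetilde F,\widetilde H)\otimes H^0(\widetilde F,2\widetilde H)\to H^0(\widetilde F,3\widetilde H)$ is graded by $\ZZ/r$; its component with target $H^0(F,\OO_F(3H))$ is exactly $m'_{12}$. Thus $\Image(m'_{12})$ is the $\ZZ/r$-trivial part of the image of the multiplication map on the abelian surface $\widetilde F$.

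The core step is to classify the pairs $(F,H)$, $F$ bielliptic, for which $|2H|$ is not birational (compare \cite{Serrano}). I would do this directly on $F$: restrict $H$ and $2H$ to the fibres of the two elliptic fibrations $\Phi\colon F\to E$ and $\Psi\colon F\to\bP^1$; use that $\phi_{|2H|}$ is finite, so it contracts neither fibration, together with the degree relation $\deg\phi_{|2H|}\cdot\deg\phi_{|2H|}(F)=(2H)^2=4H^2$ and the bound $\deg\phi_{|2H|}(F)\ge h^0(F,2H)-2=2H^2-2$ for a non-degenerate surface, which forces $\deg\phi_{|2H|}=2$ (the value $H^2=2$ needing a short separate argument). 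Running through the seven types of $G$ in Table~1, one finds in each surviving case that $H$ has a very restricted numerical class, that $\phi_{|2H|}$ factors as $F\xrightarrow{\,q\,}F_0\hookrightarrow\bP^N$ with $q$ a double cover of a smooth rational or elliptic ruled surface $F_0$, and that the covering involution $\iota$ of $q$ is induced on $F=\widetilde F/G$ by an involution $\widetilde\iota$ of $\widetilde F$; in particular $\deg\phi_{|2H|}=2$ and the two points of a general fibre of $\phi_{|2H|}$ are a general point $x$ and $\iota(x)$. Computing $\widetilde H$ explicitly in each case, one sees directly (this is also forced by Theorem \ref{cases ab}, its case $\widetilde H^2=2$ being excluded by $\widetilde H^2\ge 4$) that $\widetilde F\cong C_1\times C_2$ and $\widetilde H=D_1\times C_2+C_1\times D_2$ with $\deg D_1=1$, $\deg D_2=rH^2/2\ge 2$, that $G$ preserves the two projections of $\widetilde F=C_1\times C_2$ — it cannot interchange the fibration over $E$ with the one over $\bP^1$ — hence acts componentwise, and that $\widetilde\iota$ acts through the hyperelliptic involution of the degree-one factor $(C_1,2D_1)$.

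The computation then parallels the proof of Proposition \ref{separate ab}, with the $G$-action carried along. Write $H^0(C_1,D_1)=\langle x\rangle$, $H^0(C_1,2D_1)=\langle x^2,y\rangle$, $H^0(C_1,3D_1)=\langle x^3,xy,\tau\rangle$ with $\tau$ separating the two points of a general fibre of $C_1\to\bP^1$, and note $H^0(C_2,D_2)\otimes H^0(C_2,2D_2)\to H^0(C_2,3D_2)$ is surjective since $\deg D_2\ge 2$. Then the image of the multiplication map on $\widetilde F=C_1\times C_2$ equals $\langle x^3,xy\rangle\otimes H^0(C_2,3D_2)$, complemented in $H^0(\widetilde F,3\widetilde H)$ by $\tau\otimes H^0(C_2,3D_2)$, every nonzero element of which separates the two points of a general fibre of $\phi_{|2\widetilde H|}$. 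Since $G$ preserves each factor, it acts on each of the spaces $H^0(C_1,kD_1)$ and $H^0(C_2,kD_2)$ through characters (for suitable linearizations); taking $G$-invariants identifies $H^0(F,\OO_F(3H))$ with the trivial-character part of $H^0(\widetilde F,3\widetilde H)$, $\Image(m'_{12})$ with its intersection with $\langle x^3,xy\rangle\otimes H^0(C_2,3D_2)$, and hence the complement with a subspace of the span of $\tau\otimes H^0(C_2,3D_2)$; as $\iota$ is induced by $\widetilde\iota$, every section in $H^0(F,\OO_F(3H))\backslash\Image(m'_{12})$ separates the two points of a general fibre of $\phi_{|2H|}$, which is the assertion.

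I expect the main obstacle to be the classification together with the character bookkeeping it feeds: one must, for each of the seven types, list the numerical classes of $H$ with $|2H|$ non-birational and check that in every case the $G$-characters on $C_1$ and $C_2$ are compatible enough that the trivial-character part of $\tau\otimes H^0(C_2,3D_2)$ is nonzero and genuinely disjoint from $\langle x^3,xy\rangle\otimes H^0(C_2,3D_2)$. This eigenvalue phenomenon is precisely why $m_{12}$ is not enough: the separating section $\tau$ on $C_1$ sits in a nontrivial $G$-eigenspace, so its contributions to $H^0(F,\OO_F(3H))$ are produced by products $H^0(F,\OO_F(H+iK_F))\otimes H^0(F,\OO_F(2H-iK_F))$ with $i\ne0$, and in general not by $H^0(F,\OO_F(H))\otimes H^0(F,\OO_F(2H))$ alone.
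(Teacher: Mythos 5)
Your overall mechanism is the one the paper uses: pass to the canonical cover (which in the relevant cases is $A\times B$), use the character grading $H^0(\widetilde F,m\widetilde H)\simeq\bigoplus_{i}H^0(F,\OO_F(mH+iK_F))$ to identify $\Image(m'_{12})$ with the invariant part of the image of multiplication upstairs, run the $C_1\times C_2$ computation of Proposition \ref{separate ab}, and descend the separating section through the cover. The genuine gap is in the structure step that feeds all of this. Your degree count ($\deg\phi_{|2H|}\cdot\deg\phi_{|2H|}(F)=4H^2$ together with $\deg\phi_{|2H|}(F)\ge 2H^2-2$) only gives $\deg\phi_{|2H|}=2$ when $H^2\ge 4$, and you explicitly leave $H^2=2$ open; moreover, even granting degree $2$, it does not produce the facts your computation actually needs: that a non-separated pair always lies in a fibre of $\Phi\colon F\to A/G$, that $H\cdot B=1$ (equivalently $\deg D_1=1$ on the cover), that only the types with cyclic $G$ occur (so that the canonical cover is $A\times B$ with componentwise Galois action), and that the covering involution lifts to the hyperelliptic involution of the degree-one factor. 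You defer all of this to ``running through the seven types'' without indicating the mechanism, and the parenthetical appeal to Theorem \ref{cases ab} is not available as stated: it presupposes that $|2\widetilde H|$ is non-birational on the cover, which does not follow from non-birationality of $|2H|$ on $F$ (only the pulled-back subsystem is visibly non-birational).

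The paper's device for precisely this step is Reider's theorem (Theorem \ref{Reider thm} and \cite{Reider}): non-birationality of $|2H|$ yields a curve class $E$ with $E^2=0$ and $2H\cdot E=2$; on a bielliptic surface $E\equiv A$ or $E\equiv B$, and intersecting with $H\equiv\frac{m}{r}A+\frac{n}{s}B$ rules out $E\equiv A$ and forces $m=s=1$, hence only Types $1,3,5,7$ of Table~1, $H\equiv\frac1r A+nB$ and $H\cdot B=1$; then ampleness of $2H-B$ localizes any non-separated pair of $\phi_{|2H+iK_F|}$ inside a fibre $B_0$ of $\Phi$, in a fibre of the double cover given by $|2H|_{B_0}|$, uniformly in $i$ and with no separate treatment of $H^2=2$. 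With that input your plan coincides with the paper's proof (your $\deg D_1=1$, componentwise action and lifted hyperelliptic involution are exactly the data $\bar H\sim q^*D_1+p^*D_2$ extracted there); without it, the case analysis you postpone is the missing content rather than a routine verification.
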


\begin{proof}
By classification, we may assume that $F\simeq (A\times B)/G$ and  $H\equiv \frac{m}{r}A+\frac{n}{s}B$, where $m,n$ are positive integers and $\{\frac{1}{r}A,\frac{1}{s}B\}$ is the basis of $\Num(F)$. In this case $A\cdot B=|G|=rs$ with $r\geq 2$ and $r$ is also the global index of $K_F$. Since $|2H|$ is not birational, by Reider's theorem (Theorem \ref{Reider thm}), there exists a 
base point free pencil $E$ such that $2H\cdot  E=2$ (cf. \cite[Corollary 2]{Reider}). Since $E^2=0$, either $E\equiv A$ or  $E\equiv B$. If $E\equiv A$, then 
$$1=E\cdot H=A\cdot ( \frac{m}{r}A+\frac{n}{s}B)=rn\geq 2,$$ which is a contradiction. Hence $E\equiv B$ and 
$$1=E\cdot H=B\cdot ( \frac{m}{r}A+\frac{n}{s}B)=sm.$$
This implies that $m=s=1$ which means that we are now in Type $1,3,5,7$ of Table 1.
Hence $H\equiv \frac{1}{r}A+nB$ with $n=\frac{H^2}{2}\geq 1$. 

For $0\leq i \leq r-1$, we consider two points $x,y$ lying in one general fiber of $\phi_{|2H+iK_F|}$, by the above argument using Reider's theorem, $x,y$ must lie in a fiber $B_0$ of $\Phi: S\to (A/G)$. Since $2H-B\equiv \frac{2}{r}A+(2n-1)B$ is ample,
$$
H^0(F, \OO_F(2H+iK_F))\to H^0(B_0, \OO_{B_0}(2H+iK_F))
$$
is surjective and $x,y$ must lie in a fiber of $B_0\to \bP^1$ defined by $|(2H+iK_F)|_{B_0}|=|2H|_{B_0}|$.
Hence $\phi_{|2H+iK_F|}$ is a generically finite morphism of degree 2, and a general fiber of $\phi_{|2H+iK_F|}$ is exactly a fiber of $B_0\to \bP^1$ defined by $|2H|_{B_0}|$ where $B_0$ is some fiber of $\Phi: S\to (A/G)$. In particular, this general fiber is independent of $i$. 

Since $H\cdot  B=1$ and $h^0(F,\OO_F(H))>0$, we may assume that $H$ is effective and its $\Phi$-horizontal part $H^h$ is isomorphic to $A/G$. We may write $H^h\equiv  \frac{1}{r}A+n'B$ with $n'\leq n$. Since $g(H^h)=g(A/G)=0$, $n'=0$ and $H^h\equiv  \frac{1}{r}A$. We have $H=H^h +\Phi^*\OO_{A/G}(D)$ where $D$ is an effective divisor on $A/G$ with $\deg D=n$. Denote $\bar{F}=A\times B$, note that we have the following commutative diagram:
$$\xymatrix{
B\ar[d]&  \bar{F} \ar[l]_q \ar[d]^\tau \ar[r]^p & A    \ar[d]^\pi \\
 B/G& F   \ar[l]_\Psi\ar[r]^\Phi  & A/G.
  }
$$
It is easy to see that $\tau^*\OO_F(H)\simeq \OO_{\bar{F}}(\bar{H})$ where $\bar{H}\sim q^*\OO_B(D_1)+p^*\OO_A(D_2)$ where $D_1$ and $D_2$ are divisors on $B$ and $A$ respectively with $\deg D_1=1$ and $\deg D_2=rn\geq 2$.
We have a multiplication map
$$
\bar{m}_{12}:H^0(\bar{F}, \OO_{\bar{F}}(\bar{H}))\otimes H^0(\bar{F},  \OO_{\bar{F}}(2\bar{H}))\to H^0(\bar{F}, \OO_{\bar{F}}(3\bar{H})).
$$
By the proof of  Proposition \ref{separate ab}, every section in $H^0(\bar{F}, \OO_{\bar{F}}(3\bar{H}))\backslash \Image({\bar{m}_{12}})$ separates two points lying in the same fiber of $B\to \bP^1$ which is defined by $|2\bar{H}|_B|$, here $B$ is viewed as a general fiber of $p$. Note that by projection formula, for $j\geq 0$,
$$
H^0(\bar{F},\OO_{\bar{F}}(j\bar{H}))\simeq \bigoplus_{i=0}^{r-1}H^0(F, \OO_F(jH+iK_F)).
$$
Hence $\Image({\bar{m}_{12}})\cap H^0(F, \OO_F(3H)) =\Image (m'_{12})$. Hence every section in $H^0(F, \OO_{F}(3H))\backslash \Image({m'_{12}})$ separates two points lying in the same fiber of $B\to \bP^1$ which is defined by $|2H|_B|$, here $B$ is viewed as a general fiber of $\Phi$. 
Hence 
every section in 
$$H^0(F, \OO_F(3H))\backslash \Image (m'_{12})$$
separates  two  points lying in one general fiber of $\phi_{|2H|}$.
\end{proof}

The proof of Proposition \ref{key lemma biell} is almost the same with the proof of Lemma \ref{key lemma}.
\begin{proof}[Proof of Proposition \ref{key lemma biell}]
Since $|2H|$ separates  two   points on $F$ not lying in the same fiber of $\phi_{|2H|}$, by Proposition \ref{separate1}(4) and the argument in the proof of Corollary \ref{separate2}, $|3L+a^*P|$ separates  two  points on $F$ not lying in the same fiber of $\phi_{|2H|}$. We only need to find a section in  $H^0(X, \OO_X(3L)\otimes a^*P)$ that separates   two points lying in one general fiber of $\phi_{|2H|}$.

Consider the exact sequence
$$
0\to \FF' \to a_*\OO_X(3L)\to \QQ'\to 0
$$
where $\FF'$ is the image of the multiplication morphism
$$
\bigoplus_{i=0}^{r-1}a_*\OO_X(L+iK_X)\otimes a_*\OO_X(2L-iK_X)\to a_*\OO_X(3L).
$$
Denote  $z=a(F)\in C$ a general point. Note that by definition and adjunction formula, 
$
\QQ'\otimes \CC(z)
$
is the cokernel of 
$$
m'_{12}:\bigoplus_{i=0}^{r-1}H^0(F, \OO_{F}(H+iK_F))\otimes H^0(F,\OO_{F}(2H-iK_F))\to H^0(F, \OO_{F}(3H)),$$
which is non-zero since $H^0(F, \OO_F(3H))$ gives a birational map but $\Image(m'_{12})$ can not separate  two   points on $F$ lying in one general fiber of $\phi_{|2H|}$ by the proof of Proposition \ref{separate biell}.
Hence $\QQ'\neq 0$ and moreover, $\QQ'$ is not torsion since $z$ is general. 
Consider the exact sequence
$$
0\to \TT'\to \QQ'\to  \QQ'/ \TT'\to 0,
$$
where $\TT'$ is the torsion subsheaf of $\QQ'$. Then $ \QQ'/ \TT'$ is a non-zero $IT^0$ vector bundle on $C$ by Lemma \ref{ample=IT0}(2). By Lemma \ref{lemma 1}, $H^0(C, (\QQ'/ \TT')\otimes P)\neq 0$. Fix a non-zero section $\sigma_0\in H^0(C, (\QQ'/ \TT')\otimes P)$, since 
$$
H^0(C, \QQ'\otimes P)\to H^0(C, (\QQ'/ \TT')\otimes P)
$$
is surjective, $\sigma_0$ lifts to $\sigma\in H^0(C, \QQ'\otimes P)$.
Since $z$ is general and $(\QQ'/ \TT')\otimes P$ is locally free, we may assume that $\sigma_0$ is not zero along $z$, hence $\sigma$ is not zero along $z$.

Since $a_*\OO_X(L+iK_X)$ and $a_*\OO_X(2L-iK_X)$ are $IT^0$ by Lemma \ref{nef big IT0}, they are ample by Lemma \ref{ample=IT0}(1). So $\bigoplus_{i=0}^{r-1}a_*\OO_X(L+iK_X)\otimes a_*\OO_X(2L-iK_X)$ and $\FF'$ are ample and whence $IT^0$.  By taking cohomology,  it follows that 
\begin{align*}
H^0(X, \OO_X(3L)\otimes a^*P)\simeq H^0(C, a_*\OO_X(3L)\otimes P)\to H^0(C, \QQ'\otimes P)
\end{align*}
is surjective. 
Hence $\sigma$ lifts to $\bar{\sigma}\in H^0(X, \OO_X(3L)\otimes a^*P)$.
Note that,  $\sigma$ is not zero along $z$ by construction, we have
$$
0\neq {\sigma}(z)\in \QQ'\otimes P\otimes \CC(z)\simeq H^0(F, \QQ'|_F).
$$
Hence $\bar{\sigma}(z)\in H^0(F, \OO_F(3H))\backslash \Image(m'_{12})$, and separates  two   points lying in one general fiber of $\phi_{|2H|}$ by Proposition \ref{separate biell}. Hence  $|3L+a^*P|$ separates   two   points lying in one general fiber of $\phi_{|2H|}$.
\end{proof}

\subsection{Proof of Theorems \ref{fiber two} and \ref{main}}
\begin{proof}[Proof of Theorem \ref{fiber two}]
Recall that $a: X\to C=\Alb(X)$ is the Albanese map where $C$ is an elliptic curve. Fix $P\in \Pic^0(C)\simeq \Pic^0(X)$. By Corollary \ref{m4}(2), $|3L+a^*P|$ separates two general points on two different fibers of a. 

Fix a general fiber $F$ of $a$, which is a surface with $K_F\equiv 0$ and $H=L|_F$. 
If $|2H|$ gives a birational map, then by Corollary \ref{m4}(3), $|3L+a^*P|$ separates two general points on $F$; if $|2H|$ does not give a birational map, then $|3L+a^*P|$ also separates two general points on $F$ by Lemma \ref{key lemma}, Propositions \ref{separate K3}, \ref{separate Enriques}, \ref{separate ab}, and \ref{key lemma biell} unless $F$ is an abelian surface  with $H^2=2$. 

Now we consider the case that $F$ is an abelian surface   with $H^2=2$.
In this case the assumption of Lemma \ref{L-F nef} is satisfied. Hence
$a_*\OO_X(3L)\otimes P$ is generated by global sections. 
Hence 
\begin{align*}
H^0(X, \OO_X(3L)\otimes a^*P){}&\simeq H^0(C, a_*\OO_X(3L)\otimes P)\\
{}&\to a_*\OO_X(3L)\otimes P\otimes \CC(z)\simeq H^0(F, \OO_F(3H))
\end{align*}
is surjective where $z=a(F)\in C$. Since  $|3H|$ gives a birational map on $F$, $|3L+a^*P|$ separates two general points on $F$.



Hence we proved that $|3L+a^*P|$ separates two general points on $X$, and whence gives a birational map.
\end{proof}

Finally, Theorem \ref{main} follows from  Corollary \ref{m4}(1), Theorems \ref{fiber one} and \ref{fiber two} directly.

\section{Irregular $4$-folds with $K\equiv 0$}

In this section, we prove Theorem \ref{main2}. We prove the following general theorem.

\begin{thm}\label{main4}
Let $X$ be a smooth projective variety with $K_X\equiv 0$, $a : X \to  A=\Alb(X)$ the Albanese map, and $L$ an ample divisor on $X$. If $\dim X-\dim A\leq 3$, then
$|mL+P|$ gives a birational map for all $m\geq 5$ and all $P\in \Pic^0(X)$.
\end{thm}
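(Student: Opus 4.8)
\textbf{Proof plan for Theorem \ref{main4}.}
The plan is to reduce, by the now-familiar machinery of Section 3, to a statement about the fibers of the Albanese map and about surjectivity of restriction maps, and then to treat separately the cases where the relative dimension $f:=\dim X-\dim A$ equals $0,1,2,3$. By Lemma \ref{nef big IT0}, since $f\le 3$, the sheaf $a_*\OO_X(mL)$ is a nonzero locally free $IT^0$ sheaf on $A$ for every $m\ge 1$, so in particular $|mL+P|\ne\emptyset$ for all $P\in\Pic^0(X)$. The heart of the argument is to run the separation-of-points analysis of Proposition \ref{separate1} and Corollary \ref{separate2} with $L$ ample (not merely nef and big), which gives extra room: a general fiber $F$ is a smooth projective variety with $K_F\equiv 0$ and $H:=L|_F$ ample, so the known effective birationality results for such $F$ in dimension $\le 3$ apply with better constants.

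First I would recall the fiberwise input. For $f\le 2$, $|mL|_F|=|mH|$ is birational for $m\ge 3$ (Theorem \ref{23}(1)), so Corollary \ref{separate2}(1)(2)(3)(4) already gives that $|4L+P|$ is birational for all $P$ (this is Corollary \ref{m4}(1)); since $5\ge 4$ we are done in these cases. For $f=3$, $F$ is a Calabi--Yau type threefold with $K_F\equiv 0$ and $H$ ample; the point is that with $H$ ample one has $|mH|$ birational already for $m\ge 4$ — this should follow from the effective results underlying Theorem \ref{23}(3) together with ampleness (one can invoke the sharper statement for ample $H$ rather than the nef-and-big constant $10$), or from a direct Reider/vanishing argument on the threefold $F$. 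Granting ``$|4H|$ birational on $F$,'' Corollary \ref{separate2}(3) gives that $|4L+a^*P|$ separates two general points of a general fiber $F$ for general $P$, and Corollary \ref{separate2}(4) upgrades this to $|5L+a^*P|$ separating two general points on $F$ for all $P$; combined with Corollary \ref{separate2}(1)(2) (two general points on two different fibers), $|5L+P|$ separates two general points of $X$ for all $P$, hence is birational. I would also need to handle the base locus: for a general point $x\in F$ one has $x\notin\Bs|H|$, so by Proposition \ref{separate1}(1) and the multiplication-map argument in the proof of Corollary \ref{separate2}, $x\notin\Bs|mL+a^*P|$ for all $m\ge 2$ and all $P$, which is exactly what feeds Proposition \ref{separate1}(3)(4).

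The main obstacle I expect is the fiberwise birationality statement in the relative dimension $3$ case: proving that $|4H|$ (rather than $|10H|$) gives a birational map on a smooth projective threefold $F$ with $K_F\equiv 0$ and $H$ \emph{ample}. The nef-and-big bound $m\ge 10$ of Theorem \ref{23}(3) is not good enough to reach $m=5$ on $X$ through the ``$n\mapsto n+1$'' mechanism, so one must genuinely use ampleness of $H$: for instance, via Kawamata--Viehweg vanishing and an induction on a general member $F'\in|H|$ (a surface with $K_{F'}\equiv H|_{F'}$ ample, to which Reider's theorem applies) to show $|K_F+3H|=|3H|$ is birational, and then checking the Calabi--Yau subcases (abelian threefold, products with K3/abelian surfaces, the Calabi--Yau case proper) where one might need an ad hoc argument exactly as in the examples in the introduction. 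If that sharper fiberwise bound is only ``$|5H|$ birational,'' then one falls back to $|6L+P|$ on $X$, so some care with the exact constant on $F$ is essential; the expectation, consistent with the optimal example $C\times F$ with $F=(10)\subset\bP(1,1,1,2,5)$ where $|4H|$ fails, is that $|5H|$ is the right fiberwise bound and that the ``$+1$'' step is absorbed elsewhere (e.g.\ $|4H|$ birational fails but $|4H|$ still separates two general points, which is all Corollary \ref{separate2} actually needs). Once the fiberwise separation statement is pinned down, the globalization is routine and identical in spirit to the proof of Theorem \ref{fiber two}: lift sections using $IT^0$-ness and surjectivity of $H^0(X,\OO_X(5L)\otimes a^*P)\to H^0(F,\OO_F(5H))$, which holds because $a_*\OO_X(5L)$ is $IT^0$ and the relevant quotient is $IT^0$ as well.
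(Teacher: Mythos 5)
Your reduction for Albanese fiber dimension $\le 2$ and the general globalization framework (Proposition \ref{separate1}, Corollary \ref{separate2}, surjectivity via $IT^0$ sheaves) match the paper, but the heart of the theorem is exactly the step you leave open, and both of your proposed ways around it fail. First, ``$|4H|$ is birational on a threefold $F$ with $K_F\equiv 0$ and $H$ ample'' is false: the paper's own example $F=(10)\subset\bP(1,1,1,2,5)$, $H=\OO_F(1)$, has $\phi_{|4H|}$ generically finite of degree $2$, and by Oguiso's classification this and the case $h^0(F,\OO_F(H))=1$ are precisely the exceptions; so a Reider-type induction on a member of $|H|$ aimed at proving $|3H|$ or $|4H|$ birational cannot succeed (and $|H|$ may have only one member). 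Second, your fallback that ``$|4H|$ still separates two general points, which is all Corollary \ref{separate2} needs'' does not rescue the argument: to make $|5L+a^*P|$ birational you must separate pairs of points lying in a general fiber of the degree-$2$ map $\phi_{|4H|}$, and these are exactly the pairs that no section of $|4H|$ separates, so Proposition \ref{separate1}(4) and the $n\mapsto n+1$ mechanism of Corollary \ref{separate2}(4) give nothing for them; that mechanism only upgrades separation that already occurs fiberwise.

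What the paper does instead, and what is missing from your plan, is the following. After reducing to a general fiber $F$ of relative dimension $3$ on which $|4H|$ is not birational, it applies Theorem \ref{main} to conclude $q(F)=0$, hence $K_F\sim 0$, so $(F,H)$ is a polarized Calabi--Yau threefold, and then invokes Oguiso's classification: either (I) $F=(10)\subset\bP(1,1,1,2,5)$ with $H=\OO_F(1)$, or (II) $h^0(F,\OO_F(H))=1$. Case (II) is handled by Lemma \ref{L-F nef}: $a_*\OO_X(5L)\otimes P$ is globally generated, so $H^0(X,\OO_X(5L)\otimes a^*P)\to H^0(F,\OO_F(5H))$ is surjective and one uses that $|5H|$ is birational on $F$. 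Case (I) requires the multiplication-map lifting argument of Lemma \ref{key lemma}, run with
$$H^0(F,\OO_F(H))\otimes H^0(F,\OO_F(4H))\oplus H^0(F,\OO_F(2H))\otimes H^0(F,\OO_F(3H))\to H^0(F,\OO_F(5H)),$$
after checking explicitly that every section of $H^0(F,\OO_F(5H))$ outside the image of this map separates the two points of a general fiber of $\phi_{|4H|}$; here $q(F)=0$ together with the \'etale splitting of the Albanese bundle is what makes the image sheaf $\FF$ and the cokernel $\QQ$ ample locally free (hence $IT^0$) on $A$, so the argument works for arbitrary $\dim A$, not only for an elliptic curve. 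None of these ingredients (the $q(F)=0$ reduction via Theorem \ref{main}, Oguiso's classification, Lemma \ref{L-F nef} for case (II), and the $m_{1423}$-type lifting for case (I)) appear in your proposal, so as it stands the relative dimension $3$ case is a genuine gap.
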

\begin{proof}
Let $X$ be a smooth projective variety  with $K_X\equiv 0$ and $L$ an ample divisor on $X$. Take $a: X\to A=\Alb(X)$ be the Albanese map and fix $P\in \Pic^0(A)$. By Theorem \ref{main3}, we may assume that $\dim A=\dim X-3=1$. By Theorem \ref{main3}(1), it suffices to prove that $|5L+a^*P|$ gives a birational map. Fix a general fiber $F$ of $a$, which is a smooth projective $3$-fold with $K_X\equiv 0$, and denote $H:=L|_F$. By Corollary \ref{separate2}(2), it suffices to prove that $|5L+a^*P|$ separates two general points on $F$. By Corollary \ref{separate2}(4), we may assume that $|4H|$ does not give a birational map on $F$. Hence by Theorem \ref{main}, $q(F)=0$, which implies that $$h^3(\OO_F)=h^0(\OO_F)+h^2(\OO_F)>0$$ and hence $K_F\sim 0$. In other words,  $(F, H)$ is a polarized Calabi--Yau $3$-fold such that $|4H|$ does not give a birational map. As classified by Oguiso \cite[Theorem (1.1)]{O}, there are only two cases:
\begin{enumerate}
\item[(I)] $F=(10)\subset \bP(1,1,1,2,5)$  with $H=\OO_F(1)$, or
\item[(II)] $h^0(F, \OO_F(H))=1$.
\end{enumerate}

For the second case, the assumption of Lemma \ref{L-F nef} is satisfied. Hence
$a_*\OO_X(5L)\otimes P$ is generated by global sections and therefore 
\begin{align*}
H^0(X, \OO_X(5L)\otimes a^*P){}&\simeq H^0(A, a_*\OO_X(5L)\otimes P)\\
{}&\to a_*\OO_X(5L)\otimes P\otimes \CC(z)\simeq H^0(F, \OO_F(5H))
\end{align*}
is surjective where $z=a(F)\in A$. Since  $|5H|$ gives a birational map on $F$, $|5L+a^*P|$ separates two general points on $F$.

For the first case, it is easy to see that 
\begin{enumerate}
\item $|4H|$  gives a generically finite morphism $\phi_{|4H|}$ of degree $2$;
\item every section in 
$$H^0(F, \OO_F(5H))\backslash \Image (m_{1423})$$
separates  two points lying in one general fiber of $\phi_{|4H|}$,
here $m_{1423}$ is the multiplication map of sections
\begin{align*}
m_{1423}:H^0(F, \OO_F(H)){}&\otimes H^0(F, \OO_F(4H))\\\oplus H^0(F, {}&\OO_F(2H))\otimes H^0(F, \OO_F(3H)) \\ {}& \to H^0(F, \OO_F(5H)).
\end{align*}
\end{enumerate}
It  is enough to show that $|5L+a^*P|$ separates   two   points lying in one general fiber of $\phi_{|4H|}$.

Here we use the idea of proof of Lemma \ref{key lemma} or Proposition \ref{key lemma biell} again. Since $q(F)=0$ in this situation, the assumption that $A$ is an elliptic curve is no longer needed as in the proof of Lemma \ref{key lemma} or Proposition \ref{key lemma biell}, which was kindly pointed out to the author by the referee.

Consider the exact sequence
$$
0\to \FF \to a_*\OO_X(5L)\to \QQ\to 0
$$
where $\FF$ is the image of the multiplication morphism
\begin{align}\label{morphism 1423}
a_*\OO_X(L)\otimes a_*\OO_X(4L)\oplus a_*\OO_X(2L)\otimes a_*\OO_X(3L)\to a_*\OO_X(5L).
\end{align}
Denote  $z=a(F)\in A$ a general point. Note that by definition, 
$
\QQ\otimes \CC(z)
$
is the cokernel of 
$m_{1423}$,
which is non-zero since $H^0(F, \OO_F(5H))$ gives a birational map but $\Image(m_{1423})$ does not.
Hence $\QQ\neq 0$.

We claim that $\QQ$ and $\FF$ are ample locally free sheaves.
By Subsection \ref{subsection 2.1}, there is an \'etale covering $\pi: B\to A$ such that $X\times_A B\simeq F\times B$,
we have the following commutative diagram:
$$\xymatrix{
F\times B\ar[d]^\sigma \ar[r]^{p_B} & B   \ar[d]^\pi \\
 X   \ar[r]^a  & A.}
$$
Note that, by  $q(F)=0$,  $\sigma^*\OO_X(L)=\OO_F(L_1)\boxtimes \OO_B(L_2)$ where $L_1$ is an ample divisor on $F$ and $L_2$ an ample divisor on $B$.
Hence for any interger $m$, $$\pi^*a_*\OO_X(mL)={p_B}_*\sigma^*\OO_X(mL)=H^0(F, \OO_F(mL_1))\otimes \OO_B(mL_2).$$
Hence after pull-back by $\pi$, map (\ref{morphism 1423}) turns out to be a map between direct sums of $\OO_B(5L_2)$. Hence $\pi^*\QQ$ and $\pi^*\FF$ are also direct sums of $\OO_B(5L_2)$, which are ample locally free sheaves on $B$. Since $\pi$ is \'etale, $\QQ$ and $\FF$ are ample locally free sheaves on $A$, in particular, they are $IT^0$.

By Lemma \ref{lemma 1}, $H^0(A, \QQ \otimes P)\neq 0$. Fix a non-zero section $\sigma \in H^0(A, \QQ \otimes P)$.
Since $z$ is general and $\QQ\otimes P$ is locally free, we may assume that 
$\sigma$ is not zero along $z$.
Since $\FF$  is $IT^0$, by taking cohomology,  it follows that 
\begin{align*}
H^0(X, \OO_X(5L)\otimes a^*P)\simeq H^0(A, a_*\OO_X(5L)\otimes P)\to H^0(A, \QQ\otimes P)
\end{align*}
is surjective. 
Hence $\sigma$ lifts to $\bar{\sigma}\in H^0(X, \OO_X(5L)\otimes a^*P)$.
Note that,  $\sigma$ is not zero along $z$ by construction, we have
$$
0\neq {\sigma}(z)\in \QQ\otimes P\otimes \CC(z)
$$
Hence $\bar{\sigma}(z)\in H^0(F, \OO_F(5H))\backslash \Image(m_{1423})$, and separates  two   points lying in one general fiber of $\phi_{|4H|}$ by assumption. Hence  $|5L+a^*P|$ separates   two   points lying in one general fiber of $\phi_{|4H|}$.

We complete the proof.
\end{proof}

\end{document}